\newtheorem{thm}{Theorem}[section]
\newtheorem{cor}[thm]{Corollary}
\newtheorem{lem}[thm]{Lemma}
\newtheorem{exm}[thm]{Example}
\newtheorem{ques}[thm]{Question}
\newtheorem{conj}[thm]{Conjecture}
\newtheorem{prop}[thm]{Proposition}
\theoremstyle{definition}
\newtheorem{defn}[thm]{Definition}
\theoremstyle{remark}
\newtheorem{rem}[thm]{\bf Remark}
\numberwithin{equation}{section}
\begin{document}
\title[Pre-weight struct, pseudo-identities and canonical derived equiv]{Pre-weight structures, pseudo-identities and canonical derived equivalences}
\author[Xiao-Wu Chen] {Xiao-Wu Chen}

\subjclass[2010]{18G80, 18G20, 16E60}
\date{\today}
%\thanks{$^*$The corresponding author}
\keywords{pre-weight structure, pseudo-identity, derived equivalence, tilting complex, dg algebra}

\maketitle
\date{}%
\dedicatory{}%
\commby{}%
\begin{center}
\end{center}

\begin{abstract}
We introduce the notion of pre-weight structure on a triangulated category and study the corresponding pseudo-identities. We propose the notion of canonical derived equivalence between algebras that are not necessarily flat, which is associated to a tilting complex. In the flat situation, canonical derived equivalences coincide with standard derived equivalences in the sense of Rickard. We  prove that any derived equivalence starting from a hereditary algebra is canonical. The key tool is a general factorization theorem: any derived equivalence is uniquely factorized as a pseudo-identity followed by a canonical derived equivalence.
\end{abstract}

\section{Introduction}
Let $\mathbb{K}$ be a commutative ring, and let $A$ be an algebra over $\mathbb{K}$. Denote by $\mathbf{D}(A\mbox{-Mod})$ the derived category of unbounded complexes of left $A$-modules. For another algebra $B$, a linear triangle equivalence $F\colon \mathbf{D}(A\mbox{-Mod})\rightarrow \mathbf{D}(B\mbox{-Mod})$ is called a \emph{derived equivalence} between $A$ and $B$. It is a central object both in the derived Morita theory \cite{Ric89, Toen} and representation theory of algebras \cite{Zim, Xi}.

Assume that both $A$ and $B$ are flat over $\mathbb{K}$. The derived equivalence $F$ above is called \emph{standard} \cite{Ric91} if it is isomorphic to a derived tensor functor $X\otimes_A^\mathbb{L}-$ for a complex $X$ of $B$-$A$-bimodules. Such a complex $X$ is called a \emph{two-sided tilting complex}. We mention that two-sided tilting complexes are derived analogues of invertible bimodules in the classical Morita theory. They play a central role in the modular representation theory of finite groups \cite{Ric96, Rou}. Derived equivalences of Fourier-Mukai type \cite{Huy} between projective schemes are geometric analogues of standard derived equivalences between algebras, and Fourier-Mukai kernels correspond to two-sided tilting complexes. It is a well-known open question raised in \cite{Ric91} whether any derived equivalence between flat algebras is standard.

We are interested in derived equivalences between algebras that are not necessarily flat. One motivation is the study of derived equivalences \cite{Ric89} between general rings, or equivalently,  algebras over $\mathbb{Z}$ which are not necessarily flat. Another reason for this general study is the striking fact that derived equivalences do not preserve the flatness of the algebras in general. Such examples appear naturally in the tilting theory for hereditary orders \cite{KZ}; see also \cite[Subsection~6.7.1]{Zim} and \cite{ChenYP, Xi}.

It is well known that in this general setting, derived tensor products of complexes of bimodules are not well-behaved. Therefore, standard derived equivalences are not the right object to study. The fundamental problem is as follows: what kind of derived equivalences paly the role of standard derived equivalences in this general setting?

We propose the notion of \emph{canonical derived equivalence} between arbitrary algebras. To be more precise, let $A$ and $B$ be two algebras over $\mathbb{K}$ that are not necessarily flat. For a tilting complex $P$ over $B$ and an algebra isomorphism $\phi\colon A\rightarrow {\rm End}_{\mathbf{D}(B\mbox{-}{\rm Mod})}(P)^{\rm op}$, we consider the dg endomorphism algebra ${\rm End}_B(P)$,  and set $\Gamma=(\tau_{\leq 0}\; {\rm End}_B(P))^{\rm op}$ to be the opposite dg algebra of the truncated subalgebra of ${\rm End}_B(P)$. We have a surjective quasi-isomorphism $\Gamma \rightarrow A$ by composing $\phi^{-1}$ with the projection $\Gamma\rightarrow {\rm End}_{\mathbf{D}(B\mbox{-}{\rm Mod})}(P)^{\rm op}$.

We have the following derived equivalence associated to $(P, \phi)$.
$$\Psi_{(P, \phi)}\colon \mathbf{D}(A\mbox{-Mod}) \longrightarrow  \mathbf{D}(\Gamma)\xrightarrow{P\otimes^\mathbb{L}_\Gamma-} \mathbf{D}(B\mbox{-Mod})$$
Here, $\mathbf{D}(\Gamma)$ denotes the derived category \cite{Kel94} of left dg $\Gamma$-modules; the unnamed arrow above is induced by the surjective quasi-isomorphism $\Gamma\rightarrow A$ above. We call a derived equivalence \emph{canonical} if it is isomorphic to $\Psi_{(P, \phi)}$ for some pair $(P, \phi)$.

  We emphasize that the pair $(P, \phi)$ does not make $P$ into a complex of $B$-$A$-bimodules, since $A$ acts on $P$ only up to homotopy; compare \cite[Section~4]{CPS} and \cite{Kel00}. The construction of $\Psi_{(P,\phi)}$ is implicitly contained in \cite{Kel93} at least for  flat algebras.

The following theorem is inspired by \cite{Kel93} and  justifies this new notion to some extent; see Theorem~\ref{thm:stan=can}.
\vskip 5pt

\noindent {\bf Theorem A.}\; \emph{Let $A$ and $B$ be flat algebras. Then a derived equivalence $\mathbf{D}(A\mbox{-}{\rm Mod})\rightarrow \mathbf{D}(B\mbox{-}{\rm Mod})$ is standard if and only if it is canonical. }

\vskip 5pt

In view of the open question above and Theorem~A, we propose the following more general conjecture.

\vskip 5pt

\noindent {\bf Conjecture B.}\; \emph{Let $A$ and $B$ be arbitrary algebras. Then any derived equivalence $\mathbf{D}(A\mbox{-}{\rm Mod})\rightarrow \mathbf{D}(B\mbox{-}{\rm Mod})$ is canonical.}

\vskip 5pt

Recall that pseudo-identities on bounded homotopy categories and bounded derived categories are introduced in \cite{CY}. Such endofunctors are almost identical to the genuine identity functor. In this work, we study a \emph{pseudo-identity} on the \emph{unbounded} derived category.

The following general factorization theorem implies that the core of Conjecture~B is to understand pseudo-identities on unbounded derived categories; see Theorem~\ref{thm:factor}.  We mention that the idea of such a factorization is already implicit  in \cite{Ric89, Ric91}; compare  \cite{CY, CC}.
\vskip 5pt

\noindent {\bf Theorem C.}\;  \emph{Let $A$ and $B$ be arbitrary algebras. Then any derived equivalence $F\colon \mathbf{D}(A\mbox{-}{\rm Mod})\rightarrow \mathbf{D}(B\mbox{-}{\rm Mod})$ admits a unique factorization $F\simeq F_2F_1$ with $F_1$ a pseudo-identity on $\mathbf{D}(A\mbox{-}{\rm Mod})$ and $F_2\colon \mathbf{D}(A\mbox{-}{\rm Mod})\rightarrow \mathbf{D}(B\mbox{-}{\rm Mod})$ a canonical derived equivalence. }

\vskip 5pt

Using the factorization in Theorem~C, we confirm Conjecture~B  when the algebra $A$ is left hereditary; see Theorem~\ref{thm:hereditary+can}.
\vskip 5pt

\noindent {\bf Theorem D}. \; \emph{Let $A$ and $B$ be two algebras with $A$ being left hereditary. Then any derived equivalence $F\colon \mathbf{D}(A\mbox{-}{\rm Mod})\rightarrow \mathbf{D}(B\mbox{-}{\rm Mod})$ is canonical.}
\vskip 5pt

We observe that pseudo-identites \cite{CY} on the homotopy categories and derived categories rely on a certain inner structure of the categories in consideration. Inspired by this observation, we introduce the notion of \emph{pre-weight structure} on a general triangulated category, which contains weight structures in \cite{Bon} (also called co-t-structures in \cite{Pau}); see Section~\ref{sec:pws}. We introduce \emph{pseudo-identities} with respect to a given pre-weight structure on a triangulated category; see Section~\ref{sec:ps}. This general notion unifies pseudo-identities both on the  homotopy categories and  derived categories in \cite{CY}.

Let us describe the structure of the paper.

In Section~\ref{sec:pws}, we introduce the notion of pre-weight structure on a triangulated category and study its basic properties. In Section~\ref{sec:conv-dim}, we introduce convergence conditions and dimension for a pre-weight structure. The convergence conditions are used to establish a vanishing result in Proposition~\ref{prop:vanishing}. We prove that a pre-weight structure induces a semi-orthogonal decomposition of a certain Verdier quotient category; see Proposition~\ref{prop:semi-ortho}. In Section~\ref{sec:exm}, we study the canonical pre-weight structures both on the homotopy category and derived category of complexes.

In Section~\ref{sec:triv-act}, we give sufficient conditions on when a triangle endofunctor acts trivially on objects in Theorem~\ref{thm:trivial-action}. Under certain conditions, we prove in Theorem~\ref{thm:extend-ff} that a triangle functor is fully faithful if so is its restriction to the bounded part.

We introduce the notion of pseudo-identity on a triangulated category with respect to a pre-weight structure in Section~\ref{sec:ps}. Based on results in Section~\ref{sec:triv-act}, we give in Theorem~\ref{thm:pseudo} sufficient conditions on when a triangle endofunctor is isomorphic to a pseudo-identity. We prove that any pseudo-identity on the derived category of a hereditary abelian category is isomorphic to the genuine identity functor; see Proposition~\ref{prop:hereditary+ps}.

We study the dg endomorphism algebras of complexes in Section~\ref{sec:dg}. We introduce the notion of canonical derived equivalence and  prove Theorem~C in Section~\ref{sec:can}.  We give a criterion on when two canonical derived equivalences are isomorphic; see Theorem~\ref{thm:can}.  We prove Theorem~D which confirms Conjecture~B for   left hereditary algebras. In Section~\ref{sec:stan}, we prove Theorem~A. In Proposition~\ref{prop:hereditary+stan}, we prove that any derived equivalence between flat hereditary algebras is standard.

We  will denote by $\Sigma$ the suspension functor of any triangulated category. As usual, we abbreviate `differential graded' as `dg'. By default, a module means a left module. In the last three sections, we will work over a fixed commutative ring $\mathbb{K}$.

\section{Pre-weight structures}\label{sec:pws}

In this section, we introduce the notion of pre-weight structure on a general triangulated category. In Proposition~\ref{prop:tower}, we establish the existence of  left towers and right towers for certain objects. We refer to Section~\ref{sec:exm} for the motivating examples. In Section~\ref{sec:ps}, we will introduce pseudo-identities on a triangulated category equipped with a pre-weight structure.

Let $\mathcal{T}$ be a triangulated category with $\Sigma$ its suspension functor. Denote by $\Sigma^{-1}$ a quasi-inverse of $\Sigma$. Consequently, $\Sigma^n$ is defined for each integer $n$. Let $\mathcal{X}$  be a full subcategory of $\mathcal{T}$. We say that $\mathcal{X}$ is closed under $\Sigma$ (\emph{resp}. $\Sigma^{-1}$) if $\Sigma(\mathcal{X})\subseteq \mathcal{X}$ (\emph{resp}. $\Sigma^{-1}(\mathcal{X})\subseteq \mathcal{X}$). Denote by ${\rm tri}\langle \mathcal{X}\rangle$ the smallest triangulated subcategory of $\mathcal{T}$ containing $\mathcal{X}$.

Let $\mathcal{Y}$ be another full subcategory. We denote by $\mathcal{X}\ast\mathcal{Y}$ the full subcategory formed by those objects $Z$ which fit into an exact triangle $X\rightarrow Z\rightarrow Y \rightarrow \Sigma(X)$ for some $X\in \mathcal{X}$ and $Y\in \mathcal{Y}$. The full subcategory $\mathcal{X}$ is said to be closed under extensions if $\mathcal{X}\ast \mathcal{X}\subseteq \mathcal{X}$.

\begin{rem}\label{rem:ast}
By the octahedral axiom (TR4), the operation ``$\ast$" on full subcategories is associative; see \cite[Lemme~1.3.10]{BBD}. The following fact will be useful: given full subcategories $\mathcal{X}_0, \mathcal{X}_1, \cdots, \mathcal{X}_n$ of $\mathcal{T}$, an object $E$ lies in $\mathcal{X}_0\ast \mathcal{X}_1\ast \cdots \ast  \mathcal{X}_n$ if and only if there is a sequence of morphisms $$E_0\rightarrow E_1\rightarrow \cdots \rightarrow E_{n-1}\rightarrow E_{n}=E$$
such that $E_0\in \mathcal{X}_0$ and that the cone of each morphism $E_{i}\rightarrow E_{i+1}$ lies in $\mathcal{X}_{i+1}$. Moreover, we observe that each  $E_i$ in the sequence above belongs to $\mathcal{X}_0\ast \mathcal{X}_1\ast \cdots \ast \mathcal{X}_i$.
\end{rem}

\begin{defn}\label{defn:pws}
A pair $(\mathcal{U}_{\geq 0}, \mathcal{U}_{\leq 0})$ of full additive subcategories in $\mathcal{T}$  is called  a \emph{pre-weight structure} on $\mathcal{T}$, if the following conditions are satisfied:
\begin{enumerate}
\item $\mathcal{U}_{\geq 0}\ast \Sigma(\mathcal{U}_{\leq 0})=\mathcal{T}$;
\item $\mathcal{U}_{\geq 0}$ is closed under $\Sigma^{-1}$ and extensions;
\item $\mathcal{U}_{\leq  0}$ is closed under $\Sigma$ and extensions.
\end{enumerate}
The full subcategory $\mathcal{C}=\mathcal{U}_{\geq 0}\cap \mathcal{U}_{\leq 0}$ is called the \emph{core} of the pre-weight structure.
\end{defn}

We observe that a \emph{weight structure} in the sense of \cite[Definition~1.1.1]{Bon} is a pre-weight structure. This observation justifies our terminology to some extent. We mention that a weight structure is also called a co-t-structure in \cite[Definition~2.4]{Pau}.

For each integer $m$, we set $\mathcal{U}_{\geq m}=\Sigma^{-m}(\mathcal{U}_{\geq 0})$ and $\mathcal{U}_{\leq  m}=\Sigma^{-m}(\mathcal{U}_{\leq 0})$. Then Definition~\ref{defn:pws}(1) is rewritten as $$\mathcal{U}_{\geq 0}\ast \mathcal{U}_{\leq -1}=\mathcal{T}.$$ For $m\leq n$, we set $\mathcal{U}_{[m, n]}=\mathcal{U}_{\geq m}\cap \mathcal{U}_{\leq n}$.  In particular, we have $\mathcal{C}=\mathcal{U}_{[0, 0]}$. We set $\mathcal{U}_{+}=\bigcap_{m\in \mathbb{Z}} \mathcal{U}_{\geq m}$, $\mathcal{U}_{-}=\bigcap_{m\in \mathbb{Z}} \mathcal{U}_{\leq m}$  and $\mathcal{U}_b=\bigcup_{m\leq n}\mathcal{U}_{[m, n]}$. We observe that
$$\mathcal{U}_b=\mathcal{U}_{+}\cap \mathcal{U}_{-}.$$
 Since $\mathcal{U}_+$ is closed under $\Sigma^{\pm}$ and extensions, it is a triangulated subcategory of $\mathcal{T}$. Similarly, both $\mathcal{U}_{-}$ and $\mathcal{U}_b$ are triangulated subcategories of $\mathcal{T}$. The subcategory $\mathcal{U}_b$ might be viewed as the \emph{bounded part} of $\mathcal{T}$, or of the pre-weight structure.

\begin{lem}\label{lem:pws}
Let $(\mathcal{U}_{\geq 0}, \mathcal{U}_{\leq 0})$  be a pre-weight structure on $\mathcal{T}$ with core $\mathcal{C}$. Then the following statements hold.
\begin{enumerate}
\item $\mathcal{U}_{\geq m}=\mathcal{U}_{> n}\ast \mathcal{U}_{[m, n]}$ for any $m\leq n$.
\item $\mathcal{U}_{[m, p]}=\mathcal{U}_{[n+1, p]}\ast \mathcal{U}_{[m, n]}$ for any $m\leq n\leq p-1$.
\item $\mathcal{U}_{\leq  n}=\mathcal{U}_{[m, n]}\ast \mathcal{U}_{<m}$ for any $m\leq n$.
\item $\mathcal{U}_{[q, n]}=\mathcal{U}_{[m, n]}\ast \mathcal{U}_{[q, m-1]}$ for any $q+1\leq m\leq n$.
\item $\mathcal{U}_{[m, n]}=(\Sigma^{-n}\mathcal{C})\ast  (\Sigma^{1-n}\mathcal{C})\ast \cdots \ast (\Sigma^{-m}\mathcal{C})$ for any $m\leq n$.
\item ${\rm tri}\langle \mathcal{C} \rangle=\mathcal{U}_b$.
\end{enumerate}
\end{lem}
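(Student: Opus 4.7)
The plan is to derive all six statements from the fundamental decomposition $\mathcal{U}_{\geq 0}\ast\mathcal{U}_{\leq -1}=\mathcal{T}$ in Definition~\ref{defn:pws}(1), combined with the closure properties (2) and (3) in the same definition and the associativity of $\ast$ recorded in Remark~\ref{rem:ast}. Three preliminary observations drive everything: (a) applying $\Sigma^{-k}$ to axiom (1) yields $\mathcal{U}_{\geq k}\ast\mathcal{U}_{\leq k-1}=\mathcal{T}$ for every integer $k$; (b) closure of $\mathcal{U}_{\geq 0}$ under $\Sigma^{-1}$ gives the monotone chain $\cdots\subseteq\mathcal{U}_{\geq n+1}\subseteq\mathcal{U}_{\geq n}\subseteq\cdots$, and dually for $\mathcal{U}_{\leq n}$; (c) since $\Sigma$ is an auto-equivalence, each $\mathcal{U}_{\geq m}$, each $\mathcal{U}_{\leq n}$, and therefore each $\mathcal{U}_{[m,n]}$ inherits closure under extensions.

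For (1), I take $X\in\mathcal{U}_{\geq m}$ and use (a) at index $n+1$ to produce an exact triangle $A\to X\to B\to\Sigma(A)$ with $A\in\mathcal{U}_{\geq n+1}$ and $B\in\mathcal{U}_{\leq n}$. Rotating to $X\to B\to\Sigma(A)\to\Sigma(X)$ presents $B$ as an extension of $\Sigma(A)\in\mathcal{U}_{\geq n}\subseteq\mathcal{U}_{\geq m}$ by $X\in\mathcal{U}_{\geq m}$, so extension closure (c) forces $B\in\mathcal{U}_{[m,n]}$. The reverse inclusion is immediate from (b) and (c). Statement (3) is proved identically, starting from the dual decomposition and using the rotated triangle to put the $\mathcal{U}_{\leq m-1}$ piece into the correct range.

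For (2), I apply (1) to $X\in\mathcal{U}_{[m,p]}$ to obtain the same triangle $A\to X\to B\to\Sigma(A)$ with $A\in\mathcal{U}_{\geq n+1}$ and $B\in\mathcal{U}_{[m,n]}$; what remains is to show $A\in\mathcal{U}_{\leq p}$. Rotating the other way, $\Sigma^{-1}(B)\to A\to X\to B$ expresses $A$ as an extension of $X\in\mathcal{U}_{\leq p}$ by $\Sigma^{-1}(B)\in\mathcal{U}_{\leq n+1}\subseteq\mathcal{U}_{\leq p}$, and extension closure yields $A\in\mathcal{U}_{[n+1,p]}$. The reverse inclusion is routine. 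Statement (4) is entirely dual. Then (5) follows by a straightforward induction on $n-m$, iterating (2) in the form $\mathcal{U}_{[m,n]}=\mathcal{U}_{[n,n]}\ast\mathcal{U}_{[m,n-1]}$ and using $\mathcal{U}_{[k,k]}=\Sigma^{-k}\mathcal{C}$ together with the associativity of $\ast$.

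Finally, for (6), the inclusion $\mathcal{U}_b\subseteq{\rm tri}\langle\mathcal{C}\rangle$ is immediate from (5), since any triangulated subcategory containing $\mathcal{C}$ is closed under shifts and iterated extensions. The reverse inclusion follows because $\mathcal{U}_b=\mathcal{U}_{+}\cap\mathcal{U}_{-}$ is a triangulated subcategory of $\mathcal{T}$ (as already noted in the excerpt) and contains $\mathcal{C}$. The main bookkeeping obstacle throughout is getting the direction of the monotone inclusions right and verifying, at each decomposition step, that the newly produced object still lies in the prescribed bounded window; the associativity of $\ast$ from Remark~\ref{rem:ast} is what makes these verifications clean.
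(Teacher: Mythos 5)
Your proposal is correct and follows essentially the same route as the paper: decompose via the shifted axiom $\mathcal{U}_{\geq n+1}\ast\mathcal{U}_{\leq n}=\mathcal{T}$, use rotation plus extension-closure to trap the new piece in the required window, dualize for (3)--(4), and obtain (5)--(6) by induction on $n-m$ (the paper splits off $\Sigma^{-m}\mathcal{C}$ at the bottom rather than $\Sigma^{-n}\mathcal{C}$ at the top, an immaterial difference). One small remark: in your one-line sketch of (3) it is the $\mathcal{U}_{\geq m}$ piece, not the $\mathcal{U}_{\leq m-1}$ piece, that the rotated triangle must place into $\mathcal{U}_{[m,n]}$, exactly dual to what you do for (2).
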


\begin{proof}
For (1), we observe that $\mathcal{U}_{> n}\subseteq \mathcal{U}_{\geq m}$ and $\mathcal{U}_{[m, n]}\subseteq \mathcal{U}_{\geq m}$. Since $\mathcal{U}_{\geq m}$ is closed under extensions, we have
$\mathcal{U}_{> n}\ast \mathcal{U}_{[m, n]}\subseteq \mathcal{U}_{\geq m}$. Conversely, take any object $X\in \mathcal{U}_{\geq m}$. Applying $\Sigma^{-(n+1)}$ to Definition~\ref{defn:pws}(1), we obtain $\mathcal{U}_{>n}\ast \mathcal{U}_{\leq n}=\mathcal{T}$. It follows that there exists an exact triangle
$$X_1\longrightarrow X\longrightarrow X_2\longrightarrow \Sigma(X_1)$$
with $X_1\in \mathcal{U}_{>n}$ and $X_2\in \mathcal{U}_{\leq n}$. Both $X$ and $\Sigma(X_1)$ belong to $\mathcal{U}_{\geq m}$. Since $\mathcal{U}_{\geq m}$ is closed under extensions, we infer  $X_2\in \mathcal{U}_{\geq m}$. Therefore, $X_2$ belongs to $\mathcal{U}_{[m, n]}$. This proves that $X\in \mathcal{U}_{>n}\ast \mathcal{U}_{[m, n]}$, as required.

The proof of (2) is similar to the one of (1). Dually, we have (3) and (4).

For (5), we observe from (2) that $\mathcal{U}_{[m, n]}=\mathcal{U}_{[m+1, n]}\ast \Sigma^{-m}(\mathcal{C})$ for $m<n$. Then we deduce (5) by induction on $n-m$.

For (6), we recall that $\mathcal{U}_b$ is a triangulated subcategory of $\mathcal{T}$. Since $\mathcal{C}\subseteq \mathcal{U}_b$, it follows that ${\rm tri}\langle \mathcal{C} \rangle\subseteq \mathcal{U}_b$. On the other hand, we infer $\mathcal{U}_b=\bigcup_{m\leq n}\mathcal{U}_{[m, n]} \subseteq {\rm tri}\langle \mathcal{C}\rangle$ from (5).
\end{proof}

The following consequence follows immediately from Remark~\ref{rem:ast} and Lemma~\ref{lem:pws}(5).

\begin{cor}\label{cor:ast}
Let $X\in \mathcal{U}_{[-n, 0]}$ for some $n\geq 0$. Then there exists a sequence of morphisms
$$X_0\stackrel{\phi_0}\longrightarrow X_1\longrightarrow \cdots \longrightarrow X_{n-1} \stackrel{\phi_{n-1}}\longrightarrow X_{n}=X$$
such that each $X_i\in \mathcal{U}_{[-i, 0]}$ and that the cone of $\phi_i$ lies in $\Sigma^{i+1}(\mathcal{C})$. \hfill $\square$
\end{cor}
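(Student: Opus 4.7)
The plan is to observe that this is essentially a direct translation of Lemma~\ref{lem:pws}(5) via the unpacking of the $\ast$ operation recorded in Remark~\ref{rem:ast}, so the proof should be a two-step invocation with careful bookkeeping of indices.

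First I would apply Lemma~\ref{lem:pws}(5) with $m=-n$ and the upper bound equal to $0$, which yields
\[
\mathcal{U}_{[-n,0]}=\mathcal{C}\ast \Sigma(\mathcal{C})\ast \cdots \ast \Sigma^{n}(\mathcal{C}).
\]
Hence the hypothesis $X\in \mathcal{U}_{[-n,0]}$ places $X$ in a chain of $n+1$ extension products, whose $i$-th factor (counting from $0$) is exactly $\Sigma^{i}(\mathcal{C})$.

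Next I would feed this into Remark~\ref{rem:ast} with the choice $\mathcal{X}_{i}=\Sigma^{i}(\mathcal{C})$ for $0\leq i\leq n$. The remark then produces a sequence
\[
X_{0}\stackrel{\phi_{0}}\longrightarrow X_{1}\longrightarrow \cdots \longrightarrow X_{n-1}\stackrel{\phi_{n-1}}\longrightarrow X_{n}=X
\]
with $X_{0}\in \mathcal{C}$ and the cone of $\phi_{i}$ lying in $\Sigma^{i+1}(\mathcal{C})$, which is precisely the cone condition we want. The ``Moreover'' clause of Remark~\ref{rem:ast} gives $X_{i}\in \mathcal{X}_{0}\ast \mathcal{X}_{1}\ast\cdots \ast \mathcal{X}_{i}=\mathcal{C}\ast \Sigma(\mathcal{C})\ast \cdots \ast \Sigma^{i}(\mathcal{C})$.

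Finally I would apply Lemma~\ref{lem:pws}(5) once more, this time with $m=-i$ and upper bound $0$, which identifies $\mathcal{C}\ast \Sigma(\mathcal{C})\ast \cdots \ast \Sigma^{i}(\mathcal{C})$ with $\mathcal{U}_{[-i,0]}$; thus $X_{i}\in \mathcal{U}_{[-i,0]}$, completing the verification. There is essentially no obstacle here: the whole argument is formal unpacking, and the only point requiring care is the sign convention, namely that $\Sigma^{-(-i)}(\mathcal{C})=\Sigma^{i}(\mathcal{C})$ and that the factors in Lemma~\ref{lem:pws}(5) are listed in order of decreasing shift, matching the increasing shift sequence $\mathcal{C},\Sigma(\mathcal{C}),\ldots,\Sigma^{n}(\mathcal{C})$ obtained from $m=-n$.
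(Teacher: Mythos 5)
Your proposal is correct and is exactly the paper's argument: the paper states that the corollary ``follows immediately from Remark~\ref{rem:ast} and Lemma~\ref{lem:pws}(5)'', and your two-step unpacking (first $\mathcal{U}_{[-n,0]}=\mathcal{C}\ast\Sigma(\mathcal{C})\ast\cdots\ast\Sigma^{n}(\mathcal{C})$, then Remark~\ref{rem:ast} with $\mathcal{X}_i=\Sigma^i(\mathcal{C})$, then Lemma~\ref{lem:pws}(5) again to place each $X_i$ in $\mathcal{U}_{[-i,0]}$) is precisely what that citation amounts to. The only blemish is the phrase ``listed in order of decreasing shift'': the exponents $-n,1-n,\dots,-m$ in Lemma~\ref{lem:pws}(5) increase, but since you identified the correct factor sequence $\mathcal{C},\Sigma(\mathcal{C}),\dots,\Sigma^{n}(\mathcal{C})$ this has no effect on the argument.
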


There is another immediate consequence of Lemma~\ref{lem:pws}(5), which be will used later.

\begin{cor}\label{cor:vanishing}
Assume that the core $\mathcal{C}$ satisfies ${\rm Hom}_\mathcal{T}(\mathcal{C}, \Sigma^{-n}(\mathcal{C}))=0$ for any $n\geq 1$. Then we have ${\rm Hom}_{\mathcal{T}}(\mathcal{U}_{[-n,0]}, \mathcal{U}_{[1, m]})=0$ for any $n\geq 0$ and $m\geq 1$. \hfill $\square$
\end{cor}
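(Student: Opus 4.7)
The plan is to combine the decomposition in Lemma~\ref{lem:pws}(5) with the hypothesis, propagating Hom-vanishing through iterated extensions via long exact sequences.

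First, I would unfold the two subcategories explicitly using Lemma~\ref{lem:pws}(5):
\[
\mathcal{U}_{[-n,0]}=\mathcal{C}\ast\Sigma(\mathcal{C})\ast\cdots\ast\Sigma^{n}(\mathcal{C}), \qquad \mathcal{U}_{[1,m]}=\Sigma^{-m}(\mathcal{C})\ast\Sigma^{1-m}(\mathcal{C})\ast\cdots\ast\Sigma^{-1}(\mathcal{C}).
\]
Next, I would verify the pointwise vanishing on the building blocks: for $0\leq i\leq n$ and $1\leq j\leq m$, one has $\mathrm{Hom}_\mathcal{T}(\Sigma^i(\mathcal{C}),\Sigma^{-j}(\mathcal{C}))=\mathrm{Hom}_\mathcal{T}(\mathcal{C},\Sigma^{-(i+j)}(\mathcal{C}))=0$, since $i+j\geq 1$ and so the hypothesis applies.

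The main (and essentially only) step is to propagate this from the building blocks to the whole subcategories. For this I would prove, by a straightforward induction on the number of factors, the general fact that if $\mathrm{Hom}_\mathcal{T}(\mathcal{X}_i,\mathcal{Y}_j)=0$ for all $i,j$, then $\mathrm{Hom}_\mathcal{T}(\mathcal{X}_0\ast\cdots\ast\mathcal{X}_p,\mathcal{Y}_0\ast\cdots\ast\mathcal{Y}_q)=0$. Concretely, given $X\in\mathcal{X}_0\ast\cdots\ast\mathcal{X}_p$, I invoke Remark~\ref{rem:ast} to write $X$ as the terminal vertex of a sequence $X_0\to X_1\to\cdots\to X_p=X$ whose successive cones lie in the $\mathcal{X}_i$; then applying $\mathrm{Hom}_\mathcal{T}(-,Y)$ to each defining triangle and using the long exact sequence gives, by induction on $p$, that $\mathrm{Hom}_\mathcal{T}(X,Y)=0$ whenever $\mathrm{Hom}_\mathcal{T}(\mathcal{X}_i,Y)=0$ for all $i$. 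A symmetric argument using $\mathrm{Hom}_\mathcal{T}(X,-)$ against the corresponding sequence for $Y\in\mathcal{Y}_0\ast\cdots\ast\mathcal{Y}_q$ handles the second coordinate.

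I do not foresee any real obstacle: both ingredients (the decomposition of $\mathcal{U}_{[-n,0]}$ and $\mathcal{U}_{[1,m]}$ into shifts of the core, and the stability of Hom-vanishing under the $\ast$-operation via long exact sequences) are already packaged by Lemma~\ref{lem:pws} and Remark~\ref{rem:ast}. The only bookkeeping is keeping track of the ranges $0\leq i\leq n$ and $1\leq j\leq m$ so that $i+j\geq 1$, which is exactly what is needed to invoke the hypothesis on $\mathcal{C}$.
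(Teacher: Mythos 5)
Your proof is correct and follows exactly the route the paper intends: the paper states the corollary with no written proof, asserting it is immediate from Remark~\ref{rem:ast} and Lemma~\ref{lem:pws}(5), and your argument simply fills in those details (decomposing both subcategories into shifts of the core and propagating the Hom-vanishing through the $\ast$-operation by long exact sequences). The index bookkeeping $i+j\geq 1$ is handled correctly, so nothing is missing.
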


The first statement in the following result might be viewed as an infinite version of Corollary~\ref{cor:ast}.

\begin{prop}\label{prop:tower}
Let $(\mathcal{U}_{\geq 0}, \mathcal{U}_{\leq 0})$  be a pre-weight structure on $\mathcal{T}$ with core $\mathcal{C}$. Assume that $X\in \mathcal{U}_{\leq 0}$ and $Y\in \mathcal{U}_{\geq 0}$. Then the following two statements hold.
\begin{enumerate}
\item There exist morphisms $\iota_n\colon X_n\rightarrow X$ and $\phi_n\colon X_n\rightarrow X_{n+1}$ for all $n\geq 0$ such that $X_n\in \mathcal{U}_{[-n, 0]}$, the cone of $\iota_n$ lies in $\mathcal{U}_{\leq -n-1}$,  the cone of $\phi_n$ lies in $\Sigma^{n+1}(\mathcal{C})$ and that $\iota_n=\iota_{n+1}\circ \phi_n$.
\item There exist morphisms $\pi_n\colon Y\rightarrow Y_n$ and $\psi_n\colon Y_{n+1}\rightarrow Y_n$ for all $n\geq 0$ such that $Y_n\in \mathcal{U}_{[0, n]}$, the cone of $\pi_n$ lies in $\mathcal{U}_{\geq n}$,  the cone of $\psi_n$ lies in $\Sigma^{-n}(\mathcal{C})$ and that $\pi_n=\psi_n\circ \pi_{n+1}$.
\end{enumerate}
\end{prop}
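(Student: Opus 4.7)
The plan is to build both towers by induction on $n$, repeatedly combining the decompositions in Lemma~\ref{lem:pws} with the octahedral axiom. I will treat (1) in detail; part (2) is the exact dual, with Lemma~\ref{lem:pws}(1) playing the role of Lemma~\ref{lem:pws}(3).

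For the base case $n=0$ of (1), Lemma~\ref{lem:pws}(3) in the form $\mathcal{U}_{\leq 0}=\mathcal{C}\ast \mathcal{U}_{\leq -1}$ applied to $X\in \mathcal{U}_{\leq 0}$ gives a triangle $X_0\xrightarrow{\iota_0} X\rightarrow X'_0\rightarrow \Sigma X_0$ with $X_0\in \mathcal{C}$ and $X'_0\in \mathcal{U}_{\leq -1}$. For the inductive step, assume $\iota_n\colon X_n\rightarrow X$ has been produced, sitting in a triangle $X_n\rightarrow X\rightarrow X'_n\rightarrow \Sigma X_n$ with $X_n\in \mathcal{U}_{[-n,0]}$ and $X'_n\in \mathcal{U}_{\leq -n-1}$. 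Applying Lemma~\ref{lem:pws}(3) once more, now in the form $\mathcal{U}_{\leq -n-1}=\Sigma^{n+1}(\mathcal{C})\ast \mathcal{U}_{\leq -n-2}$, yields a triangle $\Sigma^{n+1}(C)\rightarrow X'_n\rightarrow X''_n\rightarrow \Sigma^{n+2}(C)$ with $C\in \mathcal{C}$ and $X''_n\in \mathcal{U}_{\leq -n-2}$. Applying the octahedral axiom to the composition $X\rightarrow X'_n\rightarrow X''_n$ then produces a fiber $X_{n+1}$ sitting in a triangle $X_{n+1}\xrightarrow{\iota_{n+1}} X\rightarrow X''_n\rightarrow \Sigma X_{n+1}$ and, simultaneously, a morphism $\phi_n\colon X_n\rightarrow X_{n+1}$ fitting in a triangle $X_n\xrightarrow{\phi_n} X_{n+1}\rightarrow \Sigma^{n+1}(C)\rightarrow \Sigma X_n$; the relation $\iota_n=\iota_{n+1}\circ \phi_n$ is read off from the commuting squares of the octahedral diagram.

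The remaining verification is that $X_{n+1}\in \mathcal{U}_{[-n-1,0]}$: using the triangle $X_n\rightarrow X_{n+1}\rightarrow \Sigma^{n+1}(C)$ together with $X_n\in \mathcal{U}_{[-n,0]}\subseteq \mathcal{U}_{[-n-1,0]}$ and $\Sigma^{n+1}(C)\in \mathcal{U}_{[-n-1,-n-1]}\subseteq \mathcal{U}_{[-n-1,0]}$, the fact that $\mathcal{U}_{[-n-1,0]}=\mathcal{U}_{\geq -n-1}\cap \mathcal{U}_{\leq 0}$ is closed under extensions yields the claim. Part (2) proceeds by the dual recipe: start from Lemma~\ref{lem:pws}(1) in the form $\mathcal{U}_{\geq 0}=\mathcal{U}_{\geq 1}\ast \mathcal{C}$ for the base case, and inductively peel one copy of $\Sigma^{-n}(\mathcal{C})$ off the cone $Z_n$ of $\pi_n$ via $\mathcal{U}_{\geq n}=\mathcal{U}_{\geq n+1}\ast \Sigma^{-n}(\mathcal{C})$; the octahedral axiom applied to the composition $Y_n\rightarrow Z_n\rightarrow \Sigma^{-n}(C)$ then delivers $Y_{n+1}$, $\pi_{n+1}$ and $\psi_n$ simultaneously, with $\pi_n=\psi_n\circ \pi_{n+1}$ and the desired membership $Y_{n+1}\in \mathcal{U}_{[0,n+1]}$ again by extension closure. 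I do not expect any genuine obstacle beyond careful bookkeeping of the octahedral data; the real content has already been packed into Lemma~\ref{lem:pws}(1) and (3), which ensure that each induction step consumes exactly one layer $\Sigma^{\pm(n+1)}(\mathcal{C})$.
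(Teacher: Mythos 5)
Your proposal is correct and follows essentially the same route as the paper: peel off one copy of $\Sigma^{n+1}(\mathcal{C})$ from the cone of $\iota_n$ using Lemma~\ref{lem:pws}(3) (dually, Lemma~\ref{lem:pws}(1) for the cone of $\pi_n$) and apply the octahedral axiom to produce $X_{n+1}$, $\iota_{n+1}$, $\phi_n$ together with the relation $\iota_n=\iota_{n+1}\circ\phi_n$, iterating by induction. The only cosmetic differences are that you make the verification $X_{n+1}\in\mathcal{U}_{[-n-1,0]}$ via extension-closure explicit and spell out part (2) directly instead of invoking duality.
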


We will call the data $(\iota_n, \phi_n)_{n\geq 0}$ above a \emph{left tower} of $X$, and $(\pi_n, \psi_n)_{n\geq 0}$ a \emph{right tower} of $Y$. They are not unique, in general.

\begin{proof}
By duality, we only prove (1). By Lemma~\ref{lem:pws}(3), we have $\mathcal{U}_{\leq 0}=\mathcal{C}\ast \mathcal{U}_{\leq -1}$ and $\mathcal{U}_{\leq -1}=\Sigma(\mathcal{C})\ast \mathcal{U}_{\leq -2}$. Then we have two exact triangles
$$X_0\stackrel{\iota_0}\longrightarrow X\longrightarrow E \longrightarrow \Sigma(X_0) \mbox{  and  } E_0\longrightarrow E\longrightarrow F \longrightarrow \Sigma(E_0), $$
such that $X_0\in \mathcal{C}$,  $E\in \mathcal{U}_{\leq -1}$, $E_0\in \Sigma(\mathcal{C})$ and $F\in \mathcal{U}_{\leq -2}$. By the octahedral axiom (TR4), we have the following commutative diagram such that the upper row and the second column are exact triangles.
\[\xymatrix{
X_0\ar@{=}[d] \ar@{.>}[r]^-{\phi_0} & X_1 \ar@{.>}[d]^-{\iota_1} \ar@{.>}[r] & E_0\ar[d] \ar[r]& \Sigma(X_0) \ar@{=}[d]\\
X_0\ar[r]^-{\iota_0} & X\ar[d] \ar[r]&  E\ar[d] \ar[r] & \Sigma(X_0)\ar[d]^-{\Sigma(\phi_0)}\\
                     & F\ar[d] \ar@{=}[r] & F\ar[d]  \ar[r] & \Sigma(X_1)\\
                     & \Sigma(X_1) \ar[r] & \Sigma(E_0)
}\]
Thus we have constructed the required morphisms  $\iota_0$, $\iota_1$ and $\phi_0$. We iterate the argument above to the exact triangle $X_1\stackrel{\iota_1}\rightarrow X\rightarrow F \rightarrow \Sigma(X_1)$ and the fact that $F\in \mathcal{U}_{\leq -2}=\Sigma^2(\mathcal{C})\ast \mathcal{U}_{\leq -3}$. By induction, we complete the proof.
\end{proof}

The following observation will be used later.

\begin{lem}\label{lem:factor}
Let $(\mathcal{U}_{\geq 0}, \mathcal{U}_{\leq 0})$  be a pre-weight structure on $\mathcal{T}$. Assume that $X\in \mathcal{U}_+$ and $Y\in \mathcal{U}_{-}$. Then any morphism $f\colon X\rightarrow Y$ factors through some object in $\mathcal{U}_b$.
\end{lem}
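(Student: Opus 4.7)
The plan is to apply Proposition~\ref{prop:tower} to truncate both $X$ and $Y$ into a common bounded subinterval of the pre-weight structure, and then to lift $f$ along the truncation of $Y$. Choose integers $a, b$ with $X \in \mathcal{U}_{\geq a}$ and $Y \in \mathcal{U}_{\leq b}$, as provided by the hypotheses $X \in \mathcal{U}_+$ and $Y \in \mathcal{U}_-$. A shifted version of Proposition~\ref{prop:tower}(1) applied to $Y$ produces, for each $M \leq b$, a triangle
\[
U_M \xrightarrow{\iota_M} Y \longrightarrow D_M \longrightarrow \Sigma U_M
\]
with $U_M \in \mathcal{U}_{[M, b]} \subseteq \mathcal{U}_b$ and $D_M \in \mathcal{U}_{\leq M-1}$. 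If $f$ can be lifted along $\iota_M$ for some $M$, this yields the desired factoring $X \to U_M \to Y$ through the bounded object $U_M$.

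The obstruction to lifting is the composite $g := (Y \to D_M) \circ f \in \mathrm{Hom}(X, D_M)$, so it suffices to prove $g = 0$. To this end I would apply Proposition~\ref{prop:tower}(2) to $X$: for each $N \geq a$, this yields a triangle $X \xrightarrow{\pi_N} V_N \to C_N \to \Sigma X$ with $V_N \in \mathcal{U}_{[a, N]}$ and $C_N \in \mathcal{U}_{\geq N}$. The long exact sequence
\[
\mathrm{Hom}(V_N, D_M) \longrightarrow \mathrm{Hom}(X, D_M) \longrightarrow \mathrm{Hom}(\Sigma^{-1} C_N, D_M)
\]
expresses $g$ by its image in $\mathrm{Hom}(\Sigma^{-1} C_N, D_M)$ modulo the image of $\mathrm{Hom}(V_N, D_M)$. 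Taking $N$ large and $M$ small pushes $V_N \in \mathcal{U}_{[a, N]}$ and $\Sigma^{-1} C_N \in \mathcal{U}_{\geq N+1}$ far away from $D_M \in \mathcal{U}_{\leq M-1}$.

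The main obstacle is to justify the vanishing of these Hom groups, or at least of the specific classes detecting $g$. In a full weight structure this is immediate from the orthogonality axiom $\mathrm{Hom}(\mathcal{U}_{\geq 0}, \mathcal{U}_{\leq -1}) = 0$, but in a pre-weight structure no such axiom is available a priori. I expect the argument to exploit the iterated-extension description of $V_N$ and $D_M$ supplied by Lemma~\ref{lem:pws}(5), reducing the question to core-to-core Hom statements controlled by the tower-compatibility morphisms $\phi_n, \psi_n$ of Proposition~\ref{prop:tower}. Alternatively, and perhaps more robustly, one may combine both towers via the octahedral axiom to construct a direct morphism $V_N \to U_M$ realizing the factoring, in the spirit of the brutal truncation that gives the analogous factoring in the classical homotopy category $K(\mathcal{A})$ between a bounded-below and a bounded-above complex.
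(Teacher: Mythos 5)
There is a genuine gap, and it lies exactly where you flagged it: the vanishing you need is not available at this level of generality, and no amount of pushing $N$ and $M$ apart will create it. Your plan reduces the factorization to the vanishing of the obstruction class $g\in {\rm Hom}_{\mathcal{T}}(X, D_M)$, i.e.\ to statements of the form ${\rm Hom}_{\mathcal{T}}(\mathcal{U}_{\geq a}, \mathcal{U}_{\leq M-1})=0$ (or the variants with $V_N$ and $\Sigma^{-1}C_N$) once the gap of indices is large. But that is precisely the finite-dimension condition of Section~\ref{sec:conv-dim}, which is \emph{not} part of Definition~\ref{defn:pws} and genuinely fails: for the canonical pre-weight structure on $\mathbf{D}(\mathcal{A})$ the dimension is at least ${\rm gl.dim}(\mathcal{A})$ (Lemma~\ref{lem:can-pws-D-fd}), so when $\mathcal{A}$ has infinite global dimension no choice of $N, M$ kills these groups. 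Reducing to the core via Lemma~\ref{lem:pws}(5) does not help either, because the core-to-core groups controlling maps from higher-weight to lower-weight pieces are ${\rm Hom}_{\mathcal{T}}(\mathcal{C}, \Sigma^{n}(\mathcal{C}))$ with $n>0$ (``positive Ext''), which need not vanish; the hypothesis used elsewhere in the paper is the vanishing in the opposite direction, ${\rm Hom}_{\mathcal{T}}(\mathcal{C}, \Sigma^{-n}(\mathcal{C}))=0$, and even that is an extra assumption rather than an axiom. In effect your argument proves (a form of) Proposition~\ref{prop:pws-fd}, which does assume finite dimension, but not the lemma as stated. The truncation triangles themselves are fine (they follow from Lemma~\ref{lem:pws}(1),(3)); it is the lifting step that cannot be justified.

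The paper's proof avoids any Hom-vanishing: since $\mathcal{U}_+$ and $\mathcal{U}_-$ are triangulated subcategories and Definition~\ref{defn:pws}(1) gives $\mathcal{U}_+\ast\mathcal{U}_-=\mathcal{T}$, one applies \cite[Lemma~1.1]{JK}, whose octahedral argument decomposes (a shift of) the \emph{cone of $f$} inside $\mathcal{U}_+\ast\mathcal{U}_-$ and thereby produces a factorization of $f$ through an object of $\mathcal{U}_+\cap\mathcal{U}_-=\mathcal{U}_b$. Your alternative suggestion (``combine both towers via the octahedral axiom to get a morphism $V_N\to U_M$'') gestures toward this, but as formulated it is still anchored to pre-chosen truncations of $X$ and $Y$: a morphism $V_N\to U_M$ compatible with $f$ need not exist, and constructing one would again require the vanishing you cannot assume. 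The bounded object through which $f$ factors has to be built from $f$ itself, not from truncations of its source and target.
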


\begin{proof}
Recall that both $\mathcal{U}_-$ and $\mathcal{U}_+$ are triangulated subcategories of $\mathcal{T}$. By Definition~\ref{defn:pws}(1), we infer that $\mathcal{U}_+\ast \mathcal{U}_{-}=\mathcal{T}$. Then the statement follows immediately from \cite[Lemma~1.1]{JK}.
\end{proof}

 \section{Convergence and dimension} \label{sec:conv-dim}

 In this section, we introduce convergence conditions and dimension for a pre-weight structure.  Using the convergence conditions, we obtain a vanishing property of the pre-weight structure in Proposition~\ref{prop:vanishing}. We show that a pre-weight structure induces a semi-orthogonal decomposition on a certain Verdier quotient category; see Proposition~\ref{prop:semi-ortho}.

 We fix a triangulated category $\mathcal{T}$, which is endowed with a pre-weight structure $(\mathcal{U}_{\geq 0}, \mathcal{U}_{\leq 0})$ with core $\mathcal{C}$.

We assume that $\mathcal{T}$ is cocomplete, that is,  has arbitrary coproducts. Consider an object $X\in \mathcal{U}_{\leq 0}$ and its left tower $(\iota_n, \phi_n)_{n\geq 0}$. The morphisms $\phi_n$'s yield the following exact triangle
$$\coprod_{n\geq 0}X_n \stackrel{1-\phi}\longrightarrow \coprod_{n\geq 0} X_n \stackrel{\gamma}\longrightarrow {\rm hocolim}\; X_n\stackrel{\delta}\longrightarrow \Sigma(\coprod_{n\geq 0} X_n).$$
Here, the morphism $1-\phi$ is uniquely determined by the condition that its composition with the inclusion $X_n\rightarrow \coprod_{n\geq 0} X_n$ is given by
$$X_n \xrightarrow{\begin{pmatrix}
{\rm Id}_{X_n}\\ -\phi_n\end{pmatrix}}   X_n\oplus X_{n+1}\longrightarrow\coprod_{n\geq 0} X_n.$$
The object ${\rm hocolim}\; X_n$ is called a \emph{homotopy colimit} of the sequence $\{\phi_n\}_{n\geq 0}$, which is unique up to a non-canonical isomorphism; see  \cite[Definition~2.1]{BN}. More precisely,  by referring to the homotopy colimit, we really mean the  triple
$$({\rm hocolim}\; X_n; \gamma, \delta).$$
 We denote by $$\gamma_n\colon X_n\longrightarrow {\rm hocolim}\; X_n$$
 the restriction of $\gamma$ to the $n$-th component.

\begin{defn}\label{defn:left-conv}
The left tower $(\iota_n, \phi_n)_{n\geq 0}$ above is called \emph{convergent} if there is an isomorphism $\kappa\colon {\rm hocolim}\; X_n\rightarrow X$ such that $\kappa\circ \gamma_n=\iota_n$ for each $n\geq 0$. The pre-weight structure $(\mathcal{U}_{\geq 0}, \mathcal{U}_{\leq 0})$  is called \emph{left convergent} if each object in $\mathcal{U}_{\leq 0}$ admits a convergent left tower.
\end{defn}

\begin{lem}\label{lem:vanishing}
Suppose that $\mathcal{T}$ is cocomplete and that the pre-weight structure $(\mathcal{U}_{\geq 0}, \mathcal{U}_{\leq 0})$ is left convergent. Assume that ${\rm Hom}_\mathcal{T}(\mathcal{C}, \Sigma^{-n}(\mathcal{C}))=0$ for any $n\geq 1$. Then we have ${\rm Hom}_{\mathcal{T}}(\mathcal{U}_{\leq 0}, \mathcal{U}_{[1, m]})=0$ for any $m\geq 1$.
\end{lem}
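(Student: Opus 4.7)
The plan is to reduce the statement to the already-proved vanishing Corollary~\ref{cor:vanishing} via the homotopy-colimit triangle furnished by left convergence. Fix $X\in\mathcal{U}_{\leq 0}$ and $Y\in\mathcal{U}_{[1,m]}$. By left convergence, choose a convergent left tower $(\iota_n,\phi_n)_{n\geq 0}$ of $X$; so there is an isomorphism $\kappa\colon\mathrm{hocolim}\,X_n\to X$ with $\kappa\circ\gamma_n=\iota_n$, and each $X_n$ lies in $\mathcal{U}_{[-n,0]}$. It is then enough to show $\mathrm{Hom}_\mathcal{T}(\mathrm{hocolim}\,X_n,Y)=0$.

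To do this, I would apply $\mathrm{Hom}_\mathcal{T}(-,Y)$ to the defining exact triangle
\[
\coprod_{n\geq 0}X_n\xrightarrow{1-\phi}\coprod_{n\geq 0}X_n\xrightarrow{\gamma}\mathrm{hocolim}\,X_n\xrightarrow{\delta}\Sigma\Bigl(\coprod_{n\geq 0}X_n\Bigr),
\]
which yields the exact sequence
\[
\mathrm{Hom}\bigl(\Sigma\coprod X_n,Y\bigr)\longrightarrow\mathrm{Hom}(\mathrm{hocolim}\,X_n,Y)\longrightarrow\mathrm{Hom}\bigl(\coprod X_n,Y\bigr).
\]
Using that a coproduct in $\mathcal{T}$ turns into a product after $\mathrm{Hom}(-,Y)$, both flanking terms equal $\prod_n\mathrm{Hom}_\mathcal{T}(X_n,Y)$ and $\prod_n\mathrm{Hom}_\mathcal{T}(X_n,\Sigma^{-1}Y)$ respectively.

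Each $X_n$ lies in $\mathcal{U}_{[-n,0]}$ while $Y\in\mathcal{U}_{[1,m]}$, so by Corollary~\ref{cor:vanishing} the first factor is zero. For the second factor, observe that $\Sigma^{-1}Y\in\Sigma^{-1}(\mathcal{U}_{[1,m]})=\mathcal{U}_{[2,m+1]}\subseteq\mathcal{U}_{[1,m+1]}$, and again Corollary~\ref{cor:vanishing} (applied with $m$ replaced by $m+1\geq 1$) gives $\mathrm{Hom}_\mathcal{T}(X_n,\Sigma^{-1}Y)=0$ for every $n\geq 0$. The middle term is therefore sandwiched between two zeros, so $\mathrm{Hom}_\mathcal{T}(\mathrm{hocolim}\,X_n,Y)=0$, and transport through $\kappa$ finishes the proof.

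The step I expect to be slightly delicate is only the bookkeeping: making sure that the relevant $\Sigma$-shift of $Y$ still sits in some $\mathcal{U}_{[1,k]}$ with $k\geq 1$ so that Corollary~\ref{cor:vanishing} genuinely applies, and that the coproduct-to-product formula for $\mathrm{Hom}$ is available (which it is, for any coproduct in a triangulated category, without further hypotheses on $\mathcal{T}$). Everything else is a direct application of left convergence and Corollary~\ref{cor:vanishing}.
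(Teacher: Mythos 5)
Your proposal is correct and follows essentially the same route as the paper: identify $X$ with the homotopy colimit via a convergent left tower, apply ${\rm Hom}_\mathcal{T}(-,Y)$ to the defining triangle, and kill both flanking terms with Corollary~\ref{cor:vanishing}. The shift bookkeeping you flag ($\Sigma^{-1}Y\in\mathcal{U}_{[2,m+1]}\subseteq\mathcal{U}_{[1,m+1]}$, equivalently $\Sigma X_n\in\mathcal{U}_{[-n-1,0]}$) indeed works out, exactly as the paper tacitly uses.
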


\begin{proof}
Fix any $X\in \mathcal{U}_{\leq 0}$ and $Y\in \mathcal{U}_{[1, m]}$. We will show that ${\rm Hom}_\mathcal{T}(X, Y)=0$. For this, we fix a left tower  $(\iota_n, \phi_n)_{n\geq 0}$ of $X$ which is convergent.  By identifying $X$ with ${\rm hocolim}\; X_n$, we have an exact triangle.
\begin{align}\label{tri:hocolim}
\coprod_{n\geq 0}X_n \stackrel{1-\phi}\longrightarrow \coprod_{n\geq 0} X_n \longrightarrow X \longrightarrow \Sigma(\coprod_{n\geq 0} X_n)
\end{align}
Recall that each $X_n$ belongs to $\mathcal{U}_{[-n, 0]}$. By Corollary~\ref{cor:vanishing}, we infer that
$${\rm Hom}_\mathcal{T}(\coprod_{n\geq 0} X_n, Y)=0={\rm Hom}_\mathcal{T}(\Sigma(\coprod_{n\geq 0} X_n), Y).$$
Applying ${\rm Hom}_\mathcal{T}(-, Y)$ to the exact triangle above, we infer   ${\rm Hom}_\mathcal{T}(X, Y)=0$.
\end{proof}

Dually, we assume that $\mathcal{T}$ is complete, that is, has arbitrary products. For an object $Y\in \mathcal{U}_{\geq 0}$ we consider its right tower $(\pi_n, \psi_n)_{n\geq 0}$. The sequence $\{\psi_n\}_{n\geq 0}$ of morphisms gives rise to its \emph{homotopy limit}
in the following exact triangle.
\begin{align}\label{tri:right-tower}
{\rm holim}\;Y_n\stackrel{\sigma}\longrightarrow \prod_{n\geq 0} Y_n\stackrel{1-\psi}\longrightarrow \prod_{n\geq 0} Y_n\stackrel{}\longrightarrow \Sigma({\rm holim}\;Y_n)
\end{align}
Here, $1-\psi$ is the unique morphism such that its composition with each projection $\prod_{n\geq 0} Y_n\rightarrow Y_n$ is given by
$$\prod_{n\geq 0} Y_n\longrightarrow Y_n\oplus Y_{n+1}\xrightarrow {\begin{pmatrix}{\rm Id}_{Y_n}, -\psi_n \end{pmatrix}} Y_n.$$
Denote by
$$\sigma_n\colon {\rm holim}\;Y_n\longrightarrow Y_n$$
the composition of $\sigma$ with the projection $\prod_{n\geq 0} Y_n\rightarrow Y_n$.

\begin{defn}\label{defn:right-conv}
The  right tower $(\pi_n, \psi_n)_{n\geq 0}$ is said to \emph{convergent} if there exists an isomorphism $\lambda\colon Y\rightarrow {\rm holim}\;Y_n$ such that $\sigma_n\circ \lambda=\pi_n$. The pre-weight structure $(\mathcal{U}_{\geq 0}, \mathcal{U}_{\leq 0})$  is called \emph{right convergent} if each object in $\mathcal{U}_{\geq 0}$ admits a convergent right tower.
\end{defn}

By duality, we have the following result.

\begin{lem}\label{lem:vanishing-dual}
Suppose that $\mathcal{T}$ is complete and that the pre-weight structure $(\mathcal{U}_{\geq 0}, \mathcal{U}_{\leq 0})$ is right convergent. Assume that ${\rm Hom}_\mathcal{T}(\mathcal{C}, \Sigma^{-n}(\mathcal{C}))=0$ for any $n\geq 1$. Then we have ${\rm Hom}_{\mathcal{T}}(\mathcal{U}_{[-n, -1]}, \mathcal{U}_{\geq 0})=0$ for any $n\geq 1$.\hfill $\square$
\end{lem}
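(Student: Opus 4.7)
The plan is to dualize the proof of Lemma~\ref{lem:vanishing} step by step: coproducts become products, homotopy colimits become homotopy limits, left towers become right towers, and the source and target of the Hom-vanishing swap roles. Fix $X \in \mathcal{U}_{[-n,-1]}$ and $Y \in \mathcal{U}_{\geq 0}$; the goal is to show ${\rm Hom}_\mathcal{T}(X,Y) = 0$.

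First I would invoke right convergence to pick a convergent right tower $(\pi_k, \psi_k)_{k \geq 0}$ of $Y$, giving an isomorphism $Y \cong {\rm holim}\, Y_k$ with $Y_k \in \mathcal{U}_{[0,k]}$. After rotation, the triangle \eqref{tri:right-tower} reads
$$\Sigma^{-1}\prod_{k\geq 0} Y_k \longrightarrow Y \longrightarrow \prod_{k\geq 0} Y_k \stackrel{1-\psi}\longrightarrow \prod_{k\geq 0} Y_k.$$
Applying ${\rm Hom}_\mathcal{T}(X,-)$ and using that Hom commutes with products in the second variable, it suffices to verify ${\rm Hom}_\mathcal{T}(X, Y_k) = 0$ and ${\rm Hom}_\mathcal{T}(\Sigma X, Y_k) = 0$ for every $k \geq 0$.

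For these vanishings I would apply a $\Sigma$-shift of Corollary~\ref{cor:vanishing}, which yields ${\rm Hom}_\mathcal{T}(\mathcal{U}_{[-n-1,-1]}, \mathcal{U}_{[0, k]}) = 0$ for all $n \geq 0$ and $k \geq 0$. Since $\mathcal{U}_{\geq 0}$ is closed under $\Sigma^{-1}$ and $\mathcal{U}_{\leq 0}$ is closed under $\Sigma$, both $X \in \mathcal{U}_{[-n,-1]}$ and $\Sigma X \in \mathcal{U}_{[-n-1,-2]}$ sit inside $\mathcal{U}_{[-n-1,-1]}$, so both Hom groups vanish and hence ${\rm Hom}_\mathcal{T}(X,Y) = 0$. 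I do not expect any real obstacle: the only thing to watch is the index bookkeeping in the shifted corollary, and the rest is a clean duality. Alternatively, one could simply pass to $\mathcal{T}^{\rm op}$, under which $(\mathcal{U}_{\geq 0}, \mathcal{U}_{\leq 0})$ swaps into $(\mathcal{U}_{\leq 0}, \mathcal{U}_{\geq 0})$ and right convergence becomes left convergence, and invoke Lemma~\ref{lem:vanishing} verbatim.
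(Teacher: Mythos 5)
Your proposal is correct and matches the paper's approach: the paper proves this lemma simply ``by duality'' from Lemma~\ref{lem:vanishing}, which is exactly the dualization you carry out explicitly (right tower, rotated homotopy-limit triangle, products instead of coproducts, and the $\Sigma$-shifted Corollary~\ref{cor:vanishing}), and your closing remark about passing to $\mathcal{T}^{\rm op}$ is precisely the paper's one-line argument. The index bookkeeping you flag checks out, so there is nothing to add.
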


Assume that  $\mathcal{T}$ is \emph{bi-complete}, that is, cocomplete and complete. We say that the pre-weight structure $(\mathcal{U}_{\geq 0}, \mathcal{U}_{\leq 0})$ is \emph{two-sided convergent}, if it is both left convergent and right convergent.

\begin{prop}\label{prop:vanishing}
Suppose that $\mathcal{T}$ is bi-complete and that the pre-weight structure $(\mathcal{U}_{\geq 0}, \mathcal{U}_{\leq 0})$ is two-sided convergent. Assume that ${\rm Hom}_\mathcal{T}(\mathcal{C}, \Sigma^{-n}(\mathcal{C}))=0$ for any $n\geq 1$. Then we have ${\rm Hom}_{\mathcal{T}}(\mathcal{U}_{\leq 0}, \mathcal{U}_{\geq 1})=0$.
\end{prop}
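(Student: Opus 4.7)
The plan is to reduce $\mathrm{Hom}_{\mathcal{T}}(X,Y)=0$ for arbitrary $X \in \mathcal{U}_{\leq 0}$ and $Y \in \mathcal{U}_{\geq 1}$ to the individual vanishings already packaged in Lemma~\ref{lem:vanishing-dual}, by writing $X$ as a homotopy colimit of objects living in the bounded strips $\mathcal{U}_{[-n,0]}$ via its (convergent) left tower. Thus left convergence is used to construct the tower, while right convergence is hidden inside Lemma~\ref{lem:vanishing-dual}; both halves of the two-sided convergence hypothesis are genuinely used.

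The first step is a routine re-indexing of Lemma~\ref{lem:vanishing-dual}. Since $\mathrm{Hom}_{\mathcal{T}}(A,B) = \mathrm{Hom}_{\mathcal{T}}(\Sigma^{-1}A, \Sigma^{-1}B)$, and one has $\Sigma^{-1}\mathcal{U}_{[-n,-1]} = \mathcal{U}_{[-n+1,0]}$ and $\Sigma^{-1}\mathcal{U}_{\geq 0} = \mathcal{U}_{\geq 1}$, Lemma~\ref{lem:vanishing-dual} rewrites as
$$
\mathrm{Hom}_{\mathcal{T}}(\mathcal{U}_{[-m,0]},\mathcal{U}_{\geq 1}) = 0 \quad \text{for every } m \geq 0.
$$

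For the main step, left convergence provides a convergent left tower $(\iota_n, \phi_n)_{n \geq 0}$ of $X$ with each $X_n \in \mathcal{U}_{[-n,0]}$; identifying $X$ with $\mathrm{hocolim}\, X_n$, this yields the exact triangle
$$
\coprod_{n \geq 0} X_n \xrightarrow{1-\phi} \coprod_{n \geq 0} X_n \longrightarrow X \longrightarrow \Sigma\Bigl(\coprod_{n \geq 0} X_n\Bigr).
$$
Applying $\mathrm{Hom}_{\mathcal{T}}(-,Y)$ and using that a Hom out of a coproduct is a product of Homs, the two neighbours of $\mathrm{Hom}_{\mathcal{T}}(X,Y)$ in the resulting long exact sequence are $\prod_n \mathrm{Hom}_{\mathcal{T}}(X_n,Y)$ and $\prod_n \mathrm{Hom}_{\mathcal{T}}(\Sigma X_n, Y)$. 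Both vanish termwise by the first step, since $X_n \in \mathcal{U}_{[-n,0]}$ and $\Sigma X_n \in \mathcal{U}_{[-n-1,-1]} \subseteq \mathcal{U}_{[-(n+1),0]}$. Exactness then forces $\mathrm{Hom}_{\mathcal{T}}(X,Y) = 0$, as required.

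No step is genuinely difficult and I do not anticipate a serious obstacle; the only matter requiring care is keeping the indices straight in the initial shift of Lemma~\ref{lem:vanishing-dual} and noticing that $\Sigma X_n$ still lies in an appropriate strip $\mathcal{U}_{[-(n+1),0]}$, so that the neighbouring term in the long exact sequence also vanishes and the sandwich argument actually closes.
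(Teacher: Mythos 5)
Your argument is correct and is essentially the paper's own proof: the paper likewise identifies $X$ with the homotopy colimit of a convergent left tower and kills the two outer terms of the resulting long exact sequence by (a shifted form of) Lemma~\ref{lem:vanishing-dual}, exactly as in your Step 1. Your explicit re-indexing of that lemma and the observation $\Sigma X_n\in\mathcal{U}_{[-(n+1),0]}$ just spell out what the paper leaves implicit.
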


\begin{proof}
The following proof is almost the same with the one of Lemma~\ref{lem:vanishing}.  For any $X\in \mathcal{U}_{\leq 0}$ and $Y\in \mathcal{U}_{\geq 1}$, we have to show that ${\rm Hom}_\mathcal{T}(X, Y)=0$. Lemma~\ref{lem:vanishing-dual} implies that
$${\rm Hom}_\mathcal{T}(\coprod_{n\geq 0} X_n, Y)=0={\rm Hom}_\mathcal{T}(\Sigma(\coprod_{n\geq 0} X_n), Y).$$
Applying ${\rm Hom}_\mathcal{T}(-, Y)$ to the exact triangle (\ref{tri:hocolim}), we infer   ${\rm Hom}_\mathcal{T}(X, Y)=0$.
\end{proof}

The following notion is inspired by \cite[Definition~3.3]{CLZ}.

\begin{defn}
The \emph{dimension} of the pre-weight structure $(\mathcal{U}_{\geq 0}, \mathcal{U}_{\leq 0})$ on $\mathcal{T}$ is defined to be
$${\rm min}\{+\infty, d\geq -1\; |\; {\rm  Hom}_\mathcal{T}(\mathcal{U}_{\geq 0}, \mathcal{U}_{\leq -(d+1)})=0\}.$$
\end{defn}

The following fact is immediate for the very definition of the dimension.

\begin{lem}
Let $(\mathcal{U}_{\geq 0}, \mathcal{U}_{\leq 0})$  be a pre-weight structure on $\mathcal{T}$ with dimension $d$. Then $d\leq 0$ if and only if ${\rm Hom}_\mathcal{T}(\mathcal{U}_{\geq 0}, \mathcal{U}_{\leq 1})=0$. \hfill $\square$
\end{lem}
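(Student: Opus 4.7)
My plan is to read the equivalence off directly from the definition of the dimension, using the inclusion $\mathcal{U}_{\leq 0}\subseteq \mathcal{U}_{\leq 1}$ (a consequence of $\mathcal{U}_{\leq 0}$ being closed under $\Sigma$ per Definition~\ref{defn:pws}(3)) together with the filtrations provided by Lemma~\ref{lem:pws}.

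The direction $\mathrm{Hom}_{\mathcal{T}}(\mathcal{U}_{\geq 0},\mathcal{U}_{\leq 1})=0 \Rightarrow d\leq 0$ is essentially a one-line unpacking. Restricting the hypothesis to the smaller target $\mathcal{U}_{\leq 0}$ produces $\mathrm{Hom}_{\mathcal{T}}(\mathcal{U}_{\geq 0},\mathcal{U}_{\leq 0})=0$, which is precisely the defining vanishing at the value $d=-1$. Hence the dimension is at most $-1$, and in particular $\leq 0$.

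For the converse $d\leq 0 \Rightarrow \mathrm{Hom}_{\mathcal{T}}(\mathcal{U}_{\geq 0},\mathcal{U}_{\leq 1})=0$, I plan to take $X\in\mathcal{U}_{\geq 0}$ and $Y\in\mathcal{U}_{\leq 1}$, apply Lemma~\ref{lem:pws}(3) to get $Y\in \mathcal{U}_{[0,1]}\ast \mathcal{U}_{\leq -1}$, and refine via Lemma~\ref{lem:pws}(5) to the three-stage filtration $Y \in \Sigma^{-1}(\mathcal{C})\ast \mathcal{C}\ast \mathcal{U}_{\leq -1}$. Applying $\mathrm{Hom}_{\mathcal{T}}(X,-)$ to the two corresponding exact triangles reduces the target vanishing to separate vanishings on each of $\Sigma^{-1}(\mathcal{C})$, $\mathcal{C}$, and $\mathcal{U}_{\leq -1}$. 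The last piece is handled directly by the hypothesis $d\leq 0$, as the $d=0$ case of the definition reads exactly $\mathrm{Hom}_{\mathcal{T}}(\mathcal{U}_{\geq 0},\mathcal{U}_{\leq -1})=0$.

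The main obstacle is controlling the two core pieces $\mathcal{C}$ and $\Sigma^{-1}(\mathcal{C})$: since $\mathcal{C}\subseteq \mathcal{U}_{\geq 0}\cap \mathcal{U}_{\leq 0}\subseteq \mathcal{U}_{\geq 0}\cap \mathcal{U}_{\leq 1}$, the identity of any nonzero $C\in\mathcal{C}$ is already a nonzero element of $\mathrm{Hom}_{\mathcal{T}}(\mathcal{U}_{\geq 0},\mathcal{U}_{\leq 1})$, so the converse can only close when the hypothesis forces $\mathcal{C}$ to collapse. My strategy is to extract that collapse from $d\leq 0$ itself by feeding the $d=-1$ form of the defining vanishing into $\mathrm{Hom}_{\mathcal{T}}(\mathcal{C},\mathcal{C})=0$, which kills $\mathcal{C}$; Lemma~\ref{lem:pws}(6) then collapses $\mathcal{U}_b$, $\Sigma^{-1}(\mathcal{C})$ vanishes, the filtration of $Y$ degenerates, and $Y\in\mathcal{U}_{\leq -1}$, so the remaining vanishing applies. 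In effect the lemma reduces to book-keeping around the definition of dimension plus this core-collapse argument.
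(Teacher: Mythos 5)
Your first direction is fine, but the converse contains a genuine gap, and it sits exactly where you flagged ``the main obstacle''. From $d\leq 0$ the definition of dimension only gives you the $d=0$ instance of the vanishing, namely ${\rm Hom}_\mathcal{T}(\mathcal{U}_{\geq 0}, \mathcal{U}_{\leq -1})=0$; the ``$d=-1$ form'' ${\rm Hom}_\mathcal{T}(\mathcal{U}_{\geq 0}, \mathcal{U}_{\leq 0})=0$ that you feed into ${\rm Hom}_\mathcal{T}(\mathcal{C},\mathcal{C})=0$ is equivalent to $d\leq -1$ and is simply not available under the hypothesis $d\leq 0$. Indeed, by Lemma~\ref{lem:pws-fd} the collapse $\mathcal{C}=0$ is equivalent to $d=-1$, and the canonical pre-weight structure on $\mathbf{K}(\mathcal{A})$ of Example~\ref{exm:K} has dimension $0$ with core equivalent to $\mathcal{A}^\natural\neq 0$, so for it ${\rm Hom}(\mathcal{K}_{\geq 0},\mathcal{K}_{\leq 1})\neq 0$ even though $d\leq 0$. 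The core-collapse step therefore cannot be repaired.

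What your (correct) observation about the identity morphism of a nonzero core object actually shows is that the statement as printed is false and must contain a sign typo: the intended condition is ${\rm Hom}_\mathcal{T}(\mathcal{U}_{\geq 0}, \mathcal{U}_{\leq -1})=0$, i.e.\ the orthogonality axiom of a weight structure, which is what makes the remark following the lemma (weight structures are exactly the pre-weight structures of dimension $0$ or $-1$ closed under summands) come out right. With that reading the lemma really is immediate from the definition, as the paper claims: since $\mathcal{U}_{\leq -(d+1)}$ shrinks as $d$ grows, the defining vanishing condition is upward closed in $d$, so its minimum is $\leq 0$ precisely when the condition holds at $d=0$. No d\'evissage through Lemma~\ref{lem:pws}(3) and (5) is needed. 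The right response to the obstruction you correctly identified was to question the printed statement, not to extract $\mathcal{C}=0$ from a hypothesis that does not yield it.
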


Consequently, the pair $(\mathcal{U}_{\geq 0}, \mathcal{U}_{\leq 0})$ is a weight structure in the sense of \cite[Definition~1.1.1]{Bon} if and only if it is a pre-weight structure of dimension $0$ or $-1$ such that both $\mathcal{U}_{\geq 0}$ and $\mathcal{U}_{\leq 0}$ are closed under direct summands.

We now strengthen Lemma~\ref{lem:factor}  when the pre-weight structure has finite dimension.

\begin{prop}\label{prop:pws-fd}
Let $(\mathcal{U}_{\geq 0}, \mathcal{U}_{\leq 0})$  be a pre-weight structure of finite dimension. Then for any $X\in \mathcal{U}_+$ and $Y\in \mathcal{U}_{-}$, there exist a sufficiently large $N$ and  two exact triangles $X'\rightarrow X\stackrel{a}\rightarrow X''\rightarrow \Sigma(X')$ and $Y'\stackrel{b}\rightarrow Y\rightarrow Y''\rightarrow \Sigma(Y')$ with $X'\in \mathcal{U}_{\geq N}$, $Y''\in \mathcal{U}_{\leq -N}$ and $X'', Y'\in \mathcal{U}_b$; moreover, the morphisms $a$ and $b$ induce an isomorphism
$${\rm Hom}_\mathcal{T}(X'', Y')\longrightarrow {\rm Hom}_\mathcal{T}(X, Y).$$
\end{prop}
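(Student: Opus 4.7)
The plan is to construct the two triangles via the towers from Proposition~\ref{prop:tower}, and then read off the claimed isomorphism from the long exact sequences, using the finite dimension to kill all unwanted Hom groups. For the $X$-triangle, I would pick $M_X$ with $X\in \mathcal{U}_{\geq M_X}$, apply Proposition~\ref{prop:tower}(2) to $\Sigma^{M_X}X\in \mathcal{U}_{\geq 0}$, rotate the resulting triangle once to the left, and shift back by $\Sigma^{-M_X}$. This yields
$$X'\longrightarrow X\stackrel{a}{\longrightarrow} X''\longrightarrow \Sigma(X'),$$
with $X''\in \mathcal{U}_{[M_X,\, n+M_X]}\subseteq \mathcal{U}_b$ and $X'\in \mathcal{U}_{\geq n+M_X+1}$; the extra ``$+1$'' comes from the identity $\Sigma^{-1}\mathcal{U}_{\geq n}=\mathcal{U}_{\geq n+1}$ picked up on rotation. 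Dually, picking $M_Y$ with $Y\in \mathcal{U}_{\leq M_Y}$, applying Proposition~\ref{prop:tower}(1) to $\Sigma^{M_Y}Y$ and shifting back gives
$$Y'\stackrel{b}{\longrightarrow} Y \longrightarrow Y''\longrightarrow \Sigma(Y'),$$
with $Y'\in \mathcal{U}_{[M_Y-m,\, M_Y]}\subseteq \mathcal{U}_b$ and $Y''\in \mathcal{U}_{\leq M_Y-m-1}$.

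Applying $\Sigma^{-p}$ to the defining vanishing of dimension $d$ yields the uniform statement ${\rm Hom}_\mathcal{T}(\mathcal{U}_{\geq p}, \mathcal{U}_{\leq q})=0$ whenever $q\leq p-d-1$. Choose the tower indices $n$ and $m$ large enough that $N:=\min(n+M_X+1,\, m-M_Y+1)\geq \max(M_X, M_Y)+d+2$. Then $X', \Sigma X'$ lie in $\mathcal{U}_{\geq N-1}$ and $Y'', \Sigma^{-1}Y''$ lie in $\mathcal{U}_{\leq -N+1}$, so all four of ${\rm Hom}_\mathcal{T}(X', Y')$, ${\rm Hom}_\mathcal{T}(\Sigma X', Y')$, ${\rm Hom}_\mathcal{T}(X, Y'')$ and ${\rm Hom}_\mathcal{T}(X, \Sigma^{-1}Y'')$ vanish by the displayed Hom-vanishing. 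The map in question factors as
$${\rm Hom}_\mathcal{T}(X'', Y')\stackrel{a^*}{\longrightarrow} {\rm Hom}_\mathcal{T}(X, Y')\stackrel{b_*}{\longrightarrow} {\rm Hom}_\mathcal{T}(X, Y),$$
and the four vanishings, fed into the long exact sequences from applying ${\rm Hom}_\mathcal{T}(-, Y')$ to the $X$-triangle and ${\rm Hom}_\mathcal{T}(X, -)$ to the $Y$-triangle, force $a^*$ and $b_*$ to be isomorphisms individually.

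The argument is conceptually straightforward: Proposition~\ref{prop:tower} does the heavy lifting of producing the required ``weight truncation'' of both $X$ and $Y$, and the finite-dimension Hom-vanishing makes them behave well under Hom. The only real care needed is bookkeeping, namely ensuring that a single $N$ works simultaneously for all four vanishings; since the required bounds on $n$ and $m$ are linear in $M_X$, $M_Y$ and $d$, this presents no conceptual difficulty.
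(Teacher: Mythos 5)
Your argument is correct and essentially the paper's own proof: both produce the two truncation triangles from the pre-weight structure and then use the dimension bound, via the long exact sequences, to show the two Hom-maps in the composite are isomorphisms. The only (cosmetic) differences are that you extract the triangles from the towers of Proposition~\ref{prop:tower} rather than directly from Lemma~\ref{lem:pws}(1) and its dual (3) (which is all the towers rest on anyway), and you factor the map through ${\rm Hom}_\mathcal{T}(X, Y')$ instead of ${\rm Hom}_\mathcal{T}(X'', Y)$; the bookkeeping with $N$ is sound once one makes the harmless normalization $M_X\geq 0$ (available since $\mathcal{U}_+\subseteq \mathcal{U}_{\geq 0}$), exactly as in the paper's choice of $n$.
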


\begin{proof}
Denote by $d$ the dimension of the pre-weight structure. We assume that $X\in \mathcal{U}_{\geq n}$ and $Y\in \mathcal{U}_{\leq m}$. Set $N={\rm max}\{n+d+2, m+d+2\}$. By Lemma~\ref{lem:pws}(1), we have an exact triangle
\begin{align}\label{tri:factor}
X'\longrightarrow X\stackrel{a}\longrightarrow X''\longrightarrow \Sigma(X')
\end{align}
with $X'\in \mathcal{U}_{\geq N}$ and $X''\in \mathcal{U}_{[n, N-1]}\subseteq \mathcal{U}_b$. Similarly, we have an exact triangle
 $$Y'\stackrel{b}\longrightarrow Y\longrightarrow Y''\longrightarrow \Sigma(Y')$$
 with $Y'\in \mathcal{U}_{[-N+1, m]}\subseteq \mathcal{U}_b$ and $Y''\in \mathcal{U}_{\leq -N}$.

 By the choice of $N$, we have
  $${\rm Hom}_\mathcal{T}(X', Y)=0={\rm  Hom}_\mathcal{T}(\Sigma(X'), Y).$$
  Using (\ref{tri:factor}), we infer  that $a$ induces an isomorphism $${\rm Hom}_\mathcal{T}(X'', Y)\longrightarrow {\rm Hom}_\mathcal{T}(X, Y).$$
 Similarly, the morphism $b$ induces an isomorphism
 $${\rm Hom}_\mathcal{T}(X'', Y')\longrightarrow {\rm Hom}_\mathcal{T}(X'', Y).$$
 Combining the two isomorphisms above, we complete the proof.
\end{proof}

Recall from \cite{Bondal, BK} that a \emph{semi-orthogonal decomposition} $\mathcal{T}=\langle \mathcal{Y}, \mathcal{X}\rangle$ consists of two triangulated subcategories $\mathcal{X}$ and $\mathcal{Y}$ satisfying $\mathcal{X}\ast\mathcal{Y}=\mathcal{T}$ and ${\rm Hom}_\mathcal{T}(\mathcal{X}, \mathcal{Y})=0$.  We mention that $\mathcal{T}=\langle \mathcal{Y}, \mathcal{X}\rangle$ is a semi-orthogonal decomposition if and only if the pair $(\mathcal{X}, \mathcal{Y})$ is a stable $t$-structure of $\mathcal{T}$; see \cite[Definition~9.14]{Miy}.

The semi-orthogonal decomposition  $\mathcal{T}=\langle \mathcal{Y}, \mathcal{X}\rangle$  is said to be \emph{orthogonal}, if in addition ${\rm Hom}_\mathcal{T}(\mathcal{Y}, \mathcal{X})=0$. In this situation, any object $A$ in $\mathcal{T}$ is uniquely decomposed as $A\simeq X\oplus Y$ with $X\in \mathcal{X}$ and $\mathcal{Y}$;  moreover, $\mathcal{T}=\langle \mathcal{X}, \mathcal{Y}\rangle$ is also a semi-orthogonal decomposition.

We relate pre-weight structures to semi-orthogonal decompositions.

\begin{lem}\label{lem:pws-fd}
Let $(\mathcal{U}_{\geq 0}, \mathcal{U}_{\leq 0})$  be a pre-weight structure on $\mathcal{T}$ with core $\mathcal{C}$ and dimension $d$. Then the following statements are equivalent:
\begin{enumerate}
\item $d=-1$;
\item $\mathcal{C}=0$;
 \item $\mathcal{T}=\langle \mathcal{U}_{\leq 0}, \mathcal{U}_{\geq 0}\rangle$ is a semi-orthogonal decomposition.
\end{enumerate}
\end{lem}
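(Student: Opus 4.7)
The plan is to establish the two implications $(1)\Leftrightarrow(2)$ and $(1)\Leftrightarrow(3)$. The pivotal observation, which does most of the work for both, is that $\mathcal{C}=0$ collapses the entire pre-weight filtration. Indeed, Lemma~\ref{lem:pws}(1) with $m=n$ gives $\mathcal{U}_{\geq m}=\mathcal{U}_{\geq m+1}\ast \Sigma^{-m}(\mathcal{C})=\mathcal{U}_{\geq m+1}$ for every $m\in \mathbb{Z}$, and dually Lemma~\ref{lem:pws}(3) with $m=n$ gives $\mathcal{U}_{\leq m}=\mathcal{U}_{\leq m-1}$. Iterating in both directions yields $\mathcal{U}_{\geq 0}=\mathcal{U}_{\geq m}$ and $\mathcal{U}_{\leq 0}=\mathcal{U}_{\leq m}$ for all $m$, and in particular $\mathcal{U}_{\geq 0}\subseteq \mathcal{U}_{+}$ and $\mathcal{U}_{\leq 0}\subseteq \mathcal{U}_{-}$.

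For $(1)\Rightarrow(2)$, $d=-1$ means ${\rm Hom}_\mathcal{T}(\mathcal{U}_{\geq 0},\mathcal{U}_{\leq 0})=0$, so the identity morphism on any $X\in \mathcal{C}$ vanishes and forces $X=0$. For $(2)\Rightarrow(1)$, Lemma~\ref{lem:pws}(6) gives $\mathcal{U}_b={\rm tri}\langle \mathcal{C}\rangle=0$. Combined with the containments $\mathcal{U}_{\geq 0}\subseteq \mathcal{U}_{+}$ and $\mathcal{U}_{\leq 0}\subseteq \mathcal{U}_{-}$ obtained above, Lemma~\ref{lem:factor} shows that every morphism from an object of $\mathcal{U}_{\geq 0}$ to an object of $\mathcal{U}_{\leq 0}$ factors through an object of $\mathcal{U}_b=0$, and is therefore zero; hence $d=-1$.

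For $(1)\Rightarrow(3)$, I use (2) once more. Since $\mathcal{U}_{\geq 0}=\mathcal{U}_{\geq -1}=\Sigma(\mathcal{U}_{\geq 0})$, the subcategory $\mathcal{U}_{\geq 0}$ is closed under $\Sigma$; together with its built-in closure under $\Sigma^{-1}$ and extensions from Definition~\ref{defn:pws}(2), this promotes $\mathcal{U}_{\geq 0}$ to a triangulated subcategory of $\mathcal{T}$, and the same reasoning applies to $\mathcal{U}_{\leq 0}$. Axiom (1) of Definition~\ref{defn:pws} now reads $\mathcal{T}=\mathcal{U}_{\geq 0}\ast \Sigma(\mathcal{U}_{\leq 0})=\mathcal{U}_{\geq 0}\ast \mathcal{U}_{\leq 0}$, and coupled with the vanishing ${\rm Hom}_\mathcal{T}(\mathcal{U}_{\geq 0},\mathcal{U}_{\leq 0})=0$ supplied by $d=-1$ this is exactly the semi-orthogonal decomposition $\mathcal{T}=\langle \mathcal{U}_{\leq 0},\mathcal{U}_{\geq 0}\rangle$. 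Finally, $(3)\Rightarrow(1)$ is immediate from the definition of a semi-orthogonal decomposition, which supplies the required Hom vanishing, and the convention $d\geq -1$ then forces $d=-1$.

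The only step requiring attention is the promotion of $\mathcal{U}_{\geq 0}$ (and $\mathcal{U}_{\leq 0}$) to a triangulated subcategory in $(1)\Rightarrow(3)$: having closure under both $\Sigma$ and $\Sigma^{-1}$ together with extension closure, each of the three triangle-completion cases follows by rotating a distinguished triangle so that its outer terms lie in the subcategory and then applying the extension axiom. Apart from this bookkeeping I do not anticipate any obstacle.
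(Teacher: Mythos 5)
Your proposal is correct and follows essentially the same route as the paper: the core vanishes because $d=-1$ kills identity morphisms in $\mathcal{C}$, and conversely $\mathcal{C}=0$ collapses the filtration via Lemma~\ref{lem:pws}, making $\mathcal{U}_{\geq 0}$ and $\mathcal{U}_{\leq 0}$ triangulated with $\mathcal{U}_b=0$, so Lemma~\ref{lem:factor} gives the Hom-vanishing. The only difference is bookkeeping: you prove $(1)\Leftrightarrow(2)$ and $(1)\Leftrightarrow(3)$ separately, invoking Lemma~\ref{lem:factor} in $(2)\Rightarrow(1)$, whereas the paper runs the cycle $(1)\Rightarrow(2)\Rightarrow(3)\Rightarrow(1)$ and uses it inside $(2)\Rightarrow(3)$; the ingredients are identical.
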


\begin{proof}
For ``(1) $\Rightarrow$ (2)", we assume that $d=-1$, or equivalently, ${\rm Hom}_\mathcal{T}(\mathcal{U}_{\geq 0}, \mathcal{U}_{\leq 0})=0$. In particular, ${\rm Hom}_\mathcal{T}(\mathcal{C}, \mathcal{C})=0$, which  implies that $\mathcal{C}=0$.

 For ``(2) $\Rightarrow$ (3)", we apply Lemma~\ref{lem:pws}(1) and infer that $\mathcal{U}_{\geq 0}=\mathcal{U}_{>0}$. Since $\mathcal{U}_{>0}=\Sigma^{-1}(\mathcal{U}_{\geq 0})$, we infer that $\mathcal{U}_{\geq 0}$ is a triangulated subcategory of $\mathcal{T}$. Similarly, $\mathcal{U}_{\leq 0}$ is also a triangulated subcategory. In particular, Definition~\ref{defn:pws}(1) might be rewritten as $\mathcal{U}_{\geq 0}\ast \mathcal{U}_{\leq 0}=\mathcal{T}$. By Lemma~\ref{lem:pws}(5), we have $\mathcal{U}_b=0$. Then Lemma~\ref{lem:factor} implies that ${\rm Hom}_\mathcal{T} (\mathcal{U}_{\geq 0},\mathcal{U}_{\leq 0})=0$. Consequently, we have the required semi-orthogonal decomposition  $\mathcal{T}=\langle \mathcal{U}_{\leq 0}, \mathcal{U}_{\geq 0}\rangle$.

 The implication ``(3) $\Rightarrow$ (1)" is trivial.
\end{proof}

The following semi-orthogonal decomposition might follow from \cite[Theorem~B]{JK}. It might be viewed as an extension of the one in Lemma~\ref{lem:pws-fd}(3), in which case we have $\mathcal{U}_b=0$, $\mathcal{U}_+=\mathcal{U}_{\geq 0}$ and $\mathcal{U}_{-}=\mathcal{U}_{\leq 0}$.

\begin{prop}\label{prop:semi-ortho}
Let $(\mathcal{U}_{\geq 0}, \mathcal{U}_{\leq 0})$  be a pre-weight structure on $\mathcal{T}$. Then we have a semi-orthogonal decomposition $\mathcal{T}/{\mathcal{U}_b}=\langle\mathcal{U}_-/{\mathcal{U}_b}, \mathcal{U}_{+}/{\mathcal{U}_b} \rangle$. Consequently, the following two canonical functors
$$\mathcal{U}_+/{\mathcal{U}_b}\stackrel{\sim}\longrightarrow \mathcal{T}/{\mathcal{U}_{-}} \mbox{ and } \mathcal{U}_{-}/{\mathcal{U}_b}\stackrel{\sim}\longrightarrow \mathcal{T}/{\mathcal{U}_{+}}$$
are triangle equivalences.
\end{prop}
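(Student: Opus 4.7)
My plan is to first establish the semi-orthogonal decomposition $\mathcal{T}/\mathcal{U}_b = \langle \mathcal{U}_-/\mathcal{U}_b, \mathcal{U}_+/\mathcal{U}_b \rangle$, and then deduce both equivalences from the standard theory of semi-orthogonal decompositions. Since $\mathcal{U}_+$ and $\mathcal{U}_-$ are triangulated subcategories of $\mathcal{T}$ both containing $\mathcal{U}_b = \mathcal{U}_+ \cap \mathcal{U}_-$, the Verdier quotients $\mathcal{U}_\pm/\mathcal{U}_b$ embed naturally in $\mathcal{T}/\mathcal{U}_b$ as triangulated subcategories, so the two properties I need to verify are the $\mathrm{Hom}$-vanishing and the generation condition with respect to $\ast$.

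For the $\mathrm{Hom}$-vanishing, I will represent a morphism in $\mathcal{T}/\mathcal{U}_b$ from $X \in \mathcal{U}_+$ to $Y \in \mathcal{U}_-$ by a roof $X \xleftarrow{s} Z \xrightarrow{f} Y$ whose left leg has cone in $\mathcal{U}_b$. The triangle on $s$ places $Z$ in the triangulated subcategory $\mathcal{U}_+$, because both $X$ and the cone of $s$ lie there. Lemma~\ref{lem:factor} then factors $f$ as $f_2 \circ f_1$ through some $N \in \mathcal{U}_b$, and completing $f_2 \colon N \to Y$ to an exact triangle $N \to Y \xrightarrow{g} Y' \to \Sigma N$ produces $g$ whose cone $\Sigma N$ lies in $\mathcal{U}_b$ and satisfies $g \circ f = 0$; this exhibits the roof as the zero morphism. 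For the generation $(\mathcal{U}_+/\mathcal{U}_b) \ast (\mathcal{U}_-/\mathcal{U}_b) = \mathcal{T}/\mathcal{U}_b$, I will invoke \cite[Theorem~B]{JK} as suggested in the remark preceding the proposition; its hypotheses---that $\mathcal{U}_b$ is a triangulated subcategory and that $\mathcal{U}_+ \ast \mathcal{U}_- = \mathcal{T}$---are both available, the latter being the observation recorded inside the proof of Lemma~\ref{lem:factor}.

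Finally, to extract the two equivalences, I will appeal to the general fact that in a semi-orthogonal decomposition $\mathcal{C} = \langle \mathcal{Y}, \mathcal{X} \rangle$ the canonical functor $\mathcal{X} \hookrightarrow \mathcal{C} \to \mathcal{C}/\mathcal{Y}$ is a triangle equivalence. Combined with the iterated Verdier quotient identification $(\mathcal{T}/\mathcal{U}_b)/(\mathcal{U}_-/\mathcal{U}_b) \simeq \mathcal{T}/\mathcal{U}_-$, this yields the first equivalence $\mathcal{U}_+/\mathcal{U}_b \xrightarrow{\sim} \mathcal{T}/\mathcal{U}_-$, with the second being symmetric. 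The main obstacle in this plan is the generation step: without the external input of \cite[Theorem~B]{JK}, one would need to upgrade a raw pre-weight decomposition $A \in \mathcal{U}_{\geq 0} \ast \mathcal{U}_{\leq -1}$ into one with factors in $\mathcal{U}_+$ and $\mathcal{U}_-$ after passing to $\mathcal{T}/\mathcal{U}_b$, which is subtle and not obviously accessible from the axioms alone.
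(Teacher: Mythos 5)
Your proposal is correct and takes essentially the same route as the paper: the quotients $\mathcal{U}_{\pm}/\mathcal{U}_b$ embed as triangulated subcategories of $\mathcal{T}/\mathcal{U}_b$, the ${\rm Hom}$-vanishing is obtained by applying Lemma~\ref{lem:factor} to a roof representative (the paper uses the dual roof $X\rightarrow Y'\leftarrow Y$ and concludes directly from the factorization through $\mathcal{U}_b$), and the two equivalences follow from the Bondal--Kapranov fact \cite[Proposition~1.6]{BK} together with the standard iterated-quotient identification $(\mathcal{T}/\mathcal{U}_b)/(\mathcal{U}_-/\mathcal{U}_b)\simeq \mathcal{T}/\mathcal{U}_-$. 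The only divergence is the generation step, which you outsource to \cite[Theorem~B]{JK} and describe as otherwise subtle; in fact it is immediate and needs no external input: since $\mathcal{U}_+\ast\mathcal{U}_-=\mathcal{T}$, applying the exact quotient functor (which is the identity on objects) to a triangle $X\rightarrow A\rightarrow Y\rightarrow \Sigma(X)$ with $X\in\mathcal{U}_+$ and $Y\in\mathcal{U}_-$ already yields $(\mathcal{U}_+/\mathcal{U}_b)\ast(\mathcal{U}_-/\mathcal{U}_b)=\mathcal{T}/\mathcal{U}_b$, which is exactly how the paper argues.
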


\begin{proof}
Recall that both $\mathcal{U}_-$ and $\mathcal{U}_+$ are triangulated subcategories of $\mathcal{T}$. Therefore, both  $\mathcal{U}_-/{\mathcal{U}_b}$ and $\mathcal{U}_+/{\mathcal{U}_b}$ are triangulated subcategories of $\mathcal{T}/\mathcal{U}_b$. By Definition~\ref{defn:pws}(1), we infer that $\mathcal{U}_+\ast \mathcal{U}_{-}=\mathcal{T}$. This implies that
$$(\mathcal{U}_+/{\mathcal{U}_b})\ast (\mathcal{U}_+/{\mathcal{U}_b})=\mathcal{T}/{\mathcal{U}_b}.$$
For the required semi-orthogonal decomposition, it remains to prove that
$${\rm Hom}_{\mathcal{T}/{\mathcal{U}_b}}(X, Y)=0$$
 for any $X\in \mathcal{U}_+$ and $Y\in \mathcal{U}_{-}$. Indeed, any morphism $\theta\in {\rm Hom}_{\mathcal{T}/{\mathcal{U}_b}}(X, Y)$ is represented by a roof
 $$X\stackrel{f}\longrightarrow Y'\stackrel{s}\longleftarrow Y,$$ where the cone of $s$ lies in $\mathcal{U}_b$. It follows that $Y'$ belongs to $\mathcal{U}_{-}$. By Lemma~\ref{lem:factor}, the morphism $f$ factors through some object in $\mathcal{U}_+\cap \mathcal{U}_{-}=\mathcal{U}_b$. It follows that $\theta=0$ in $\mathcal{T}/{\mathcal{U}_b}$, as required.

We apply \cite[Proposition~1.6]{BK} to the obtained semi-orthogonal decomposition, and infer the equivalence on the left in the following identity.
$$\mathcal{U}_+/{\mathcal{U}_b}\simeq ({\mathcal{T}/{\mathcal{U}_{b}}})/({{\mathcal{U}_{-}/{\mathcal{U}_{b}}}}) \simeq  \mathcal{T}/{\mathcal{U}_{-}}.$$
The  equivalence on the right above is standard. This proves the equivalence $\mathcal{U}_+/{\mathcal{U}_b}\simeq  \mathcal{T}/{\mathcal{U}_{-}}$. The remaining one is proved similarly.
\end{proof}

\section{Examples}\label{sec:exm}

In this section, we study the canonical pre-weight structures on the homotopy category and  derived category of unbounded (cochain) complexes.

Let $\mathcal{A}$ be an additive category. A complex in $\mathcal{A}$ is usually denoted by $X=(X^n, d_X^n)_{n\in \mathbb{Z}}$, where the differentials $d_X^n\colon X^n\rightarrow X^{n+1}$ satisfy $d_X^{n+1}\circ d_X^n=0$. Denote by $\mathbf{K}(\mathcal{A})$ the  homotopy category of complexes in $\mathcal{A}$.

 For a complex $X$ and $n\in \mathbb{Z}$, we have the following brutal truncations:
 $$\sigma_{\geq n}(X)= \cdots \rightarrow 0\rightarrow X^n \stackrel{d_X^n}\rightarrow X^{n+1}\stackrel{d_X^{n+1}}\rightarrow X^{n+1}\rightarrow \cdots$$
 and
 $$\sigma_{<n}(X)=\cdots \rightarrow X^{n-3}\stackrel{d_X^{n-3}}\rightarrow X^{n-2}\stackrel{d_X^{n-2}}\rightarrow X^{n-1}\rightarrow 0\rightarrow \cdots.$$
 We have a canonical exact triangle in $\mathbf{K}(\mathcal{A})$:
 \begin{align}\label{tri:can}
 \sigma_{\geq n}(X)\xrightarrow{{\rm inc}} X\xrightarrow{{\rm pr}}  \sigma_{<n}(X)\longrightarrow \Sigma \sigma_{\geq n}(X).
 \end{align}
 Here, ``${\rm inc}$" and ``${\rm pr}$" denote the corresponding inclusion and projection, respectively.

 A complex $X$ is bounded-above (\emph{resp}., bounded-below) if $X^n=0$ for $n$ (\emph{resp}., $-n$) sufficiently large. A complex is bounded if it is both bounded-above and bounded-below. Denoted by $\mathbf{K}^{-}(\mathcal{A})$, $\mathbf{K}^{+}(\mathcal{A})$  and $\mathbf{K}^b(\mathcal{A})$ the full subcategories of $\mathbf{K}(\mathcal{A})$ formed by bounded-above, bounded-below and bounded complexes, respectively.

 The following terminology is convenient: a complex $X$ is non-negative (\emph{resp}. non-positive) if $X^n=0$ for $n<0$ (\emph{resp}. $n>0$).

We denote by $\mathcal{A}^\natural$ the \emph{idempotent completion} of $\mathcal{A}$. Its objects are pairs $(X, e)$ with $X\in \mathcal{A}$ and $e\colon X\rightarrow X$ satisfying $e^2=e$. A morphism $f\colon (X, e)\rightarrow (X', e')$ is given by a morphism $f\colon X\rightarrow X'$ satisfying $f=e'\circ f\circ e$. The composition of morphisms in $\mathcal{A}^\natural$ is induced by the one in $\mathcal{A}$. There is a fully faithful functor $\iota\colon \mathcal{A}\rightarrow \mathcal{A}^\natural$ sending $X$ to $(X, {\rm Id}_X)$, which is dense if and only if $\mathcal{A}$ is idempotent-complete.

Moreover, the functor $\iota$ induces a triangle equivalence
\begin{align}\label{equ:iota}
\mathbf{K}(\iota)\colon \mathbf{K}(\mathcal{A})\longrightarrow \mathbf{K}(\mathcal{A}^\natural),
\end{align}
which restricts to equivalences $\mathbf{K}^{-}(\mathcal{A})\simeq \mathbf{K}^{-}(\mathcal{A}^\natural)$ and $\mathbf{K}^{+}(\mathcal{A})\simeq \mathbf{K}^{+}(\mathcal{A}^\natural)$. For example, the object $(X, e)\in \mathcal{A}^\natural$, viewed as a stalk complex concentrated in degree zero, is identified with the following non-positive complex with the rightmost $X$ in degree zero
\begin{align}\label{equ:non-pos}
\cdots \longrightarrow X \stackrel{{\rm Id}_X-e} \longrightarrow X\stackrel{e} \longrightarrow X\stackrel{{\rm Id}_X-e}\longrightarrow X\longrightarrow 0\longrightarrow \cdots
\end{align}
in $\mathbf{K}(\mathcal{A})$. We mention that this complex is also isomorphic to the following non-negative complex
\begin{align}\label{equ:non-neg}
\cdots \longrightarrow 0 \longrightarrow X \stackrel{{\rm Id}_X-e} \longrightarrow X\stackrel{e} \longrightarrow X\stackrel{{\rm Id}_X-e}\longrightarrow X\longrightarrow \cdots
\end{align}
in $\mathbf{K}(\mathcal{A})$.

\begin{exm}\label{exm:K}
{\rm Let $\mathcal{A}$ be an additive category. Denote by $\mathcal{K}_{\geq 0}$ (\emph{resp}. $\mathcal{K}_{\leq 0}$) the full subcategory of $\mathbf{K}(\mathcal{A})$ formed by those complexes that are isomorphic to  some non-negative (\emph{resp.} non-positive) complexes.  In view of (\ref{tri:can}), we infer that $(\mathcal{K}_{\geq 0},\mathcal{K}_{\leq 0})$ is a pre-weight structure of dimension $0$, called the \emph{canonical pre-weight structure} on $\mathbf{K}(\mathcal{A})$. Indeed, it is a weight structure.

It is clear that $\mathcal{A}$ is contained in the core $\mathcal{K}_{\geq 0}\cap \mathcal{K}_{\leq 0}$. On the other hand, any object $X\in \mathcal{K}_{\geq 0}\cap \mathcal{K}_{\leq 0}$ is isomorphic to a direct summand of some object in $\mathcal{A}$. Indeed, we might assume that $X$ is non-negative. Consider the canonical triangle
$$ \sigma_{\geq 1}(X)\xrightarrow{{\rm inc}} X\xrightarrow{{\rm pr}}  X^0 \longrightarrow \Sigma \sigma_{\geq 1}(X).$$
Since $X$ belongs to $\mathcal{K}_{\leq 0}$, we infer that ``${\rm inc}$" is zero. Therefore, the morphism ``${\rm pr}$" is a split monomorphism. In view of the equivalence (\ref{equ:iota}), we infer an equivalence
\begin{align}\label{equ:core-K}
\mathcal{A}^\natural\simeq \mathcal{K}_{\geq 0}\cap \mathcal{K}_{\leq 0}
\end{align}
sending $(X, e)$ to the complex in (\ref{equ:non-pos}), which is also isomorphic to the one in (\ref{equ:non-neg}).

With respect to the pre-weight structure $(\mathcal{K}_{\geq 0},\mathcal{K}_{\leq 0})$ , we identify $\mathcal{K}_+$ with $\mathbf{K}^+(\mathcal{A})$, and $\mathcal{K}_-$ with $\mathbf{K}^-(\mathcal{A})$. We have $\mathbf{K}^b(\mathcal{A})\subseteq \mathcal{K}_b$. Indeed, in view of (\ref{equ:iota}) and (\ref{equ:core-K}), we have an equivalence
$$\mathcal{K}_b \simeq \mathbf{K}^b(\mathcal{A}^\natural),$$
which is further equivalent to $\mathbf{K}^b(\mathcal{A})^\natural$, the idempotent completion of $\mathbf{K}^b(\mathcal{A})$; see \cite[Proposition~3.4]{BN}. In particular, we can identify the two Verdier quotient categories: $$\mathbf{K}(\mathcal{A})/{\mathcal{K}_b}=\mathbf{K}(\mathcal{A})/{\mathbf{K}^b(\mathcal{A})}.$$ Consequently, Proposition~\ref{prop:semi-ortho} allows us to recover the semi-orthogonal decomposition
$$\mathbf{K}(\mathcal{A})/{\mathbf{K}^b(\mathcal{A})}=\langle \mathbf{K}^{-}(\mathcal{A})/{\mathbf{K}^b(\mathcal{A})}, \mathbf{K}^{+}(\mathcal{A})/{\mathbf{K}^b(\mathcal{A})}\rangle$$
in \cite{Chen-Wang}.
}
\end{exm}

We assume that the additive category $\mathcal{A}$ has arbitrary coproducts and products. Consequently, $\mathbf{K}(\mathcal{A})$ is bi-complete, whose coproducts and products are componentwise.

Assume that $\mathcal{T}$ is a bi-complete triangulated category. A triangulated subcategory of $\mathcal{T}$ is called \emph{localizing} (resp., \emph{colocalizing}) if it is closed under arbitrary coproducts (resp., arbitrary products); see \cite[Section~1]{BN}. For a class $\mathcal{S}$ of objects, we denote by ${\rm Loc}\langle \mathcal{S} \rangle$ (resp., ${\rm coLoc}\langle \mathcal{S}\rangle$) the smallest localizing (resp., colocalizing) subcategory of $\mathcal{T}$ containing $\mathcal{S}$.

The following result is standard.

\begin{lem}\label{lem:can-pws-K}
Assume that the additive category $\mathcal{A}$ has arbitrary coproducts and products. Consider the canonical pre-weight structure $(\mathcal{K}_{\geq 0}, \mathcal{K}_{\leq 0})$ on $\mathbf{K}(\mathcal{A})$. Then the following statements hold.
\begin{enumerate}
\item The pre-weight structure $(\mathcal{K}_{\geq 0}, \mathcal{K}_{\leq 0})$ is two-sided convergent.
\item $\mathcal{K}_{\leq 0}\subseteq {\rm Loc}\langle \mathcal{K}_b \rangle$ and $\mathcal{K}_{\geq 0}\subseteq {\rm coLoc}\langle \mathcal{K}_b \rangle$.
\end{enumerate}
\end{lem}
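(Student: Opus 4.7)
The plan is to construct convergent towers explicitly using brutal truncations. For part~(1), given $X \in \mathcal{K}_{\leq 0}$ we may assume $X$ is a non-positive complex; set $X_n = \sigma_{\geq -n}(X)$, which is concentrated in degrees $[-n, 0]$ and hence belongs to $\mathcal{K}_{[-n, 0]}$. Take $\iota_n \colon X_n \to X$ to be the canonical inclusion and $\phi_n \colon X_n \to X_{n+1}$ the inclusion that adjoins $X^{-n-1}$ in degree $-n-1$. The canonical triangle \eqref{tri:can} identifies the cone of $\iota_n$ with $\sigma_{<-n}(X) \in \mathcal{K}_{\leq -n-1}$, while the cone of $\phi_n$ is the stalk complex $X^{-n-1}$ concentrated in degree $-n-1$, which lies in $\Sigma^{n+1}(\mathcal{C})$. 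Hence $(\iota_n, \phi_n)_{n\geq 0}$ is a left tower of $X$ in the sense of Proposition~\ref{prop:tower}(1). For convergence, observe that $1 - \phi \colon \coprod_{n\geq 0} X_n \to \coprod_{n\geq 0} X_n$ is degreewise a split monomorphism: in each degree $k$ the summands $X_n^k$ stabilize to $X^k$ for $n \geq -k$, and the map is triangular with identities on the diagonal. Its mapping cone in $\mathbf{K}(\mathcal{A})$ therefore agrees with the degreewise cokernel, which is canonically isomorphic to $X$, and the resulting $\kappa \colon {\rm hocolim}\; X_n \to X$ satisfies $\kappa \circ \gamma_n = \iota_n$ by construction. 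A dual argument with $Y_n = \sigma_{\leq n}(Y)$ produces a convergent right tower for $Y \in \mathcal{K}_{\geq 0}$, yielding two-sided convergence.

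For part~(2), each $X_n = \sigma_{\geq -n}(X)$ is a bounded complex and so $X_n \in \mathbf{K}^b(\mathcal{A}) \subseteq \mathcal{K}_b$ by Example~\ref{exm:K}. The exact triangle
$$\coprod_{n\geq 0} X_n \xrightarrow{1-\phi} \coprod_{n\geq 0} X_n \longrightarrow X \longrightarrow \Sigma\Bigl(\coprod_{n\geq 0} X_n\Bigr)$$
exhibits $X$ as the cone of a morphism between coproducts of objects of $\mathcal{K}_b$, so $X \in {\rm Loc}\langle \mathcal{K}_b\rangle$ because localizing subcategories are closed under coproducts and exact triangles. Dually, the bounded truncations $Y_n = \sigma_{\leq n}(Y)$ together with the exact triangle \eqref{tri:right-tower} show $Y \in {\rm coLoc}\langle \mathcal{K}_b\rangle$ for any $Y \in \mathcal{K}_{\geq 0}$.

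The only mildly technical point is the identification of ${\rm hocolim}\; X_n$ with $X$; once the degreewise split structure of $1 - \phi$ is noted, the triangulated cone collapses to the componentwise cokernel, which is manifestly $X$. This mirrors the familiar presentation of a complex as the (filtered) colimit of its bounded truncations, so the whole argument is standard; the remainder of the statement follows from general closure properties of localizing and colocalizing subcategories.
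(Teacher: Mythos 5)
Your proof is correct and follows essentially the same route as the paper: brutal truncations $\sigma_{\geq -n}(X)$ (resp.\ $\sigma_{\leq n}(Y)$) give the towers, and the componentwise-split telescope sequence identifies $X$ with ${\rm hocolim}\,X_n$, yielding both convergence and $X\in{\rm Loc}\langle\mathcal{K}_b\rangle$ at once (dually for the right-hand statements). Your extra remarks on the degreewise split structure of $1-\phi$ simply make explicit what the paper asserts when it calls the sequence componentwise split.
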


\begin{proof}
Take any object $X\in \mathcal{K}_{\leq 0}$. We may assume that $X$ is non-positive. The inclusions $\sigma_{\geq -n}(X)\rightarrow X$  and ${\rm inc}_n\colon \sigma_{\geq -n}(X)\rightarrow \sigma_{\geq -n-1}(X)$ form a left tower of $X$. Moreover, we have  an isomorphism
$${\rm colim}_{n\geq 0}\;\sigma_{\geq -n}(X)\simeq X.$$
The following componentwise-split short exact sequence of complexes
$$ 0\longrightarrow \prod_{n\geq 0}\sigma_{\geq -n}(X) \stackrel{1-{\rm inc}}\longrightarrow \prod_{n\geq 0}\sigma_{\geq -n}(X)\longrightarrow X\longrightarrow 0$$
allows us to identify $X$ with  the homotopy colimit ${\rm hocolim}\; \sigma_{\geq -n}(X)$. Then we infer that the left tower above of $X$ is convergent and $X\in {\rm Loc}\langle \mathcal{K}_b\rangle$. Therefore, the pre-weight structure is left convergent and $\mathcal{K}_{\leq 0}\subseteq {\rm Loc}\langle \mathcal{K}_b \rangle$. The remaining statements are proved dually.
\end{proof}

In what follows, we assume that $\mathcal{A}$ is an abelian category. Denote by $\mathbf{D}(\mathcal{A})$ the derived category of complexes in $\mathcal{A}$. By identifying any object in $\mathcal{A}$ with the corresponding stalk complex concentrated in degree zero, we always view $\mathcal{A}$ as a full subcategory of $\mathbf{D}(\mathcal{A})$.

Denote by $\mathbf{D}^{-}(\mathcal{A})$, $\mathbf{D}^{+}(\mathcal{A})$ and $\mathbf{D}^{b}(\mathcal{A})$ the derived categories of bounded-above, bounded-below and bounded complexes, respectively. All these categories are naturally viewed as triangulated subcategories of $\mathbf{D}(\mathcal{A})$.

\begin{exm}\label{exm:D}
{\rm
Denote by $\mathcal{D}_{\geq 0}$ (\emph{resp}. $\mathcal{D}_{\leq 0}$) the full subcategory of $\mathbf{D}(\mathcal{A})$ consisting of complexes that are isomorphic to non-negative complexes (\emph{resp}. non-positive complexes). Using the same exact triangles (\ref{tri:can}), we infer that $(\mathcal{D}_{\geq 0}, \mathcal{D}_{\leq 0})$ is a pre-weight structure on $\mathbf{D}(\mathcal{A})$, called the \emph{canonical pre-weight structure} on $\mathbf{D}(\mathcal{A})$.

The core of $(\mathcal{D}_{\geq 0}, \mathcal{D}_{\leq 0})$  is naturally identified with $\mathcal{A}$. Moreover, we identify $\mathcal{D}_{-}$ (\emph{resp}. $\mathcal{D}_{+}$ and  $\mathcal{D}_b$ ) with $\mathbf{D}^{-}(\mathcal{A})$ (\emph{resp}. $\mathbf{D}^{+}(\mathcal{A})$  and $\mathbf{D}^b(\mathcal{A})$).

We apply Proposition~\ref{prop:semi-ortho} to  $(\mathcal{D}_{\geq 0}, \mathcal{D}_{\leq 0})$, and obtain the following semi-orthogonal decomposition
$$\mathbf{D}(\mathcal{A})/{\mathbf{D}^b(\mathcal{A})}=\langle \mathbf{D}^-(\mathcal{A})/{\mathbf{D}^b(\mathcal{A})}, \mathbf{D}^+(\mathcal{A})/{\mathbf{D}^b(\mathcal{A})}\rangle.$$
We mention that this decomposition is orthogonal. Indeed, it is well known that $(\mathcal{D}_{\leq 0}, \mathcal{D}_{\geq 0})$ is a t-structure on $\mathbf{D}(\mathcal{A})$; see  \cite[Section~1.3]{BBD}. In particular, we have $\mathbf{D}^-(\mathcal{A})\ast \mathbf{D}^+(\mathcal{A})=\mathbf{D}(\mathcal{A})$. Applying \cite[Theorem~B]{JK}, we have another semi-orthogonal decomposition
$$\mathbf{D}(\mathcal{A})/{\mathbf{D}^b(\mathcal{A})}=\langle \mathbf{D}^+(\mathcal{A})/{\mathbf{D}^b(\mathcal{A})}, \mathbf{D}^-(\mathcal{A})/{\mathbf{D}^b(\mathcal{A})}\rangle.$$
This implies that both decompositions above are orthogonal.
}
\end{exm}

Recall that the \emph{global dimension} of $\mathcal{A}$, denoted by ${\rm gl.dim}(\mathcal{A})$, is defined to the supremum of $\{n\geq 0\; |\; {\rm Ext}_\mathcal{A}^n(-, -) \neq 0\}$.

\begin{lem}\label{lem:can-pws-D-fd}
The dimension of the canonical pre-weight structure $(\mathcal{D}_{\geq 0}, \mathcal{D}_{\leq 0})$ on $\mathbf{D}(\mathcal{A})$ is at least ${\rm gl.dim}(\mathcal{A})$. If $\mathcal{A}$ has enough projectives or injectives, the two dimensions equal.
\end{lem}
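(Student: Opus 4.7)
The statement amounts to two inequalities: the lower bound $\mathrm{dim} \geq \mathrm{gl.dim}(\mathcal{A})$ holds without any extra hypothesis, and the upper bound $\mathrm{dim} \leq \mathrm{gl.dim}(\mathcal{A})$ holds when $\mathcal{A}$ has enough projectives or enough injectives.

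For the lower bound, recall from Example~\ref{exm:D} that the core of the canonical pre-weight structure is $\mathcal{A}$, and that for $M, N \in \mathcal{A}$ the standard Yoneda identification reads
$$\mathrm{Hom}_{\mathbf{D}(\mathcal{A})}(M, \Sigma^n N) \;\cong\; \mathrm{Ext}^n_{\mathcal{A}}(M, N)$$
for every $n \geq 0$. Choose $n = \mathrm{gl.dim}(\mathcal{A})$ when finite (or an arbitrary $n \geq 0$ in the infinite case) and pick $M, N$ with $\mathrm{Ext}^n_{\mathcal{A}}(M, N) \neq 0$. Since $M \in \mathcal{D}_{\geq 0}$ and $\Sigma^n N$ is a stalk complex concentrated in degree $-n$, lying in $\mathcal{D}_{\leq -n}$, this exhibits a nonzero element of $\mathrm{Hom}(\mathcal{D}_{\geq 0}, \mathcal{D}_{\leq -n})$. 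The very definition of the dimension then forces $\mathrm{dim} \geq n$, whence $\mathrm{dim} \geq \mathrm{gl.dim}(\mathcal{A})$.

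For the upper bound, assume $g := \mathrm{gl.dim}(\mathcal{A}) < \infty$ (otherwise nothing to prove) and that $\mathcal{A}$ has enough projectives; the case of enough injectives is dual. Given $X \in \mathcal{D}_{\geq 0}$ and $Y \in \mathcal{D}_{\leq -(g+1)}$, choose complex representatives with $X^k = 0$ for $k < 0$ and $Y^k = 0$ for $k > -(g+1)$. Since every object of $\mathcal{A}$ has projective dimension at most $g$, a Cartan--Eilenberg resolution of $X$ yields a double complex $P^{p, q}$ of projectives supported in $p \geq 0$ and $-g \leq q \leq 0$, whose total complex $P$ satisfies $P^k = 0$ for $k < -g$ and is quasi-isomorphic to $X$. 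Granted that $P$ is K-projective, one has $\mathrm{Hom}_{\mathbf{D}(\mathcal{A})}(X, Y) \cong \mathrm{Hom}_{\mathbf{K}(\mathcal{A})}(P, Y)$. A cochain map $f \colon P \to Y$ is determined by its components $f^k \colon P^k \to Y^k$, but the constraints $P^k = 0$ for $k < -g$ and $Y^k = 0$ for $k > -(g+1)$ leave no admissible $k$, since $-g > -(g+1)$. Hence $f = 0$, giving $\mathrm{Hom}_{\mathbf{D}(\mathcal{A})}(X, Y) = 0$ and therefore $\mathrm{dim} \leq g$.

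The main technical obstacle is the K-projectivity of the Cartan--Eilenberg total complex $P$, which is bounded below but unbounded above: in general, bounded-below complexes of projectives are \emph{not} K-projective. The remedy uses the finite projective dimension. One option is to invoke Spaltenstein's theorem on K-projective resolutions in categories with enough projectives. Another is to realize $P$, up to quasi-isomorphism, as the homotopy colimit of its bounded brutal truncations---each a bounded complex of projectives, hence K-projective---and use that homotopy colimits, being built from coproducts and cones, preserve K-projectivity. The enough-injectives case is handled dually: a K-injective resolution of $Y$ concentrated in degrees $\leq -1$ (obtained from a Cartan--Eilenberg construction using injective dimensions bounded by $g$) yields the same degree-mismatch contradiction when paired with $X$.
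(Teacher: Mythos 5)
Your lower-bound argument is exactly the paper's one-line observation (the Yoneda identification $\mathrm{Hom}_{\mathbf{D}(\mathcal{A})}(M,\Sigma^n N)\cong \mathrm{Ext}^n_{\mathcal{A}}(M,N)$ applied to stalk complexes), so that half is fine. For the second half the paper simply quotes \cite[Proposition~3.6(2)]{CLZ}, whereas you attempt a direct Cartan--Eilenberg argument; the skeleton (a projective replacement $P\simeq X$ supported in degrees $\geq -g$, then a degree count against $Y$ supported in degrees $\leq -(g+1)$) is reasonable, but the pivotal step --- that this $P$ computes $\mathrm{Hom}_{\mathbf{D}(\mathcal{A})}(X,Y)$, i.e.\ is K-projective --- is not established by either remedy you offer. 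Remedy (b) is based on a false description: your $P$ is bounded \emph{below} and unbounded above, so its bounded brutal truncations are quotient complexes and $P$ is their homotopy \emph{limit}, not colimit; the hocolim-of-truncations device, and the preservation of K-projectivity under coproducts and cones, applies to the opposite boundedness (bounded-above complexes of projectives), while K-projectivity is not inherited from products or homotopy limits --- indeed bounded-below complexes of projectives are in general not K-projective. Remedy (a) is insufficient as stated: Spaltenstein-type theorems give the existence of \emph{some} K-projective resolution, but not one supported in degrees $\geq -g$, so it does not feed your degree-counting argument.

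What is actually needed is a second, essential use of the finiteness of the global dimension: when $\mathrm{gl.dim}(\mathcal{A})\leq g$ and $\mathcal{A}$ has enough projectives, every acyclic complex of projectives is contractible (each cocycle object is a $g$-th syzygy of a later cocycle, hence projective, so the complex splits), and consequently every complex of projectives --- in particular your $P$ --- is K-projective: compare $P$ with a K-projective resolution having projective components and observe that the cone is an acyclic complex of projectives, hence contractible. With that supplement (and its dual for enough injectives, since a bounded-above complex of injectives is likewise not K-injective in general) your degree argument closes and does give a self-contained proof of the equality; as written, however, the justification of the key step is missing, and the cheapest repair is the one the paper takes, namely invoking \cite[Proposition~3.6(2)]{CLZ}.
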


\begin{proof}
We observe that if ${\rm Ext}_\mathcal{A}^n(-, -) \neq 0$, we have ${\rm Hom}_{\mathbf{D}(\mathcal{A})}(\mathcal{D}_{\geq 0}, \mathcal{D}_{\leq -n})\neq 0$. This observation implies the first statement. The second one follows immediately from \cite[Proposition~3.6(2)]{CLZ}.
\end{proof}

We assume that the abelian category $\mathcal{A}$ has arbitrary coproducts and products such that coproducts and products of any exact sequence are exact. By \cite[Example~1.6]{BN}, $\mathbf{D}(\mathcal{A})$ is bi-complete, whose coproducts and products are componentwise.

The same proof with Lemma~\ref{lem:can-pws-K} yields the following result.

\begin{lem}\label{lem:can-pws-D}
Assume that the abelian category $\mathcal{A}$ has arbitrary coproducts and products such that coproducts and products of any exact sequence are exact. Consider the canonical pre-weight structure $(\mathcal{D}_{\geq 0}, \mathcal{D}_{\leq 0})$ on $\mathbf{D}(\mathcal{A})$. Then the following statements hold.
\begin{enumerate}
\item The pre-weight structure $(\mathcal{D}_{\geq 0}, \mathcal{D}_{\leq 0})$ is two-sided convergent.
\item $\mathcal{D}_{\leq 0}\subseteq {\rm Loc}\langle \mathcal{D}_b \rangle$ and $\mathcal{D}_{\geq 0}\subseteq {\rm coLoc}\langle \mathcal{D}_b \rangle$. \hfill $\square$
\end{enumerate}
\end{lem}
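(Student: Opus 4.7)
The plan is to mimic the proof of Lemma~\ref{lem:can-pws-K} while being careful at the one place where the passage from $\mathbf{K}(\mathcal{A})$ to $\mathbf{D}(\mathcal{A})$ is not automatic, namely the need to convert a componentwise-split short exact sequence of complexes into an exact triangle in the derived category. This is precisely where the exactness assumption on coproducts and products is used.

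First I would prove statement (1) for $\mathcal{D}_{\leq 0}$, the left convergence. Take $X\in \mathcal{D}_{\leq 0}$ and, after replacing $X$ by an isomorphic complex, assume that $X$ is non-positive. The brutal truncations $\sigma_{\geq -n}(X)$ together with the inclusions $\iota_n\colon \sigma_{\geq -n}(X)\rightarrow X$ and $\phi_n\colon \sigma_{\geq -n}(X)\rightarrow \sigma_{\geq -n-1}(X)$ form a candidate left tower, since each $\sigma_{\geq -n}(X)\in \mathcal{D}_{[-n,0]}$ lies in $\mathcal{D}_b$, the cone of $\iota_n$ lies in $\mathcal{D}_{\leq -n-1}$, and the cone of $\phi_n$ is (isomorphic to) the stalk complex $X^{-n-1}[n+1]\in \Sigma^{n+1}(\mathcal{A})$. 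To see that this tower is convergent, consider the short exact sequence of complexes
\[
0\longrightarrow \coprod_{n\geq 0}\sigma_{\geq -n}(X)\xrightarrow{1-\phi} \coprod_{n\geq 0}\sigma_{\geq -n}(X)\longrightarrow X\longrightarrow 0,
\]
which is componentwise split (after expressing the coproduct componentwise, each component is a split telescope computing the colimit). By the hypothesis that coproducts of exact sequences are exact, this sequence is exact in the category of complexes in $\mathcal{A}$, and a componentwise-split exact sequence of complexes always induces an exact triangle in $\mathbf{D}(\mathcal{A})$. Comparing with the defining triangle of ${\rm hocolim}\;\sigma_{\geq -n}(X)$, we obtain an isomorphism ${\rm hocolim}\;\sigma_{\geq -n}(X)\simeq X$ compatible with the structure morphisms $\gamma_n$ and $\iota_n$. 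Hence the tower is convergent in the sense of Definition~\ref{defn:left-conv}, and this also shows $X\in {\rm Loc}\langle \mathcal{D}_b\rangle$ since $X$ is obtained as a homotopy colimit of objects in $\mathcal{D}_b$ and hence lies in the smallest localizing subcategory they generate.

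For the dual half, take $Y\in \mathcal{D}_{\geq 0}$, assume $Y$ is non-negative, and use the brutal truncations $\sigma_{<n+1}(Y)$ with the projections $\pi_n\colon Y\rightarrow \sigma_{<n+1}(Y)$ and $\psi_n\colon \sigma_{<n+2}(Y)\rightarrow \sigma_{<n+1}(Y)$. The analogous componentwise-split short exact sequence
\[
0\longrightarrow Y\longrightarrow \prod_{n\geq 0}\sigma_{<n+1}(Y)\xrightarrow{1-\psi} \prod_{n\geq 0}\sigma_{<n+1}(Y)\longrightarrow 0
\]
is exact by the hypothesis on products of exact sequences, hence gives an exact triangle in $\mathbf{D}(\mathcal{A})$, identifying $Y$ with ${\rm holim}\;\sigma_{<n+1}(Y)$ in a way compatible with the $\pi_n$. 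This proves right convergence and simultaneously gives $Y\in {\rm coLoc}\langle \mathcal{D}_b\rangle$.

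The only real obstacle, and the reason the argument is not literally identical to that of Lemma~\ref{lem:can-pws-K}, is verifying that the short exact sequences above descend to exact triangles in $\mathbf{D}(\mathcal{A})$; in $\mathbf{K}(\mathcal{A})$ a componentwise-split exact sequence trivially yields a triangle, but in $\mathbf{D}(\mathcal{A})$ one needs the coproduct and product of exact sequences of complexes to remain exact, which is exactly the standing hypothesis on $\mathcal{A}$. Everything else, in particular the identification of the cones of $\iota_n$, $\phi_n$, $\pi_n$, $\psi_n$ and the verification of the tower conditions in Proposition~\ref{prop:tower}, is formally the same as in Example~\ref{exm:K}.
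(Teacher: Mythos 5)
Your proof is correct and takes essentially the same approach as the paper, which simply invokes ``the same proof as Lemma~\ref{lem:can-pws-K}'' and leaves the details to the reader; your execution of the two truncation towers and the (co)telescope sequences is exactly that argument. One small calibration: the exactness hypothesis is not really needed to turn the componentwise-split short exact sequences into exact triangles (any short exact sequence of complexes yields a triangle in $\mathbf{D}(\mathcal{A})$); its actual role, recorded just before the lemma via \cite[Example~1.6]{BN}, is to ensure that $\mathbf{D}(\mathcal{A})$ is bi-complete with coproducts and products computed componentwise, so that your chain-level coproduct and product may legitimately be compared with the defining triangles of ${\rm hocolim}$ and ${\rm holim}$ in $\mathbf{D}(\mathcal{A})$.
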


\section{The action on objects and fully-faithfulness}\label{sec:triv-act}

In this section, we fix a triangulated category $\mathcal{T}$, which is endowed with a pre-weight structure $(\mathcal{U}_{\geq 0}, \mathcal{U}_{\leq 0})$. Denote by $\mathcal{C}=\mathcal{U}_{\geq 0}\cap \mathcal{U}_{\leq 0}$ its core and by $\mathcal{U}_b$ its bounded part.

We prove that a triangle endofunctor on $\mathcal{T}$ acts trivially on objects, provided that its restriction to the core is isomorphic to the identity functor; see Theorem~\ref{thm:trivial-action}. We prove that  a certain triangle functor starting from $\mathcal{T}$ is fully faithful, provided that so is its restriction to $\mathcal{U}_b$; see Theorem~\ref{thm:extend-ff}. These two theorems will be crucial in the criterion on when an endofunctor is isomorphic to a pseudo-identity; see Section~\ref{sec:ps}.

Let $\mathcal{T}'$ be another triangulated category. Recall that a \emph{triangle functor} $(F, \omega)\colon \mathcal{T}\rightarrow \mathcal{T}'$ consists of an additive functor $F\colon \mathcal{T}\rightarrow \mathcal{T}'$ and a natural isomorphism $\omega\colon F\Sigma\rightarrow \Sigma F$, called the \emph{connecting isomorphism},  such that any exact triangle $X\stackrel{a}\rightarrow Y\stackrel{b}\rightarrow Z\stackrel{c}\rightarrow \Sigma(X)$ in $\mathcal{T}$ is sent to an exact triangle
$F(X)\stackrel{F(a)}\rightarrow F(Y)\stackrel{F(b)}\rightarrow F(Z)\xrightarrow{\omega_X\circ F(c)} \Sigma F(X)$ in $\mathcal{T}'$. For each $n\geq 1$, we define a natural isomorphism $\omega^n\colon F\Sigma^n\rightarrow \Sigma^nF$ inductively such that $\omega^1=\omega$ and $\omega^{n+1}=\Sigma(\omega^n)\circ \omega{\Sigma^n}$. We set $\omega^0={\rm Id}_F$.

When the natural isomorphism $\omega$ is understood or not needed, we also call $F$ a triangle functor. For example, we always understand the identity triangle endofunctor ${\rm Id}_\mathcal{T}$ as the pair $({\rm Id}_\mathcal{T}, {\rm Id}_\Sigma)$.

\subsection{Trivial actions on objects}

In this subsection, we fix a triangle endofunctor  $(F, \omega)\colon \mathcal{T}\rightarrow \mathcal{T}$ such that for each object $A\in \mathcal{C}$, there is an isomorphism $\eta_A\colon F(A)\rightarrow A$; moreover, $\eta_A$ is natural in $A\in \mathcal{C}$.

The following result is inspired by \cite[Proposition~7.1]{Ric89} and \cite[Proposition~3.2]{CY}.

\begin{prop}\label{prop:object-b}
 Assume that ${\rm Hom}_\mathcal{T}(\mathcal{C}, \Sigma^{-n}(\mathcal{C}))=0$ for any $n\geq 1$. Then for each object $X\in \mathcal{U}_b$, $F(X)$ is isomorphic to $X$.
\end{prop}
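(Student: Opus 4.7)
The plan is to induct on the length $\ell = n - m$ for $X \in \mathcal{U}_{[m,n]}$; by Lemma~\ref{lem:pws}(6), $\mathcal{U}_b = \bigcup_{m\leq n}\mathcal{U}_{[m,n]}$, so this exhausts $\mathcal{U}_b$. For the base case $\ell = 0$, write $X \simeq \Sigma^{-m}A$ with $A \in \mathcal{C}$, and compose the iterated connecting isomorphism $\omega^{-m}\colon F\Sigma^{-m} \xrightarrow{\sim} \Sigma^{-m} F$ with $\Sigma^{-m}\eta_A$ to produce the desired isomorphism $F(X) \xrightarrow{\sim} X$. Note that this isomorphism is natural in $A \in \mathcal{C}$ by naturality of $\eta$ and $\omega$, so I can refer to it as a canonical iso $\tilde\eta_X$ on the shifted core.

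For the inductive step, I would apply Lemma~\ref{lem:pws}(2) to obtain an exact triangle
\[
X' \xrightarrow{a} X \xrightarrow{b} B \xrightarrow{c} \Sigma X'
\]
with $X' \in \mathcal{U}_{[m+1,n]}$ (of length $\ell - 1$) and $B \in \Sigma^{-m}\mathcal{C}$. The inductive hypothesis supplies an iso $\alpha\colon F(X') \xrightarrow{\sim} X'$, and the base case supplies $\beta\colon F(B) \xrightarrow{\sim} B$, which I would fix as the canonical iso $\tilde\eta_B$ above. Applying $F$ to the triangle and invoking TR3 together with the five-lemma, it suffices to arrange that
\[
c \circ \beta \;=\; \Sigma\alpha \circ \omega_{X'} \circ F(c).
\]
Once this square commutes, TR3 fills in a morphism of triangles $F(X) \to X$ extending $\alpha$ and $\beta$, and the five-lemma promotes it to an isomorphism.

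The main obstacle is arranging this commutativity. With $\beta$ fixed, the discrepancy $\delta := c \circ \beta - \Sigma\alpha \circ \omega_{X'} \circ F(c) \in {\rm Hom}_\mathcal{T}(F(B), \Sigma X')$ must be absorbed by modifying $\alpha$ within its ${\rm Aut}(F(X'))$-coset. The crucial input is the vanishing hypothesis ${\rm Hom}_\mathcal{T}(\mathcal{C}, \Sigma^{-n}\mathcal{C})=0$ for $n \geq 1$ and its shifted consequence Corollary~\ref{cor:vanishing}. Combined with the identity $F(b)\circ F(a) = 0$ inherited from $b \circ a = 0$ and a careful analysis of the long exact ${\rm Hom}_\mathcal{T}(F(B),-)$-sequence obtained from the original triangle, this should provide enough vanishing to realize $\delta$ in the image of the map $\phi \mapsto \Sigma\phi \circ \omega_{X'} \circ F(c)$ from ${\rm Aut}(F(X'))$, so that replacing $\alpha$ by $\alpha \circ \phi^{-1}$ yields the commuting square and completes the induction. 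Making this accounting of the obstruction precise, rather than the induction itself, is where the real work lies.
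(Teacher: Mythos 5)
Your setup (peeling off the lowest-weight core piece via Lemma~\ref{lem:pws}(2), base case on shifted core objects via $\omega$ and $\eta$, then TR3 plus the two-out-of-three property) matches the paper's strategy, but the proposal stops exactly at the point where the proposition actually has content: you never prove that the right-hand square can be made to commute, and the mechanism you sketch for it does not work as stated. The discrepancy $\delta$ lives in ${\rm Hom}_\mathcal{T}(F(B), \Sigma X')\simeq {\rm Hom}_\mathcal{T}(\mathcal{C}, \mathcal{U}_{[0,\ell-1]})$ (up to shift), which is a \emph{positive}-weight Hom group; the hypothesis only kills negative-weight maps (Corollary~\ref{cor:vanishing}), so $\delta$ has no reason to vanish. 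Your plan to absorb it by replacing $\alpha$ with $\alpha\circ\phi^{-1}$ requires $c\circ\beta$ to lie in the set $\{\Sigma(\alpha\phi)\circ\omega_{X'}\circ F(c)\mid \phi\in {\rm Aut}(F(X'))\}$, i.e.\ you need to realize a prescribed morphism through data of the form $F(c)$ composed with automorphisms of $F(X')$. Since $F$ is not assumed full (its behaviour on morphisms is only controlled on the core, via $\eta$), there is no a priori way to produce such a $\phi$; the identity $F(b)\circ F(a)=0$ and the long exact sequence you invoke do not supply it. So this is a genuine gap, not a routine verification left to the reader.

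The paper's resolution is to strengthen the induction hypothesis rather than to correct the isomorphism afterwards: using the tower of Corollary~\ref{cor:ast}, one constructs $\theta_i\colon F(X_i)\to X_i$ \emph{together with} the compatibility $\pi_i\circ\theta_i=\Sigma^i(\eta_{A_{i-1}})\circ\omega^i_{A_{i-1}}\circ F(\pi_i)$, where $\pi_i$ is the projection onto the most recently attached core piece. With this extra datum, the commutativity of the new square is checked by postcomposing with $\Sigma(\pi_i)$ --- which is injective on the relevant Hom group precisely because of the negative-weight vanishing --- and the resulting identity reduces, via the naturality of $\eta$ and $\omega$ applied to the core morphism $u$ with $\Sigma^{i+1}(u)=\Sigma(\pi_i)\circ c_i$, to a computation entirely inside $\mathcal{C}$, where $F$ is controlled. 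Moreover, TR3 automatically propagates the compatibility to $\theta_{i+1}$, so the stronger statement is self-sustaining. In short: the missing idea is to carry the compatibility with the projections through the induction; a bare "there exists an isomorphism" hypothesis, as in your proposal, is too weak to close the loop.
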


\begin{proof}
Applying the suspension functor, it suffices prove the statement for $X\in \mathcal{U}_{[-n, 0]}$ with $n\geq 0$. By Corollary~\ref{cor:ast}, there exist a sequence of morphisms
$$X_0\stackrel{\phi_0}\longrightarrow X_1 \longrightarrow \cdots \longrightarrow  X_{n-1}\stackrel{\phi_{n-1}}\longrightarrow X_{n}=X$$
such that each $X_i\in \mathcal{U}_{[-i, 0]}$ and that the cone of $\phi_i$ lies in $\Sigma^{i+1}(\mathcal{C})$. Therefore, we can fix an exact triangle
$$X_i\stackrel{\phi_i}\longrightarrow X_{i+1}\stackrel{\pi_{i+1}} \longrightarrow \Sigma^{i+1}(A_{i})\stackrel{c_i}\longrightarrow \Sigma(X_i)$$
for each $0\leq i\leq n-1$, with each $A_{i}\in \mathcal{C}$. We set $A_{-1}=X_0$ and $\pi_{-1}={\rm Id}_{X_0}$.

We claim that for each $0\leq i\leq n$, there exists an isomorphism $\theta_i\colon F(X_i)\rightarrow X_i$ such that $\pi_i\circ \theta_i=\Sigma^i(\eta_{A_{i-1}})\circ \omega^i_{A_{i-1}}\circ F(\pi_i)$.

We prove the claim by induction. Set $\theta_0=\eta_{X_0}$. We assume that $\theta_{i}$ is already established. Consider the following diagram with rows being exact triangles.
\[
\xymatrix{
F(X_i)\ar[d]_-{\theta_i} \ar[r]^-{F(\phi_i)} & F(X_{i+1}) \ar[r]^-{F(\pi_{i+1})} & F\Sigma^{i+1}(A_{i}) \ar[d]_-{\Sigma^{i+1}(\eta_{A_i}) \circ \omega^{i+1}_{A_{i}}} \ar[rr]^-{\omega_{X_i}\circ F(c_i)} && \Sigma F(X_i) \ar[d]^-{\Sigma(\theta_i)}\\
X_i\ar[r]^-{\phi_i} & X_{i+1} \ar[r]^-{\pi_{i+1}} & \Sigma^{i+1}(A_{i}) \ar[rr]^-{c_i} && \Sigma(X_i)
}\]
It suffices to prove that the square on the right hand side commutes, and then by (TR3) we obtain the required isomorphism $\theta_{i+1}$.

For this end, we apply Corollary~\ref{cor:vanishing} to obtain the following identity.
$$0={\rm Hom}_\mathcal{T}(A_i, \Sigma^{-i-1}(X_i))\simeq {\rm Hom}_\mathcal{T}(F\Sigma^{i+1}(A_i), X_i)$$
Therefore, the following map induced by $\Sigma(\pi_i)\colon \Sigma(X_i)\rightarrow \Sigma^{i+1}(A_{i-1})$
$${\rm Hom}_\mathcal{T}(F\Sigma^{i+1}(A_i), \Sigma(X_i))\longrightarrow {\rm Hom}_\mathcal{T}(F\Sigma^{i+1}(A_i), \Sigma^{i+1}(A_{i-1}))$$
is injective. Consequently, the required commutativity above follows from the identity below.
\begin{align}\label{equ:comm-diag-1}
\Sigma(\pi_i)\circ \Sigma(\theta_i)\circ \omega_{X_i}\circ F(c_i)=\Sigma(\pi_i)\circ c_i\circ \Sigma^{i+1}(\eta_{A_i})\circ \omega^{i+1}_{A_i}.
\end{align}

To prove (\ref{equ:comm-diag-1}), we fix a unique morphism $u\colon A_i\rightarrow A_{i-1}$ in $\mathcal{C}$ satisfying
$$\Sigma^{i+1}(u)=\Sigma(\pi_i)\circ c_i.$$
 By the inductive hypothesis, we have the first equality of the following identity.
\begin{align*}
\Sigma(\pi_i)\circ \Sigma(\theta_i)\circ \omega_{X_i}\circ F(c_i) &= \Sigma^{i+1}(\eta_{A_{i-1}})\circ \Sigma(\omega^i_{A_{i-1}}) \circ \Sigma F(\pi_i)\circ \omega_{X_i}\circ F(c_i)\\
&=\Sigma^{i+1}(\eta_{A_{i-1}})\circ \Sigma(\omega^i_{A_{i-1}}) \circ \omega_{\Sigma^{i+1}(A_{i-1})}\circ F\Sigma(\pi_i)\circ F(c_i)\\
&=\Sigma^{i+1}(\eta_{A_{i-1}})\circ \omega^{i+1}_{A_{i-1}}\circ F\Sigma(\pi_i)\circ F(c_i)\\
&=\Sigma^{i+1}(\eta_{A_{i-1}})\circ \omega^{i+1}_{A_{i-1}}\circ F\Sigma^{i+1}(u)\\
&=\Sigma^{i+1}(\eta_{A_{i-1}})\circ \Sigma^{i+1}F(u)\circ \omega^{i+1}_{A_i}\\
&=\Sigma^{i+1}(u)\circ \Sigma^{i+1}(\eta_{A_i})\circ \omega^{i+1}_{A_i}\\
&= \Sigma(\pi_i)\circ c_i \circ \Sigma^{i+1}(\eta_{A_i})\circ \omega^{i+1}_{A_i}
\end{align*}
Here, the second (\emph{resp}. fifth, sixth) equality uses the naturalness of $\omega$ (\emph{resp}. $\omega^{i+1}$, $\eta$), the third equality uses the definition of $\omega^{i+1}$, the fourth and last equalities use the definition of $u$. This proves (\ref{equ:comm-diag-1}), which implies the required commutativity above.
\end{proof}

\begin{prop}
 Suppose that $\mathcal{T}$ is cocomplete and that $F$ preserves  arbitrary  coproducts.  Assume that the pre-weight structure $(\mathcal{U}_{\geq 0}, \mathcal{U}_{\leq 0})$ is left convergent and that ${\rm Hom}_\mathcal{T}(\mathcal{C}, \Sigma^{-n}(\mathcal{C}))=0$ for any $n\geq 1$. Then for each object $X\in \mathcal{U}_{-}$, $F(X)$ is isomorphic to $X$.
\end{prop}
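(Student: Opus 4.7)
The plan is to promote Proposition~\ref{prop:object-b} from $\mathcal{U}_b$ to $\mathcal{U}_{-}$ by realizing any $X\in \mathcal{U}_{-}$ as a homotopy colimit of objects in $\mathcal{U}_b$ and transporting the isomorphism through $F$.

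Since $\mathcal{U}_{-}\subseteq \mathcal{U}_{\leq 0}$, left convergence furnishes a convergent left tower $(\iota_n,\phi_n)_{n\geq 0}$ of $X$ with $X_n\in \mathcal{U}_{[-n,0]}\subseteq \mathcal{U}_b$ and $X_0\in \mathcal{C}$. Identifying $X$ with ${\rm hocolim}\;X_n$, we have the exact triangle (\ref{tri:hocolim}). For each $n\geq 0$ we fix an exact triangle
$$X_n \xrightarrow{\phi_n} X_{n+1}\xrightarrow{\pi_{n+1}}\Sigma^{n+1}(A_n)\xrightarrow{c_n}\Sigma(X_n)$$
with $A_n\in \mathcal{C}$, as in the tower construction.

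I would then construct isomorphisms $\theta_n\colon F(X_n)\to X_n$ along the tower by running the very same inductive step already appearing in the proof of Proposition~\ref{prop:object-b}: set $\theta_0=\eta_{X_0}$, and given $\theta_n$ satisfying the compatibility $\pi_n\circ \theta_n=\Sigma^n(\eta_{A_{n-1}})\circ \omega^n_{A_{n-1}}\circ F(\pi_n)$, use Corollary~\ref{cor:vanishing}, naturality of $\eta$ and $\omega^{n+1}$, and (TR3) to produce $\theta_{n+1}$ with the analogous compatibility; in particular, the square identity $\theta_{n+1}\circ F(\phi_n)=\phi_n\circ \theta_n$ holds for every $n$.

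Finally, since $F$ preserves arbitrary coproducts, the $\theta_n$'s assemble into an isomorphism $\Theta\colon F(\coprod_{n\geq 0}X_n)\to \coprod_{n\geq 0}X_n$; a direct check on each coproduct summand, using the square identity and the defining formula for $1-\phi$, yields $(1-\phi)\circ \Theta=\Theta\circ F(1-\phi)$. Applying the triangle functor $F$ to the triangle (\ref{tri:hocolim}) and comparing with (\ref{tri:hocolim}) itself, axiom (TR3) completes the commutative square into a morphism of exact triangles whose first two vertical arrows both equal $\Theta$; the triangulated five-lemma then produces an isomorphism $F(X)\to X$, as required. The main (and rather mild) obstacle is the compatibility of the $\theta_n$'s along the full infinite tower, which is a direct iteration of the argument already contained in the proof of Proposition~\ref{prop:object-b}.
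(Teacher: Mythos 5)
Your proposal is correct and follows essentially the same route as the paper: construct the compatible isomorphisms $\theta_n\colon F(X_n)\to X_n$ by the inductive step of Proposition~\ref{prop:object-b} (which indeed yields $\phi_n\circ\theta_n=\theta_{n+1}\circ F(\phi_n)$), use preservation of coproducts to get the commuting square with $1-\phi$, and conclude via (TR3) and the triangulated five lemma applied to the homotopy colimit triangle.
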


\begin{proof}
Applying the suspension functor, it suffices to prove the result for  $X\in \mathcal{U}_{\leq 0}$. Take a left tower $(\iota_n, \phi_n)_{n\geq 0}$ of $X$ which is convergent. In particular, we identify $X$ with ${\rm hocolim}\; X_n$. By the proof of Proposition~\ref{prop:object-b}, there are isomorphisms $\theta_n\colon F(X_n)\rightarrow X_n$. Moreover, by  the inductive construction in the proof, we observe that these isomorphisms do satisfy the following condition.
$$\phi_n\circ \theta_n=\theta_{n+1}\circ F(\phi_n)$$
Therefore, the leftmost square in the following diagram commutes.
\[\xymatrix{
F(\coprod_{n\geq 0} X_n) \ar[d]_-{\coprod_{n\geq 0}\theta_n} \ar[r]^-{F(1-\phi)} & F(\coprod_{n\geq 0} X_n) \ar[d]^-{\coprod_{n\geq 0}\theta_n}\ar[r]^-{F(\gamma)}  & F({\rm hocolim}\; X_n) \ar[r] & \Sigma F(\coprod_{n\geq 0} X_n) \ar[d]^-{\Sigma(\coprod_{n\geq 0}\theta_n)}\\
\coprod_{n\geq 0} X_n \ar[r]^{1-\phi} & \coprod_{n\geq 0} X_n \ar[r]^-\gamma & {\rm hocolim}\; X_n \ar[r] & \Sigma (\coprod_{n\geq 0} X_n)
}\]
Here, when using $\coprod_{n\geq 0}\theta_n$, we  implicitly identify $F(\coprod_{n\geq 0} X_n)$ with $\coprod_{n\geq 0} F(X_n)$.  By (TR3), we have an isomorphism $F({\rm hocolim}\; X_n)\rightarrow {\rm hocolim}\; X_n$ making the diagram commute. It follows that $F(X)\simeq X$.
\end{proof}

\begin{rem}\label{rem:X}
Assume that $X\in \mathcal{U}_{\leq 0}$. Recall from Definition~\ref{defn:left-conv} that the isomorphism $\kappa\colon {\rm hocolim}\; X_n\rightarrow X$ is required to satisfy  $\kappa\circ \gamma_n=\iota_n$ for each $n\geq 0$. It follows from $\kappa\circ \gamma_0=\iota_0$ and $\theta_0=\eta_{X_0}$ that the isomorphism $\theta_X\colon F(X)\rightarrow  X$ above satisfies
\begin{align}\label{equ:X}
\theta_X \circ F(\iota_0)=\iota_0\circ \eta_{X_0}.
\end{align}
Here, we mention that the object $X_0$ belongs to the core $\mathcal{C}$ and thus $\eta_{X_0}$ makes sense.
\end{rem}

By duality, we have the following result.

\begin{prop}
 Suppose that $\mathcal{T}$ is complete and that $F$ preserves  arbitrary  products.  Assume  that ${\rm Hom}_\mathcal{T}(\mathcal{C}, \Sigma^{-n}(\mathcal{C}))=0$ for any $n\geq 1$ and that the pre-weight structure $(\mathcal{U}_{\geq 0}, \mathcal{U}_{\leq 0})$ is right convergent. Then for each object $Y\in \mathcal{U}_{+}$, $F(Y)$ is isomorphic to $Y$. \hfill $\square$
\end{prop}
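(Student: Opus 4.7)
The plan is to dualize the proof of the previous proposition, which handled $\mathcal{U}_-$ via left convergence, by replacing homotopy colimits with homotopy limits and coproducts with products throughout. As a first reduction, since the suspension functor preserves $\mathcal{U}_+$ and commutes with $F$ up to the connecting isomorphism $\omega$, it suffices to prove the statement for $Y\in \mathcal{U}_{\geq 0}$. Using right convergence, I fix a convergent right tower $(\pi_n,\psi_n)_{n\geq 0}$ of $Y$, so that $Y$ is identified with ${\rm holim}\;Y_n$ via the morphism $\lambda$ of Definition~\ref{defn:right-conv}, and each $Y_n$ belongs to $\mathcal{U}_{[0,n]}\subseteq \mathcal{U}_b$.

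Next, Proposition~\ref{prop:object-b} applies to each $Y_n$ and supplies an isomorphism $\theta_n\colon F(Y_n)\rightarrow Y_n$. The key technical input is the dual of the compatibility property observed in the previous proof: by inspecting the inductive construction (now performed via the dual tower built from Lemma~\ref{lem:pws}(4), so that successive cones lie in $\Sigma^{-n}(\mathcal{C})$), one verifies
\[
\psi_n\circ \theta_{n+1}=\theta_n\circ F(\psi_n)
\]
for every $n\geq 0$. This is the dual of the relation $\phi_n\circ \theta_n=\theta_{n+1}\circ F(\phi_n)$ used in the proof just above, and it follows by the same inductive argument, using Corollary~\ref{cor:vanishing} together with the assumption ${\rm Hom}_\mathcal{T}(\mathcal{C},\Sigma^{-n}(\mathcal{C}))=0$ to force the relevant Hom-group to vanish and hence make the required square commute.

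With this in hand, I apply $F$ to the defining exact triangle~(\ref{tri:right-tower}) of ${\rm holim}\;Y_n$. Since $F$ preserves arbitrary products, $F(\prod_{n\geq 0}Y_n)$ is identified with $\prod_{n\geq 0}F(Y_n)$, and the family $(\theta_n)_{n\geq 0}$ assembles into an isomorphism $\prod_{n\geq 0}\theta_n\colon F(\prod_{n\geq 0}Y_n)\rightarrow \prod_{n\geq 0}Y_n$. The compatibility relation above shows that this isomorphism intertwines $F(1-\psi)$ with $1-\psi$, so the diagram
\[
\xymatrix{
F({\rm holim}\;Y_n) \ar[r]^-{F(\sigma)} & F(\prod_{n\geq 0}Y_n) \ar[d]_-{\prod\theta_n} \ar[r]^-{F(1-\psi)} & F(\prod_{n\geq 0}Y_n) \ar[d]^-{\prod\theta_n}\\
{\rm holim}\;Y_n \ar[r]^-{\sigma} & \prod_{n\geq 0}Y_n \ar[r]^-{1-\psi} & \prod_{n\geq 0}Y_n
}
\]
commutes on the square shown, and its top row (suitably rotated with the connecting isomorphism $\omega$) is an exact triangle. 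Axiom (TR3) therefore yields an isomorphism $F({\rm holim}\;Y_n)\rightarrow {\rm holim}\;Y_n$, which under the identification $Y\simeq {\rm holim}\;Y_n$ gives the desired isomorphism $F(Y)\simeq Y$.

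The main obstacle is verifying the compatibility identity $\psi_n\circ \theta_{n+1}=\theta_n\circ F(\psi_n)$; this requires redoing the inductive construction of $\theta_n$ in the dual direction and carefully tracking the analogue of the commutativity computation~(\ref{equ:comm-diag-1}) with the connecting isomorphisms $\omega^n$ and their inverses. Everything else is a formal dualization.
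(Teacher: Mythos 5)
Your proposal is correct and coincides with the paper's intended argument: the paper dispenses with a written proof by invoking duality with the preceding proposition, and what you have done is carry out that dualization explicitly (right tower in place of left tower, isomorphisms $\theta_n$ compatible with the $\psi_n$ via the dual inductive construction, product preservation to identify $F(\prod_{n\geq 0}Y_n)$ with $\prod_{n\geq 0}F(Y_n)$, and (TR3) applied to the homotopy limit triangle). The compatibility $\psi_n\circ\theta_{n+1}=\theta_n\circ F(\psi_n)$ you flag as the main obstacle is indeed exactly the dual of the relation used in the left-convergent case and follows by the same computation with $\omega^{n}$, so there is no gap.
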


\begin{rem}
Assume that $Y\in\mathcal{U}_{\geq 0}$  has  a convergent right tower $(\pi_n, \psi_n)_{n\geq 0}$. In particular, we have $\pi_0\colon Y\rightarrow Y_0$ with $Y_0\in \mathcal{C}$.  Recall from  Definition~\ref{defn:right-conv} that the isomorphism $\lambda\colon Y\rightarrow {\rm holim} Y_n$ is required to satisfy $\sigma_0\circ \lambda=\pi_0$.  Then we infer that  the isomorphism  $\tau_Y\colon F(Y)\rightarrow Y$ above necessarily satisfies
\begin{align}\label{equ:Y}
\pi_0\circ \tau_Y=\eta_{Y_0}\circ F(\pi_0).
\end{align}
\end{rem}

The following result might be viewed as  an infinite version of Proposition~\ref{prop:object-b}. A triangle functor is said to be \emph{bi-continuous} if it preserves  arbitrary  coproducts and products.

\begin{thm}\label{thm:trivial-action}
 Suppose that $\mathcal{T}$ is bi-complete and that $F$ is bi-continuous.  Assume that the pre-weight structure $(\mathcal{U}_{\geq 0}, \mathcal{U}_{\leq 0})$ is  two-sided convergent and that ${\rm Hom}_\mathcal{T}(\mathcal{C}, \Sigma^{-n}(\mathcal{C}))=0$ for any $n\geq 1$. Then for each object $A\in \mathcal{T}$, $F(A)$ is isomorphic to $A$.
\end{thm}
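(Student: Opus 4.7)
The plan is to write $A$ as an extension of an object in $\mathcal{U}_-$ by an object in $\mathcal{U}_+$, apply the two preceding propositions to each piece, and then invoke Proposition~\ref{prop:vanishing} to see that the only required compatibility square is forced to commute. Concretely, given $A\in\mathcal{T}$, I invoke the decomposition $\mathcal{U}_+\ast\mathcal{U}_-=\mathcal{T}$ asserted in the proof of Lemma~\ref{lem:factor} to obtain an exact triangle $Y\to A\to X\xrightarrow{c}\Sigma(Y)$ with $Y\in\mathcal{U}_+$ and $X\in\mathcal{U}_-$. Applying the bi-continuous triangle functor $F$ gives the exact triangle $F(Y)\to F(A)\to F(X)\xrightarrow{\omega_Y\circ F(c)}\Sigma F(Y)$, and the two preceding propositions supply isomorphisms $\tau_Y\colon F(Y)\to Y$ (from right convergence, using $Y\in\mathcal{U}_+$) and $\theta_X\colon F(X)\to X$ (from left convergence, using $X\in\mathcal{U}_-$).

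To promote $(\tau_Y,\theta_X)$ to a morphism of the two triangles via (TR3), and hence to an isomorphism $F(A)\simeq A$ by the triangulated five-lemma, it suffices to check the equality $c\circ\theta_X=\Sigma(\tau_Y)\circ\omega_Y\circ F(c)$ in $\mathrm{Hom}_\mathcal{T}(F(X),\Sigma(Y))$. The key point is that this Hom group is automatically zero: since $\mathcal{U}_+$ is a triangulated subcategory of $\mathcal{T}$ and is contained in every $\mathcal{U}_{\geq m}$, we have $\Sigma(Y)\in\mathcal{U}_+\subseteq\mathcal{U}_{\geq 1}$; and since $F(X)\simeq X\in\mathcal{U}_-\subseteq\mathcal{U}_{\leq 0}$, Proposition~\ref{prop:vanishing} forces $\mathrm{Hom}_\mathcal{T}(F(X),\Sigma(Y))=0$. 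Both sides of the compatibility equation therefore vanish, the square commutes trivially, and the argument closes.

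The main obstacle is the opening step, namely securing the decomposition $\mathcal{U}_+\ast\mathcal{U}_-=\mathcal{T}$. This is invoked in the proof of Lemma~\ref{lem:factor} but does not obviously follow from Definition~\ref{defn:pws}(1) alone, since iterating that definition only yields the weaker $\mathcal{T}=\mathcal{U}_{\geq n}\ast\mathcal{U}_{\leq n-1}$ for each fixed $n$. I expect the two-sided convergence hypothesis to enter precisely here, letting one refine a Definition~\ref{defn:pws}(1) decomposition $A\in\mathcal{U}_{\geq 0}\ast\mathcal{U}_{\leq -1}$ to one with components in $\mathcal{U}_+$ and $\mathcal{U}_-$ by taking homotopy limits (respectively, homotopy colimits) of iterated Lemma~\ref{lem:pws}(1) decompositions of the two summands. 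Once that decomposition is in hand, the remainder of the argument is the routine assembly described above, with Proposition~\ref{prop:vanishing} doing all of the real work.
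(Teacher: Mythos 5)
Your proposal breaks at its central step, and the break is tied to how you read $\mathcal{U}_{+}$ and $\mathcal{U}_{-}$. Despite the literal ``$\bigcap$'' in Section~2, these are clearly intended to be $\mathcal{U}_{+}=\bigcup_{m}\mathcal{U}_{\geq m}$ and $\mathcal{U}_{-}=\bigcup_{m}\mathcal{U}_{\leq m}$: otherwise $\mathcal{U}_b=\mathcal{U}_+\cap\mathcal{U}_-$ and the identifications $\mathcal{K}_{\pm}\simeq \mathbf{K}^{\pm}(\mathcal{A})$, $\mathcal{D}_{\pm}\simeq\mathbf{D}^{\pm}(\mathcal{A})$ fail (for instance $\bigcap_m\mathcal{D}_{\geq m}=0$ in $\mathbf{D}(\mathcal{A})$). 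Under the union reading, your opening decomposition $\mathcal{U}_+\ast\mathcal{U}_-=\mathcal{T}$ is immediate from Definition~\ref{defn:pws}(1) — no convergence argument is needed there, so your speculation about that step is misplaced — but your key vanishing claim collapses, because $\mathcal{U}_+\not\subseteq\mathcal{U}_{\geq 1}$ and $\mathcal{U}_-\not\subseteq\mathcal{U}_{\leq 0}$. Under the intersection reading your inclusions hold but the decomposition is false. Either way, the two ingredients your argument needs simultaneously cannot both be arranged.

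Concretely, the best the axioms provide is a triangle $Y\rightarrow A\rightarrow X\stackrel{c}\rightarrow \Sigma(Y)$ with $Y\in\mathcal{U}_{\geq 1}$ and $X\in\mathcal{U}_{\leq 0}$ (this is the paper's triangle, rotated); iterating Definition~\ref{defn:pws}(1) always leaves the two windows adjacent. Then $\Sigma(Y)\in\mathcal{U}_{\geq 0}$ only, so ${\rm Hom}_\mathcal{T}(F(X),\Sigma(Y))\simeq {\rm Hom}_\mathcal{T}(X,\Sigma(Y))$ is not covered by Proposition~\ref{prop:vanishing}, and it is genuinely nonzero in general: it contains $c\circ\theta_X$, which is nonzero whenever the triangle is non-split, so the group cannot vanish except in degenerate cases. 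Hence the compatibility square is not free, and establishing it is exactly the substance of the paper's proof: one must choose $\theta_X$ and $\tau_Y$ with the normalizations (\ref{equ:X}) and (\ref{equ:Y}) coming from convergent left and right towers, use Lemma~\ref{lem:vanishing} (applied to $X'\in\mathcal{U}_{\leq -1}$) and Proposition~\ref{prop:vanishing} (applied to $Y'\in\mathcal{U}_{\geq 1}$) to get two injectivity statements reducing the identity to one between maps $F(X_0)\rightarrow Y_0$, and then conclude by the naturality of $\eta$ on the core morphism $\pi_0\circ f\circ\iota_0\colon X_0\rightarrow Y_0$. Your proposal replaces all of this by a vanishing statement that is false, so it has a genuine gap; the overall strategy (decompose $A$, use the two propositions on the pieces, complete by (TR3)) does match the paper, but the hard part is precisely the step you dismissed as automatic.
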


\begin{proof}
Applying $\Sigma^{-1}$ to Definition~\ref{defn:pws}(1), we have $\mathcal{U}_{\geq 1}\ast \mathcal{U}_{\leq 0}=\mathcal{T}$.  Therefore, we can fix an exact triangle
$$A\stackrel{a}\longrightarrow X\stackrel{f}\longrightarrow Y\stackrel{b}\longrightarrow \Sigma(A)$$
with $X\in \mathcal{U}_{\leq 0}$ and $Y\in \mathcal{U}_{\geq 0}$. Fix a left tower $(\iota_n, \phi_n)_{n\geq 0}$ for $X$ and a right tower $(\pi_n, \psi_n)_{n\geq 0}$ for $Y$, both of which are assumed to be convergent. By Remark~\ref{rem:X}, we have an isomorphism $\theta_X\colon F(X)\rightarrow X$ satisfying (\ref{equ:X}). Similarly, we have an isomorphism $\tau_Y\colon F(Y)\rightarrow Y$ satisfying (\ref{equ:Y}).

By (TR3), it suffices to prove the commutativity of the middle square below.
\[\xymatrix{
F(A) \ar[r]^-{F(a)} & F(X)\ar[d]_-{\theta_X} \ar[r]^-{F(f)} & F(Y)\ar[d]^-{\tau_Y} \ar[r]^-{\omega_A\circ F(b)} & \Sigma F(A)\\
A \ar[r]^-{a} & X \ar[r]^-{f} & Y \ar[r]^-{b} & \Sigma (A)
}\]
For this commutativity, we consider the following two exact triangles appearing in the two towers:
\begin{align}\label{tri:triv1}
X_0\stackrel{\iota_0} \longrightarrow X\longrightarrow X'\longrightarrow \Sigma(X_0)
\end{align}
and
\begin{align}\label{tri:triv2}
Y'\longrightarrow Y \stackrel{\pi_0}\longrightarrow Y_0\longrightarrow \Sigma(Y').
\end{align}
We observe that $X'\in \mathcal{U}_{\leq -1}$ and $Y'\in \mathcal{U}_{\geq 1}$; moreover, we have isomorphisms $F(X')\simeq X'$ and $F(X)\simeq X$ by Proposition~\ref{prop:object-b}. By Lemma~\ref{lem:vanishing}, we have
$${\rm Hom}_{\mathcal{T}}(F(X'), Y_0)\simeq {\rm Hom}_{\mathcal{T}}(X', Y_0)=0.$$
Applying ${\rm Hom}_\mathcal{T}(F(-), Y_0)$ to (\ref{tri:triv1}), we infer that  the following map induced by $F(\iota_0)$
$${\rm Hom}_{\mathcal{T}}(F(X), Y_0)\longrightarrow {\rm Hom}_{\mathcal{T}}(F(X_0), Y_0)$$
is injective. Similarly, by Proposition~\ref{prop:vanishing}, we have
$${\rm Hom}_{\mathcal{T}}(F(X), Y')\simeq {\rm Hom}_{\mathcal{T}}(X, Y')=0.$$
Applying ${\rm Hom}_\mathcal{T}(F(X), -)$ to (\ref{tri:triv2}), we infer that  the following map induced by $\pi_0$
$${\rm Hom}_{\mathcal{T}}(F(X), Y)\longrightarrow {\rm Hom}_{\mathcal{T}}(F(X), Y_0)$$
is also injective. By the two injective maps above, the required commutativity is equivalent to the following identity.
$$\pi_0\circ (f\circ \theta_X)\circ F(\iota_0)=\pi_0\circ (\tau_Y\circ F(f))\circ F(\iota_0)$$

Finally, we prove the identity above as follows:
\begin{align*}
\pi_0\circ f\circ \theta_X\circ F(\iota_0) & =(\pi_0\circ f \circ \iota_0)\circ \eta_{X_0}\\
                                            &= \eta_{Y_0}\circ F(\pi_0)\circ F(f)\circ F(\iota_0)\\
                                            &=\pi_0\circ \tau_Y \circ F(f)\circ F(\iota_0).
\end{align*}
Here, the first equality uses (\ref{equ:X}) and the third one uses (\ref{equ:Y}). For the second equality, we observe that $\pi_0\circ f \circ \iota_0\colon X_0\rightarrow Y_0$ is a morphism in $\mathcal{C}$ and then we apply the naturalness of $\eta$.
\end{proof}

\subsection{Extending the fully-faithfulness}

 Let $\mathcal{T}'$ be another triangulated category. A triangle functor $F\colon \mathcal{T}\rightarrow \mathcal{T}'$ is called \emph{bounded} with respect to the pre-weight structure $(\mathcal{U}_{\geq 0}, \mathcal{U}_{\leq 0})$ on $\mathcal{T}$, if there exists a natural number $n_0$ such that  ${\rm Hom}_{\mathcal{T}'}(F(\mathcal{U}_{\geq 0}), F(\mathcal{U}_{\leq -n_0}))=0$, or equivalently, ${\rm Hom}_{\mathcal{T}'}(F(X), \Sigma^{n_0}F(Y))=0$ for any $X\in \mathcal{U}_{\geq 0}$ and $Y\in \mathcal{U}_{\leq 0}$.

In this subsection, we assume that both $\mathcal{T}$ and $\mathcal{T}'$ are bi-complete.

\begin{thm}\label{thm:extend-ff}
Let $F\colon \mathcal{T}\rightarrow \mathcal{T}'$ be a bi-continuous triangle functor. Assume that the following conditions are fulfilled.
\begin{enumerate}
\item There is a pre-weight structure $(\mathcal{U}_{\geq 0}, \mathcal{U}_{\leq 0})$ on $\mathcal{T}$ with finite dimension.
\item We have $\mathcal{U}_{\leq 0}\subseteq {\rm Loc}\langle \mathcal{U}_b\rangle$ and $\mathcal{U}_{\geq 0}\subseteq {\rm coLoc}\langle \mathcal{U}_b\rangle$.
\item The functor $F$ is bounded with respect to  $(\mathcal{U}_{\geq 0}, \mathcal{U}_{\leq 0})$.
\item The restriction $F|_{\mathcal{U}_b}\colon \mathcal{U}_b\rightarrow \mathcal{T}'$ is fully faithful.
\end{enumerate}
Then the given functor $F$ is fully faithful.
\end{thm}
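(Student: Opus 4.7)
The plan is to bootstrap the fully-faithfulness of $F$ from $\mathcal{U}_b$ to all of $\mathcal{T}$ in three layers, alternating between algebraic closure arguments (exploiting that $F$ preserves coproducts and products) and vanishing arguments (exploiting the finite dimension of the pre-weight structure together with the boundedness of $F$). Write $\mathfrak{L}=\mathrm{Loc}\langle\mathcal{U}_b\rangle$ and $\mathfrak{C}=\mathrm{coLoc}\langle\mathcal{U}_b\rangle$; since both are triangulated, condition~(2) combined with $\Sigma$-stability gives $\mathcal{U}_{\leq m}\subseteq\mathfrak{L}$ and $\mathcal{U}_{\geq m}\subseteq\mathfrak{C}$ for all $m\in\mathbb{Z}$. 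Let $d$ denote the dimension of the pre-weight structure and $n_0$ the bound from~(3), so that $\mathrm{Hom}_{\mathcal{T}}(\mathcal{U}_{\geq m},\mathcal{U}_{\leq m-d-1})=0$ and $\mathrm{Hom}_{\mathcal{T}'}(F(\mathcal{U}_{\geq m}),F(\mathcal{U}_{\leq m-n_0}))=0$ for every $m$. For a $\Sigma$-closed full subcategory $\mathcal{B}\subseteq\mathcal{T}$, let $\mathcal{F}^{\mathcal{B}}\subseteq\mathcal{T}$ consist of those $A$ for which $F_{A,B}\colon\mathrm{Hom}_{\mathcal{T}}(A,B)\to\mathrm{Hom}_{\mathcal{T}'}(F(A),F(B))$ is an isomorphism for every $B\in\mathcal{B}$; a standard 5-lemma argument along exact triangles shows $\mathcal{F}^{\mathcal{B}}$ is a localizing subcategory, and the dual construction $\mathcal{F}_{\mathcal{A}}$ (for $\Sigma$-closed $\mathcal{A}$) yields a colocalizing subcategory. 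The first layer then follows immediately from condition~(4): $\mathcal{F}^{\mathcal{U}_b}\supseteq\mathcal{U}_b$, hence $\mathcal{F}^{\mathcal{U}_b}\supseteq\mathfrak{L}$, and dually $\mathcal{F}_{\mathcal{U}_b}\supseteq\mathfrak{C}$.

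The second layer extends fully-faithfulness to $\mathcal{T}\times\mathcal{U}_b$ (and symmetrically to $\mathcal{U}_b\times\mathcal{T}$), and is where the quantitative hypotheses enter. Given $A\in\mathcal{U}_{\geq 0}$ and any $B\in\mathcal{U}_b\subseteq\mathcal{U}_{\leq b}$, use Lemma~\ref{lem:pws}(1) to decompose $A$ as $W\to A\to Z\to\Sigma W$ with $W\in\mathcal{U}_{\geq N}$ and $Z\in\mathcal{U}_{[0,N-1]}\subseteq\mathcal{U}_b$, where $N\geq b+\max(d+2,n_0+1)$. Then $F_{Z,B}$ and all of its relevant $\Sigma^{\pm 1}$-shifts are isomorphisms by condition~(4), while $F_{W,B}$ and its shifts are trivially isomorphisms since both the $\mathrm{Hom}_{\mathcal{T}}$ and the $\mathrm{Hom}_{\mathcal{T}'}$ groups vanish by the two displayed orthogonalities. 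The 5-lemma forces $F_{A,B}$ iso, so $\mathcal{F}^{\mathcal{U}_b}\supseteq\mathcal{U}_{\geq 0}$; combining with $\mathcal{F}^{\mathcal{U}_b}\supseteq\mathfrak{L}$ and the decomposition $\mathcal{T}=\mathcal{U}_{\geq 0}\ast\mathcal{U}_{\leq -1}\subseteq\mathcal{U}_{\geq 0}\ast\mathfrak{L}$ from condition~(1) yields $\mathcal{F}^{\mathcal{U}_b}=\mathcal{T}$. The symmetric argument gives $\mathcal{F}_{\mathcal{U}_b}=\mathcal{T}$, and a further closure round produces $\mathcal{F}_{\mathcal{T}}\supseteq\mathfrak{C}$ and $\mathcal{F}^{\mathcal{T}}\supseteq\mathfrak{L}$; in summary, $F$ is fully faithful on both $\mathcal{T}\times\mathfrak{C}$ and $\mathfrak{L}\times\mathcal{T}$.

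For the final layer, given arbitrary $A,B\in\mathcal{T}$, pick condition~(1) decompositions $U_A\to A\to V_A\to\Sigma U_A$ and $U_B\to B\to V_B\to\Sigma U_B$ with $U_A,U_B\in\mathcal{U}_{\geq 0}\subseteq\mathfrak{C}$ and $V_A,V_B\in\mathcal{U}_{\leq -1}\subseteq\mathfrak{L}$. Two applications of the 5-lemma reduce $F_{A,B}$ being iso to the three pairs $(V_A,B)$ (lying in $\mathfrak{L}\times\mathcal{T}$, done), $(U_A,U_B)$ (lying in $\mathcal{T}\times\mathfrak{C}$, done), and the residual cross pair $(U_A,V_B)$ in $\mathfrak{C}\times\mathfrak{L}$. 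This cross pair is the main obstacle, since the pure closure arguments only ever fill the diagonal $\mathfrak{L}\times\mathfrak{C}$; it is resolved by invoking the quantitative inputs a second time. Decompose $U_A$ once more as $W'\to U_A\to Z'\to\Sigma W'$ with $W'\in\mathcal{U}_{\geq N}$, $Z'\in\mathcal{U}_b$ for $N$ sufficiently large; then $F_{Z',V_B}$ is an isomorphism by $F$ fully faithful on $\mathcal{U}_b\times\mathcal{T}$, while both $\mathrm{Hom}_{\mathcal{T}}(W',V_B)$ and $\mathrm{Hom}_{\mathcal{T}'}(F(W'),F(V_B))$, together with the $\Sigma^{\pm 1}$-shifts needed by the 5-lemma, vanish by the two displayed orthogonalities. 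The 5-lemma then completes the proof.
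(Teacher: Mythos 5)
Your proposal is correct, and it draws on the same toolkit as the paper's proof --- the truncation triangles of Lemma~\ref{lem:pws}(1)(3), the vanishing coming from the finite dimension (in $\mathcal{T}$) and from the boundedness of $F$ (in $\mathcal{T}'$), hypothesis (4) on $\mathcal{U}_b$, and subcategories of the form ``where $F_{-,-}$ is invertible'' which are (co)localizing because $F$ is bi-continuous --- but you route through these ingredients differently. The paper settles all pairs in $\mathcal{U}_+\times\mathcal{U}_-$ in one stroke (its Step 1) by truncating \emph{both} variables simultaneously, quoting Proposition~\ref{prop:pws-fd} for the untransformed row of the comparison square and the two vanishing hypotheses for the transformed row, and then finishes with a single two-stage sweep based on ${\rm Loc}\langle \mathcal{U}_+\rangle=\mathcal{T}={\rm coLoc}\langle\mathcal{U}_-\rangle$ (its Step 2). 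You instead bootstrap in layers: first $\mathcal{T}\times\mathcal{U}_b$ and $\mathcal{U}_b\times\mathcal{T}$ via one-sided truncations plus extension-closure along $\mathcal{T}=\mathcal{U}_{\geq 0}\ast\mathcal{U}_{\leq -1}$, then a (co)localizing closure to $\mathcal{T}\times{\rm coLoc}\langle\mathcal{U}_b\rangle$ and ${\rm Loc}\langle\mathcal{U}_b\rangle\times\mathcal{T}$, and only at the end the cross pairs in $\mathcal{U}_+\times\mathcal{U}_-$, which by then need a single truncation because $\mathcal{U}_b\times\mathcal{T}$ is already available; in particular you bypass Proposition~\ref{prop:pws-fd} entirely, the five lemma with vanishing on both rows playing its role. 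The price is extra bookkeeping: in the final layer the two five-lemma reductions actually require the cross-pair argument for all the shifted pairs $(\Sigma^j U_A,\Sigma^i V_B)$, not only $(U_A,V_B)$; since your argument is uniform over $\mathcal{U}_+\times\mathcal{U}_-$ with $N$ chosen pair by pair, this is harmless, but it deserves an explicit sentence. Overall the paper's two-step version is more economical, while yours isolates exactly where each hypothesis is consumed and avoids the auxiliary proposition.
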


\begin{proof}
We will show that for any object $X, Y$ in $\mathcal{T}$, the map
$$F_{X, Y}\colon {\rm Hom}_\mathcal{T}(X, Y)\longrightarrow {\rm Hom}_{\mathcal{T}'}(F(X), F(Y)), \; f\mapsto F(f),$$
is an isomorphism.

\emph{Step 1.}\; Fix a natural number  $n_0$ such that ${\rm Hom}_{\mathcal{T}'}(F(\mathcal{U}_{\geq 0}), F(\mathcal{U}_{\leq -n_0}))=0$. Denote by $d$ the dimension of the pre-weight structure. Assume that $X\in \mathcal{U}_{\geq n}$ and $Y\in \mathcal{U}_{\leq m}$. Set
$$N={\rm max}\{n+d+2, m+d+2, n+n_0+2, m+n_0+2\}.$$
The following argument is similar to the one in the proof of Proposition~\ref{prop:pws-fd}.

By Lemma~\ref{lem:pws}(1), we have two exact triangles:
$$X'\longrightarrow X\stackrel{a}\longrightarrow X''\longrightarrow \Sigma(X') \mbox{ and } Y'\stackrel{b}\longrightarrow Y\longrightarrow Y''\longrightarrow \Sigma(Y'),$$
with $X'\in \mathcal{U}_{\geq N}$, $X''\in \mathcal{U}_{[n, N-1]}$, $Y'\in \mathcal{U}_{[-N+1, m]}$ and $Y''\in \mathcal{U}_{\leq -N}$. By the choice of $N$, we observe that
$${\rm Hom}_{\mathcal{T}'}(F(X'), F(Y))=0={\rm  Hom}_{\mathcal{T}'}(\Sigma F(X'), F(Y)).$$
 Applying ${\rm Hom}_{\mathcal{T}'}(-, F(Y))$ to the following exact triangle
$$F(X')\longrightarrow F(X)\stackrel{F(a)}\longrightarrow F(X'')\longrightarrow \Sigma F(X'),$$
we infer that $F(a)$ induces an isomorphism
$${\rm Hom}_{\mathcal{T}'}(F(X''), F(Y))\longrightarrow {\rm Hom}_{\mathcal{T}'}(F(X), F(Y)).$$
Similar, the morphism $F(b)$ induces the following isomorphism
$${\rm Hom}_{\mathcal{T}'}(F(X''), F(Y'))\longrightarrow {\rm Hom}_{\mathcal{T}'}(F(X''), F(Y)).$$
Combining the two isomorphisms, we infer that the lower row in the following commutative square is an isomorphism.
\[
\xymatrix{ {\rm Hom}_\mathcal{T}(X'', Y') \ar[d]_-{F_{X'', Y'}}\ar[rr]^{b\circ -\circ a} && {\rm Hom}_\mathcal{T}(X, Y) \ar[d]^-{F_{X, Y}}\\
{\rm Hom}_{\mathcal{T}'}(F(X''), F(Y'))\ar[rr]^{F(b)\circ -\circ F(a)} && {\rm Hom}_{\mathcal{T}'}(F(X), F(Y))
}\]
Since both $X''$ and $Y'$ belong to $\mathcal{U}_b$, the map $F_{X'', Y'}$ is an isomorphism by (4). The upper row is also an isomorphism by Proposition~\ref{prop:pws-fd}. It follows that so is the right vertical arrow. We conclude that $F_{X, Y}$ is an isomorphism for any $X\in \mathcal{U}_+$ and $Y\in \mathcal{U}_{-}$.

\emph{Step 2.}\; We claim that ${\rm Loc}\langle \mathcal{U}_{+}\rangle=\mathcal{T}={\rm coLoc}\langle \mathcal{U}_{-}\rangle$. By (2), we have $\mathcal{U}_{\leq 0}\subseteq {\rm Loc}\langle \mathcal{U}_{b}\rangle \subseteq {\rm Loc}\langle \mathcal{U}_{+}\rangle$. Therefore, we have $\mathcal{T}=\mathcal{U}_{\geq 1}\ast \mathcal{U}_{\leq 0}\subseteq {\rm Loc}\langle \mathcal{U}_{+}\rangle \ast {\rm Loc}\langle \mathcal{U}_{+}\rangle ={\rm Loc}\langle \mathcal{U}_{+}\rangle$. By duality, we have the remaining equality.

For a fixed object $Y\in \mathcal{U}_{-}$, we consider the following full subcategory of $\mathcal{T}$:
$$\mathcal{T}_Y=\{X\in \mathcal{T}\; |\; F_{X, \Sigma^n(Y)} \mbox{ is an isomorphism, for any } n\in \mathbb{Z}\}.$$
Since $F$ preserves coproducts, it is a localizing subcategory. By Step 1, $\mathcal{U}_+\subseteq \mathcal{T}_Y$. The claim above implies that $\mathcal{T}=\mathcal{T}_Y$. In other words, $F_{X, Y}$ is an isomorphism for any $X\in \mathcal{T}$ and $Y\in \mathcal{U}_{-}$.

Dually, we fix an arbitrary object $X\in \mathcal{T}$ and consider the following full subcategory  of $\mathcal{T}$:
$$_X\mathcal{T}=\{Y\in \mathcal{T}\; |\; F_{X, \Sigma^n(Y)} \mbox{ is an isomorphism, for any } n\in \mathbb{Z}\}.$$
It is a colocalizing subcategory, which contains $\mathcal{U}_{-}$ by the preceding paragraph. The claim above implies that $\mathcal{T}={_X\mathcal{T}}$. This completes the whole proof.
\end{proof}

\section{The pseudo-identities}\label{sec:ps}

In this section, we introduce the notion of  pseudo-identity on a triangulated category with respect to a pre-weight structure. We apply results in Section~\ref{sec:triv-act} to obtain sufficient conditions on when a triangle endofunctor is isomorphic to a pseudo-identity; see Theorem~\ref{thm:pseudo}. In Proposition~\ref{prop:hereditary+ps}, we prove that any pseudo-identity on the derived category of a hereditary abelian category is isomorphic to the genuine identity functor.

Let $\mathcal{T}$ be a triangulated category with a pre-weight structure $(\mathcal{U}_{\geq 0}, \mathcal{U}_{\leq 0})$ whose core is $\mathcal{C}$. The following notion is inspired by \cite[Section~3]{CY}.

\begin{defn}\label{defn:pseudo}
A triangle autoequivalence $F\colon \mathcal{T}\rightarrow \mathcal{T}$ is called a \emph{pseudo-identity} on $\mathcal{T}$ with respect to $(\mathcal{U}_{\geq 0}, \mathcal{U}_{\leq 0})$, if $F(X)=X$ for each object $X$ and the restriction $F|_\mathcal{C}\colon \mathcal{C}\rightarrow \mathcal{C}$ equals ${\rm Id}_\mathcal{C}$ as a functor.
 \end{defn}

 In view of Section~\ref{sec:exm}, the general notion above unifies pseudo-identities both on the homotopy category and derived category in \cite{CY}.  We observe that a pseudo-identity on $\mathcal{T}$ is a triangle automorphism of $\mathcal{T}$, whose inverse is also a pseudo-identity.

The following observation is standard; compare \cite[Corollary~3.4]{CY}.

\begin{lem}\label{lem:pseudo}
Let $F\colon \mathcal{T}\rightarrow \mathcal{T}$ be a triangle endofunctor. Then $F$ is isomorphic to a pseudo-identity if and only if $F$ is a triangle autoequivalence such that $F(X)\simeq X$ for each object $X$ and that the restriction $F|_\mathcal{C}\colon \mathcal{C}\rightarrow \mathcal{C}$ is isomorphic to ${\rm Id}_\mathcal{C}$.
\end{lem}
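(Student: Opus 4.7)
The ``only if'' direction will be immediate: if $F$ is isomorphic to a pseudo-identity $G$, then $F$ inherits from $G$ the property of being a triangle autoequivalence, satisfies $F(X)\simeq G(X)=X$ on each object, and the given isomorphism restricts on $\mathcal{C}$ to an isomorphism $F|_\mathcal{C}\simeq G|_\mathcal{C}={\rm Id}_\mathcal{C}$.

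For the substantive ``if'' direction, my plan is to construct an explicit pseudo-identity $G$ together with an isomorphism $\theta\colon F\xrightarrow{\sim} G$ of triangle functors, by transporting $F$ along suitably chosen componentwise isomorphisms. First, I will fix a natural isomorphism $\eta\colon F|_\mathcal{C}\xrightarrow{\sim} {\rm Id}_\mathcal{C}$ supplied by the hypothesis. Then, for each object $X\in\mathcal{T}$, I will choose an isomorphism $\theta_X\colon F(X)\to X$, subject to the single constraint that $\theta_A:=\eta_A$ whenever $A\in\mathcal{C}$. Define $G$ by $G(X):=X$ on objects and $G(f):=\theta_Y\circ F(f)\circ\theta_X^{-1}$ on morphisms $f\colon X\to Y$; routine checks will show that $G$ is an additive endofunctor and that $\theta\colon F\Rightarrow G$ is a natural isomorphism of additive functors.

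To upgrade $G$ to a triangle endofunctor I will equip it with the connecting isomorphism $\omega^G_X:=\Sigma(\theta_X)\circ\omega_X\circ\theta_{\Sigma X}^{-1}$, where $\omega$ is the connecting isomorphism of $F$. By design, $\theta$ then promotes to a natural isomorphism of triangle functors $(F,\omega)\xrightarrow{\sim}(G,\omega^G)$; in particular $(G,\omega^G)$ sends exact triangles to exact triangles and is a triangle autoequivalence. The identity $G(X)=X$ holds by construction, and for any morphism $f\colon A\to B$ in $\mathcal{C}$ the specific choice $\theta_A=\eta_A$, $\theta_B=\eta_B$ combined with the naturality of $\eta$ yields $G(f)=\eta_B\circ F(f)\circ\eta_A^{-1}=f$, so $G|_\mathcal{C}={\rm Id}_\mathcal{C}$ strictly. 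Hence $G$ is a pseudo-identity isomorphic to $F$.

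The single point requiring care will be this last step: securing the \emph{strict} equality $G|_\mathcal{C}={\rm Id}_\mathcal{C}$ demanded by Definition~\ref{defn:pseudo} rather than merely a natural isomorphism. This is exactly what forces the choice of $\theta$ on the core to extend the fixed $\eta$; without that constraint one would only recover a functor isomorphic to the identity on $\mathcal{C}$. No further obstacle should arise, since the transport of a triangulated structure along a pointwise isomorphism to an endofunctor that is prescribed on objects is entirely standard.
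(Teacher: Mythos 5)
Your proposal is correct and follows essentially the same route as the paper: fix $\eta\colon F|_\mathcal{C}\to {\rm Id}_\mathcal{C}$, choose isomorphisms $\theta_X\colon F(X)\to X$ (taken to be $\eta_X$ on the core, which is exactly what secures the strict equality $G|_\mathcal{C}={\rm Id}_\mathcal{C}$), and transport $F$ and its connecting isomorphism along $\theta$ to a pseudo-identity $(G,\omega^G)$. The only cosmetic difference is that the paper explicitly invokes the axiom of global choice for the class-indexed family $\{\theta_X\}$, a point your "choose for each object" silently uses.
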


\begin{proof}
The ``if" part is clear. To prove the ``only if" part, we assume that $(F, \omega)$ is the given triangle endofunctor. Denote by $\eta\colon F|_\mathcal{C}\rightarrow {\rm Id}_\mathcal{C}$ an isomorphism of endofunctors. For each object $X$ in $\mathcal{T}$, we fix an arbitrary isomorphism $\phi_X\colon F(X)\rightarrow X$; moreover, we choose $\phi_X$ to be $\eta_X$ when $X$ belongs to $\mathcal{C}$. Here, we use the  axiom of global choice, that is, the axiom of choice for classes.

We define a triangle endofunctor $(G, \omega')$ on $\mathcal{T}$ as follows. For each object $X$, we set $G(X)=X$; for any morphism $f\colon X\rightarrow Y$, we set $G(f)=\phi_Y\circ F(f)\circ (\phi_X)^{-1}$. Moreover, we define $\omega'\colon G\Sigma\rightarrow \Sigma G$ such that $\omega'_X=\Sigma(\phi_X) \circ \omega_X\circ (\phi_{\Sigma(X)})^{-1}$. It is routine to check that $(G, \omega')$ is a pseudo-identity, which is isomorphic to $(F, \omega)$ as a triangle functor.
\end{proof}

The following criterion is based on Section~\ref{sec:triv-act}.

\begin{thm}\label{thm:pseudo}
Let $\mathcal{T}$ be a bi-complete triangulated category with a pre-weight structure $(\mathcal{U}_{\geq 0}, \mathcal{U}_{\leq 0})$, and let $F\colon \mathcal{T}\rightarrow \mathcal{T}$ be a bi-continuous triangle endofunctor. Assume that the following conditions are fulfilled.
\begin{enumerate}
\item The pre-weight structure $(\mathcal{U}_{\geq 0}, \mathcal{U}_{\leq 0})$ is two-sided convergent and has finite dimension.
\item We have ${\rm Hom}_\mathcal{T}(\mathcal{C}, \Sigma^{-n}(\mathcal{C}))=0$ for any $n\geq 1$.
\item  We have $\mathcal{U}_{\leq 0}\subseteq {\rm Loc}\langle \mathcal{U}_b\rangle$ and $\mathcal{U}_{\geq 0}\subseteq {\rm coLoc}\langle \mathcal{U}_b\rangle$.
\item There is a natural isomorphism $\eta_A\colon F(A)\simeq A$ for each object $A\in \mathcal{C}$.
\item The restriction $F|_{\mathcal{U}_b}\colon \mathcal{U}_b\rightarrow \mathcal{T}$ is fully faithful.
\end{enumerate}
Then $F$ is isomorphic to a pseudo-identity on $\mathcal{T}$.
\end{thm}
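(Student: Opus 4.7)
The plan is to verify the three characterizations in Lemma~\ref{lem:pseudo}: that $F$ is a triangle autoequivalence, that $F(X)\simeq X$ on objects, and that $F|_\mathcal{C}\simeq {\rm Id}_\mathcal{C}$. The third is given as hypothesis (4), so the work is to establish the first two, and the natural route is to deduce both from Theorems~\ref{thm:trivial-action} and~\ref{thm:extend-ff}.

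First I would apply Theorem~\ref{thm:trivial-action}. Its hypotheses are: $\mathcal{T}$ bi-complete, $F$ bi-continuous, the pre-weight structure two-sided convergent, and the vanishing ${\rm Hom}_\mathcal{T}(\mathcal{C},\Sigma^{-n}(\mathcal{C}))=0$ for $n\geq 1$. These are supplied by our standing assumptions together with (1) and (2); hypothesis (4) gives the natural isomorphism on $\mathcal{C}$ required by that theorem. The conclusion is that $F(X)\simeq X$ for every object $X\in \mathcal{T}$. This already takes care of essential surjectivity as well as the ``object-wise'' condition in Lemma~\ref{lem:pseudo}.

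Next I would verify that $F$ is bounded with respect to $(\mathcal{U}_{\geq 0},\mathcal{U}_{\leq 0})$ in the sense of Section~\ref{sec:triv-act}. Let $d$ be the finite dimension guaranteed by (1), so ${\rm Hom}_\mathcal{T}(\mathcal{U}_{\geq 0},\mathcal{U}_{\leq -(d+1)})=0$. For any $X\in \mathcal{U}_{\geq 0}$ and $Y\in \mathcal{U}_{\leq 0}$, the connecting isomorphism $\omega^{d+1}$ gives
\[
{\rm Hom}_\mathcal{T}(F(X),\Sigma^{d+1}F(Y))\;\simeq\;{\rm Hom}_\mathcal{T}(F(X),F(\Sigma^{d+1}Y)),
\]
and the previous paragraph gives isomorphisms $F(X)\simeq X$ and $F(\Sigma^{d+1}Y)\simeq \Sigma^{d+1}Y$, so this group is isomorphic to ${\rm Hom}_\mathcal{T}(X,\Sigma^{d+1}Y)=0$. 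Thus $F$ is bounded, with $n_0=d+1$. Now Theorem~\ref{thm:extend-ff} applies: its four hypotheses are precisely (1), (3), the boundedness just checked, and (5). We conclude that $F$ is fully faithful.

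Combining fully faithfulness with the essential surjectivity already obtained, $F$ is a triangle autoequivalence. Together with hypothesis (4), Lemma~\ref{lem:pseudo} then yields that $F$ is isomorphic to a pseudo-identity. The only step that is not a direct citation is the boundedness check, and it is a short calculation; so I do not expect any serious obstacle, the real content of the theorem being packed into the two results of Section~\ref{sec:triv-act}.
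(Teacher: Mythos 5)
Your proposal is correct and follows the same route as the paper's own proof: Theorem~\ref{thm:trivial-action} for the object-wise isomorphisms and density, then boundedness from finite dimension, then Theorem~\ref{thm:extend-ff} for fully-faithfulness, and finally Lemma~\ref{lem:pseudo}. Your explicit check that $F$ is bounded with $n_0=d+1$ (using $F(X)\simeq X$ and the connecting isomorphism) simply spells out a step the paper leaves implicit.
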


\begin{proof}
In view of (1), (2) and (4), we can apply Theorem~\ref{thm:trivial-action} and infer that $F(X)\simeq X$ for each object $X$. In particular, the functor  $F$ is dense. Since $(\mathcal{U}_{\geq 0}, \mathcal{U}_{\leq 0})$ is finite dimensional, it follows that the functor $F$ is bounded with respect to $(\mathcal{U}_{\geq 0}, \mathcal{U}_{\leq 0})$. In view of  (3) and (5),  we  can apply Theorem~\ref{thm:extend-ff} and  infer that $F$ is full faithful. Consequently, $F$ is an autoequivalence. Now the result follows immediately from Lemma~\ref{lem:pseudo}.
\end{proof}

The homotopy category and derived category of unbounded complexes are our main concern. Let $\mathcal{A}$ be an additive category. Recall from Example~\ref{exm:K} that $\mathbf{K}(\mathcal{A})$ has a canonical pre-weight structure $(\mathcal{K}_{\geq 0}, \mathcal{K}_{\leq 0})$. By a pseudo-identity on $\mathbf{K}(\mathcal{A})$, we mean a pseudo-identity on $\mathbf{K}(\mathcal{A})$ with respect to $(\mathcal{K}_{\geq 0}, \mathcal{K}_{\leq 0})$.

\begin{prop}\label{prop:pseudo-K}
Assume that $\mathcal{A}$ has arbitrary coproducts and products, and that $F\colon \mathbf{K}(\mathcal{A})\rightarrow \mathbf{K}(\mathcal{A})$ is a bi-continuous triangle endofunctor. Assume further that $F(\mathcal{A})\subseteq \mathcal{A}$ and that $F|_\mathcal{A}$ is isomorphic to ${\rm Id}_\mathcal{A}$. Then $F$ is isomorphic to a pseudo-identity on $\mathbf{K}(\mathcal{A})$.
\end{prop}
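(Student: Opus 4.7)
The strategy is to apply Theorem~\ref{thm:pseudo} to the canonical pre-weight structure $(\mathcal{K}_{\geq 0}, \mathcal{K}_{\leq 0})$ on $\mathbf{K}(\mathcal{A})$, so the task reduces to verifying the five hypotheses of that theorem. Hypotheses (1) and (3) are almost immediate: by Example~\ref{exm:K} the canonical pre-weight structure has dimension $0$, and Lemma~\ref{lem:can-pws-K} supplies both the two-sided convergence demanded by (1) and the inclusions $\mathcal{K}_{\leq 0}\subseteq {\rm Loc}\langle \mathcal{K}_b\rangle$ and $\mathcal{K}_{\geq 0}\subseteq {\rm coLoc}\langle \mathcal{K}_b\rangle$ required by (3). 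Hypothesis (2) uses the identification $\mathcal{C}\simeq \mathcal{A}^\natural$ from (\ref{equ:core-K}) together with the equivalence $\mathbf{K}(\mathcal{A})\simeq \mathbf{K}(\mathcal{A}^\natural)$ of (\ref{equ:iota}): for objects of $\mathcal{A}^\natural$ viewed as stalk complexes concentrated in degree zero, there are no non-zero homotopy classes of chain maps between their different shifts, so ${\rm Hom}_{\mathbf{K}(\mathcal{A})}(\mathcal{C}, \Sigma^{-n}(\mathcal{C}))=0$ for all $n\geq 1$.

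For hypothesis (4), the given natural isomorphism $\eta\colon F|_\mathcal{A}\simeq {\rm Id}_\mathcal{A}$ extends canonically to the core. Indeed, any $X\in \mathcal{C}$ is isomorphic in $\mathbf{K}(\mathcal{A})$ to the complex (\ref{equ:non-pos}) associated to some $(A, e)\in \mathcal{A}^\natural$; the additive functor $F$ sends this complex to the analogous complex associated to $(F(A), F(e))$, and the naturality identity $\eta_A\circ F(e) = e\circ \eta_A$ shows that $\eta_A$ restricts to a componentwise chain isomorphism $F(X)\to X$ which is natural in $X$.

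The main obstacle is hypothesis (5), the fully-faithfulness of $F|_{\mathcal{K}_b}$. I would first establish fully-faithfulness on $\mathcal{C}$: any Hom between $(A, e), (A', e')\in \mathcal{C}$ is the direct summand of ${\rm Hom}_{\mathbf{K}(\mathcal{A})}(A, A')$ cut out by pre- and post-composition with $e$ and $e'$, and since $F|_\mathcal{A}\simeq {\rm Id}_\mathcal{A}$ is an equivalence, $F$ maps this summand isomorphically to the analogous summand of ${\rm Hom}_{\mathbf{K}(\mathcal{A})}(F(A), F(A'))$. Combined with the vanishing in (2) applied on both sides of $F$, this yields that $F$ is fully faithful on $\bigcup_{n\in\mathbb{Z}}\Sigma^n\mathcal{C}$. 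A standard five-lemma argument then propagates fully-faithfulness along exact triangles: for fixed $Y\in \mathcal{C}$, the full subcategory $\{X\in \mathbf{K}(\mathcal{A})\mid F_{X, \Sigma^n Y}$ is an isomorphism for all $n\in\mathbb{Z}\}$ is triangulated and contains $\mathcal{C}$, hence contains ${\rm tri}\langle \mathcal{C}\rangle=\mathcal{K}_b$ by Lemma~\ref{lem:pws}(6); a symmetric argument in $Y$ extends this to fully-faithfulness on $\mathcal{K}_b$. With (1)--(5) all verified, Theorem~\ref{thm:pseudo} delivers the desired conclusion.
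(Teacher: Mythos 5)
Your overall strategy is exactly the paper's: verify the five hypotheses of Theorem~\ref{thm:pseudo} for the canonical pre-weight structure, using Example~\ref{exm:K} and Lemma~\ref{lem:can-pws-K} for (1)--(3). For hypothesis (5) you differ from the paper only in that you reprove the needed fully-faithfulness on $\mathcal{K}_b={\rm tri}\langle\mathcal{C}\rangle$ by a d\'{e}vissage/five-lemma argument, whereas the paper identifies $\mathcal{K}_b$ with $\mathbf{K}^b(\mathcal{A})$ (idempotent completeness, via the proof of \cite[Proposition~3.2]{BN}) and quotes \cite[Lemma~3.1(1)]{CY}; your version is self-contained and correct, with the minor caveat that you should invoke the vanishing of ${\rm Hom}_{\mathbf{K}(\mathcal{A})}(\mathcal{C},\Sigma^{n}(\mathcal{C}))$ for \emph{all} $n\neq 0$ (both directions hold trivially for retracts of stalk complexes), not just the one-sided vanishing recorded in hypothesis~(2).

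The one step that does not work as written is your justification of hypothesis (4). You claim that $F$ sends the complex (\ref{equ:non-pos}) attached to $(A,e)$ to ``the analogous complex associated to $(F(A),F(e))$'' and that $\eta_A$ restricts to a componentwise chain isomorphism. But $F$ is an abstract (bi-continuous) triangle endofunctor of $\mathbf{K}(\mathcal{A})$; it does not act componentwise on complexes, so $F(X)$ is just some object of $\mathbf{K}(\mathcal{A})$ with no preferred componentwise description, and ``$\eta_A$ restricts'' has no meaning there. The statement you want is nevertheless true and the repair is the standard retract argument: the complex (\ref{equ:non-pos}) is precisely the image of the idempotent $e$ on the stalk complex $A$, so there are morphisms $i\colon X\rightarrow A$ and $p\colon A\rightarrow X$ in $\mathbf{K}(\mathcal{A})$ with $p\circ i={\rm Id}_X$ and $i\circ p=e$; setting $\eta_X:=p\circ\eta_A\circ F(i)$ and using the naturality of $\eta$ on $\mathcal{A}$ (applied to morphisms of the form $i'\circ f\circ p$ and to $e$ itself) one checks that $\eta_X$ is an isomorphism, natural in $X\in\mathcal{C}$, i.e. the given isomorphism $F|_{\mathcal{A}}\simeq{\rm Id}_{\mathcal{A}}$ extends canonically to the Karoubi envelope $\mathcal{C}\simeq\mathcal{A}^{\natural}$. (The paper simply declares (4) ``immediate from the assumptions''; this is the argument behind that remark.) With that substitution your proof is complete; note also that once (4) is correctly in place, fully-faithfulness of $F$ on $\mathcal{C}$ itself follows formally from the natural isomorphism, so your separate ``direct summand of ${\rm Hom}(A,A')$'' argument becomes unnecessary.
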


\begin{proof}
We prove the result by verifying the conditions in Theorem~\ref{thm:pseudo}. The canonical pre-weight structure $(\mathcal{K}_{\geq 0}, \mathcal{K}_{\leq 0})$ is of dimension $0$. In view of Lemma~\ref{lem:can-pws-K}, (1) and (3) hold. We identify the core with $\mathcal{A}^\natural$. Then Condition (2) is trivial, and (4) is immediate from the assumptions. For (5), we identify $\mathcal{K}_b$ with $\mathbf{K}^b(\mathcal{A})$, since  the latter category is idempotent-complete; consult the proof of \cite[Proposition~3.2]{BN}. By \cite[Lemma~3.1(1)]{CY}, the restricted functor $F|_{\mathbf{K}^b(\mathcal{A})}\colon \mathbf{K}^b(\mathcal{A})\rightarrow \mathbf{K}(\mathcal{A})$ is fully faithful.
\end{proof}

 Let $\mathcal{A}$ be an abelian category. Recall from Example~\ref{exm:D} the canonical pre-weight structure $(\mathcal{D}_{\geq 0}, \mathcal{D}_{\leq 0})$ on $\mathbf{D}(\mathcal{A})$. By a pseudo-identity on $\mathbf{D}(\mathcal{A})$, we mean a pseudo-identity on $\mathbf{D}(\mathcal{A})$ with respect to $(\mathcal{D}_{\geq 0}, \mathcal{D}_{\leq 0})$.

 \begin{prop}\label{prop:pseudo-D}
Assume that $\mathcal{A}$ is an abelian category with arbitrary coproducts and products such that coproducts and products of exact sequences are still exact. Let $F\colon \mathbf{D}(\mathcal{A})\rightarrow \mathbf{D}(\mathcal{A})$ is a bi-continuous triangle endofunctor. Assume further that $F(\mathcal{A})\subseteq \mathcal{A}$ and that $F|_\mathcal{A}$ is isomorphic to ${\rm Id}_\mathcal{A}$. Then $F$ is isomorphic to a pseudo-identity on $\mathbf{D}(\mathcal{A})$, provided that one of the following conditions holds.
\begin{enumerate}
\item The category $\mathcal{A}$ has enough projective objects or enough injective objects, and ${\rm gl.dim}(\mathcal{A})$ is finite.
\item The given endofunctor $F$ is an autoequivalence.
\end{enumerate}
 \end{prop}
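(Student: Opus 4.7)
The plan is to treat the two hypotheses in parallel, using the canonical pre-weight structure $(\mathcal{D}_{\geq 0}, \mathcal{D}_{\leq 0})$ on $\mathbf{D}(\mathcal{A})$ whose core, by Example~\ref{exm:D}, is $\mathcal{A}$ and whose bounded part $\mathcal{D}_b$ is $\mathbf{D}^b(\mathcal{A})$. Under (1) I intend to verify all five hypotheses of Theorem~\ref{thm:pseudo}; under (2), where we already know $F$ is a triangle autoequivalence, I would bypass the finite-dimension requirement and derive the conclusion directly from Theorem~\ref{thm:trivial-action} together with Lemma~\ref{lem:pseudo}.

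For case~(1), the pre-weight structure is two-sided convergent by Lemma~\ref{lem:can-pws-D}(1), and it has finite dimension by Lemma~\ref{lem:can-pws-D-fd} since $\mathcal{A}$ has enough projectives or injectives and finite global dimension; this gives condition~(1). The Hom-vanishing ${\rm Hom}_{\mathbf{D}(\mathcal{A})}(A, \Sigma^{-n}B) = 0$ for $A, B \in \mathcal{A}$ and $n \geq 1$ is standard, yielding condition~(2). Condition~(3) is exactly Lemma~\ref{lem:can-pws-D}(2), and condition~(4) is included in the hypotheses. For condition~(5), I would exploit Lemma~\ref{lem:pws}(6), which identifies $\mathbf{D}^b(\mathcal{A}) = {\rm tri}\langle \mathcal{A}\rangle$; since $F$ is a triangle functor and $F_{A, B}\colon {\rm Hom}_{\mathbf{D}(\mathcal{A})}(A, \Sigma^n B)\to {\rm Hom}_{\mathbf{D}(\mathcal{A})}(F(A), \Sigma^n F(B))$ is an isomorphism for $A, B \in \mathcal{A}$ and any $n\in \mathbb{Z}$ (via the natural isomorphism $F|_\mathcal{A} \simeq {\rm Id}_\mathcal{A}$ and the connecting isomorphism $\omega$), a five-lemma induction on the number of cones needed to construct objects of $\mathbf{D}^b(\mathcal{A})$ from shifts of objects of $\mathcal{A}$ promotes this to full faithfulness of $F|_{\mathbf{D}^b(\mathcal{A})}$. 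Then Theorem~\ref{thm:pseudo} concludes.

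For case~(2), I would aim straight for Lemma~\ref{lem:pseudo}: the functor $F$ is already a triangle autoequivalence by hypothesis, and $F|_\mathcal{A} \simeq {\rm Id}_\mathcal{A}$ is given, so the only remaining point is to show $F(X) \simeq X$ for every $X \in \mathbf{D}(\mathcal{A})$. This is exactly the conclusion of Theorem~\ref{thm:trivial-action}, whose hypotheses hold: $\mathbf{D}(\mathcal{A})$ is bi-complete by the assumption on $\mathcal{A}$; $F$ is bi-continuous by hypothesis; two-sided convergence of the canonical pre-weight structure comes from Lemma~\ref{lem:can-pws-D}(1); and the Hom-vanishing is the same standard statement as above. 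Crucially, Theorem~\ref{thm:trivial-action} does not require finite dimension of the pre-weight structure, which is why the finite global dimension assumption can be dropped in this case.

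The main obstacle is condition~(5) of Theorem~\ref{thm:pseudo} in case~(1): upgrading the given natural isomorphism $F|_\mathcal{A} \simeq {\rm Id}_\mathcal{A}$ to genuine Hom-isomorphisms $F_{X, Y}$ for arbitrary $X, Y \in \mathbf{D}^b(\mathcal{A})$. The delicate point is that one must commute the devissage with both the connecting isomorphism $\omega$ and the naturality of $\eta$, so that the inductively produced isomorphisms $F(X) \simeq X$ are \emph{natural} enough to feed the five-lemma step; this is where the analogue of \cite[Lemma~3.1(1)]{CY}, used earlier in Proposition~\ref{prop:pseudo-K}, must be formulated and verified for the derived setting rather than the homotopy setting.
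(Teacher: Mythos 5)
Your overall architecture is the same as the paper's: for case~(1) you verify the five conditions of Theorem~\ref{thm:pseudo}, and for case~(2) you combine Theorem~\ref{thm:trivial-action} with Lemma~\ref{lem:pseudo}. Conditions (1)--(4) and the whole of case~(2) are handled exactly as in the paper and are fine.

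The gap is in your treatment of condition~(5) in case~(1). You assert parenthetically that $F_{A,\Sigma^n B}\colon {\rm Hom}_{\mathbf{D}(\mathcal{A})}(A,\Sigma^n B)\rightarrow {\rm Hom}_{\mathbf{D}(\mathcal{A})}(F(A),\Sigma^n F(B))$ is an isomorphism for all $A,B\in\mathcal{A}$ and all $n\in\mathbb{Z}$ ``via the natural isomorphism $F|_{\mathcal{A}}\simeq{\rm Id}_{\mathcal{A}}$ and the connecting isomorphism $\omega$''. That is precisely the nontrivial point, and it does not follow formally from naturality: the isomorphism $F|_{\mathcal{A}}\simeq{\rm Id}_{\mathcal{A}}$ only controls degree-zero morphisms, whereas for $n\geq 1$ the group ${\rm Hom}_{\mathbf{D}(\mathcal{A})}(A,\Sigma^n B)\simeq{\rm Ext}^n_{\mathcal{A}}(A,B)$ is in general nonzero, and there is no formal reason why the map induced by $F$ on it is bijective. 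This is exactly what fails to be automatic when one passes from the homotopy category (where these groups vanish, so the analogue of \cite[Lemma~3.1(1)]{CY} is easy) to the derived category. One genuinely has to argue, for instance: for $n=1$, that the corrected map $\epsilon\mapsto \Sigma(\eta_B)\circ\omega_B\circ F(\epsilon)\circ\eta_A^{-1}$ fixes the class of every short exact sequence, using that the connecting morphism of a triangle $B\rightarrow E\rightarrow A\rightarrow\Sigma(B)$ with prescribed first two arrows is unique because ${\rm Hom}_{\mathbf{D}(\mathcal{A})}(\Sigma(B), A)=0$; and then for $n\geq 2$ either splice Yoneda extensions and check multiplicativity of the corrected action, or do dimension shifting using the enough-projectives/injectives hypothesis. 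The paper does not redo this work: it quotes \cite[Lemma~3.6]{CY}, which is exactly the derived-category statement you acknowledge ``must be formulated and verified'' but do not supply. So, as written, your verification of condition~(5) is incomplete. Note also that the difficulty is not where you locate it: the five-lemma devissage from shifts of $\mathcal{A}$ to $\mathbf{D}^b(\mathcal{A})$ is the routine part (an induction on the length of a bounded complex via truncation triangles needs only the functoriality of $F$), while the missing content sits one step earlier, at the level of the Ext-groups between objects of $\mathcal{A}$.
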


 \begin{proof}
 By Lemma~\ref{lem:can-pws-D}, the pre-weight structure $(\mathcal{D}_{\geq 0}, \mathcal{D}_{\leq 0})$ is two-sided convergent and satisfies Condition (3) in Theorem~\ref{thm:pseudo}. Moreover, it is well known that ${\rm Hom}_{\mathbf{D}(\mathcal{A})}(\mathcal{A}, \Sigma^{-n}(\mathcal{A}))=0$ for $n\geq 1$. Then we apply Theorem~\ref{thm:trivial-action} to infer that $F(X)\simeq X$ for any complex $X$. In presence of (2), we are done by Lemma~\ref{lem:pseudo}.

 If (1) holds, we observe by Lemma~\ref{lem:can-pws-D-fd} that $(\mathcal{D}_{\geq 0}, \mathcal{D}_{\leq 0})$ has finite dimension. In view of \cite[Lemma~3.6]{CY}, we observe that $F|_{\mathbf{D}^b(\mathcal{A})}\colon \mathbf{D}^b(\mathcal{A})\rightarrow \mathbf{D}(\mathcal{A})$ is fully faithful. Consequently, all the conditions of Theorem~\ref{thm:pseudo} are all fulfilled.
 \end{proof}

 \begin{rem}
 In comparison with Proposition~\ref{prop:pseudo-K}, we do not know whether Condition (1) or (2) in Proposition~\ref{prop:pseudo-D} can be removed.
 \end{rem}

 The following result might be viewed as a unbounded version of \cite[Corollary~5.6]{CY}.

\begin{prop}\label{prop:hereditary+ps}
Let $\mathcal{H}$ be a hereditary abelian category. Then any pseudo-identity on $\mathbf{D}(\mathcal{H})$ is isomorphic to the identity functor.
\end{prop}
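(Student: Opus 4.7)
My approach is to bootstrap from the bounded analogue \cite[Corollary~5.6]{CY}, which handles pseudo-identities on $\mathbf{D}^b(\mathcal{H})$, and then propagate the resulting natural isomorphism to the full derived category $\mathbf{D}(\mathcal{H})$, using that the hereditary hypothesis severely constrains the morphisms between complexes.

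Let $F$ be a pseudo-identity on $\mathbf{D}(\mathcal{H})$. Its restriction to $\mathbf{D}^b(\mathcal{H})$ is again a pseudo-identity with respect to the induced canonical weight structure (whose core is still $\mathcal{H}$), so \cite[Corollary~5.6]{CY} gives a natural isomorphism of triangle functors $F|_{\mathbf{D}^b(\mathcal{H})} \simeq {\rm Id}_{\mathbf{D}^b(\mathcal{H})}$. Applying the construction in the proof of Lemma~\ref{lem:pseudo}---choosing the auxiliary isomorphisms $\phi_X$ on bounded objects to be the components of this natural isomorphism, and arbitrarily elsewhere---I replace $F$ by an isomorphic triangle endofunctor (still a pseudo-identity) that strictly equals the identity triangle functor on $\mathbf{D}^b(\mathcal{H})$. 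It remains to prove that such a normalised $F$ coincides with ${\rm Id}_{\mathbf{D}(\mathcal{H})}$ up to natural isomorphism.

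Here is where hereditariness enters crucially. For stalks $H, H' \in \mathcal{H}$ the Hom groups ${\rm Hom}_{\mathbf{D}(\mathcal{H})}(H, \Sigma^n H') = {\rm Ext}^n(H, H')$ vanish for $n \geq 2$, and consequently every bounded complex splits as a direct sum of its cohomologies $\bigoplus_n H^n(X)[-n]$, while each morphism between bounded complexes reduces to a matrix whose components live in degrees $0$ and $1$. Since the normalised $F$ is strictly the identity on all such data and since $F$ is a triangle functor fixing every object on the nose, one first checks that $F$ acts as the identity on any morphism $f\colon X \to Y$ with at least one of $X, Y$ bounded. Then, using the canonical truncation triangles $\tau_{\leq n}X \to X \to \tau_{>n}X \to \Sigma \tau_{\leq n}X$ and the fact that $F$ fixes their terms as objects and acts trivially on those components which become bounded after further truncation, one obtains by induction a natural isomorphism $\eta\colon F \to {\rm Id}$ that extends the identity from $\mathbf{D}^b(\mathcal{H})$.

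The main obstacle will be the coherent assembly of $\eta$: in particular, verifying naturality for morphisms whose source and target both have genuinely unbounded cohomological support, and confirming that the connecting isomorphism $\omega\colon F\Sigma \to \Sigma F$ is simultaneously trivialised. The rigidity of the Postnikov-style gluing data in the hereditary setting---reflecting the vanishing of ${\rm Ext}^{\geq 2}$---is what makes the compatibility check go through, because every new piece of gluing data is itself a morphism captured by the bounded subcategory on which $F$ already agrees with the identity by construction.
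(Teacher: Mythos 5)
Your reduction to the bounded case via \cite[Corollary~5.6]{CY} and the normalisation through the construction of Lemma~\ref{lem:pseudo} are fine as far as they go, but the step you yourself flag as ``the main obstacle'' is exactly where the proof is missing, not merely delicate. Induction on canonical truncations cannot reach a genuinely unbounded complex: $\tau_{\leq n}X$ is still unbounded below, so the induction never terminates, and a morphism between two complexes with unbounded cohomological support is not ``captured by the bounded subcategory'' in any sense you have established --- without further input one faces the usual homotopy-limit/$\varprojlim^1$-type obstructions to assembling compatible bounded data into a morphism, let alone into a coherent natural transformation. The assertion that ``the rigidity of the Postnikov-style gluing data'' makes the compatibility check go through is precisely the content that needs proof, and it is the whole point of the unbounded statement; note also that you only invoke the decomposition into cohomologies for \emph{bounded} complexes, where it does not help with the problematic morphisms.

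What actually makes the extension work, and what the paper uses, is the stronger structural fact that over a hereditary abelian category \emph{every} object of $\mathbf{D}(\mathcal{H})$ decomposes as $\coprod_{n}\Sigma^{-n}(H^{n}(X))$, with this coproduct canonically isomorphic to the corresponding product \cite[Theorem~A.1]{PS}. Granting this, morphisms between arbitrary complexes are matrices of morphisms between shifted stalk complexes, and a natural isomorphism defined on the subcategory $\mathcal{C}=\bigcup_{n\in\mathbb{Z}}\Sigma^{n}(\mathcal{H})$ extends uniquely to all of $\mathbf{D}(\mathcal{H})$ by Lemma~\ref{lem:extend} (conditions (S2)--(S3) hold because $F$, being an autoequivalence, preserves all coproducts and products); the uniqueness of the extension is also what settles the compatibility with the connecting isomorphism $\omega$, which you leave open. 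The paper does not even need \cite[Corollary~5.6]{CY}: it builds the isomorphism on $\mathcal{C}$ directly from the iterated connecting isomorphisms $\omega^{n}$, checking naturality via hereditariness and \cite[Proposition~5.5]{CY}. If you replace your truncation induction by this splitting of unbounded complexes together with an extension lemma of the type of Lemma~\ref{lem:extend}, your bootstrap from the bounded case can be completed; as written, the passage from $\mathbf{D}^{b}(\mathcal{H})$ to $\mathbf{D}(\mathcal{H})$ is a genuine gap.
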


\begin{proof}
Assume that $(F, \omega)\colon \mathbf{D}(\mathcal{H})\rightarrow \mathbf{D}(\mathcal{H})$ is a pseudo-identity. Recall that for all $n\geq 0$, the natural isomorphisms $\omega^n\colon F\Sigma^n\rightarrow \Sigma^n F$ are defined such that $\omega^0={\rm Id}_F$, $\omega^1=\omega$ and $\omega^{n+1}=(\Sigma\omega^{n})\circ \omega{\Sigma^n}$. Furthermore, we define natural isomorphisms $\omega^{-n}\colon F\Sigma^{-n}\rightarrow \Sigma^{-n}F$ for $n\geq 1$ such that $\omega^{-1}=(\Sigma^{-1}\omega{\Sigma^{-1}})^{-1}$ and $\omega^{-n-1}=(\Sigma^{-n}\omega^{-1})\circ (\omega^{-n}{\Sigma^{-1}})$. For any integers $m$ and $n$, we have
\begin{align}\label{equ:here1}
    \omega^m=(\Sigma^n\omega^{m-n})\circ (\omega^n\Sigma^{m-n}).
\end{align}

Set $\mathcal{C}=\bigcup_{n\in \mathbb{Z}}\Sigma^n(\mathcal{H})$, which is viewed as a full subcategory of $\mathbf{D}(\mathcal{H})$. We will define a natural isomorphism $\eta\colon F|_{\mathcal{C}}\rightarrow {\rm Id}_\mathcal{C}$. For any object $\Sigma^n(A)\in \mathcal{C}$ with $A\in \mathcal{H}$, we define
$$\eta_{\Sigma^n(A)}=\omega^n_A\colon F(\Sigma^n(A))\longrightarrow \Sigma^n F(A)=\Sigma^n(A).$$
For the naturalness of $\eta$, we take a morphism $\Sigma^n(A)\rightarrow \Sigma^m(B)$. Since $\mathcal{H}$ is hereditary, we may assume that $m=n$ or $m=n+1$. The morphism  will be assumed to be $\Sigma^n(\xi)$ for $\xi\colon A\rightarrow \Sigma^{m-n}(B)$. We infer the required naturalness from the following identity.
\begin{align*}
\omega^m_B\circ F\Sigma^n(\xi) &= \Sigma^n (\omega^{m-n}_B)\circ \omega^n_{\Sigma^{m-n}(B)}\circ F\Sigma^n(\xi)\\
&=\Sigma^n(\omega^{m-n}_B) \circ \Sigma^n F(\xi)\circ \omega_A^n\\
&=\Sigma^n(\xi)\circ \omega_A^n
\end{align*}
Here, the first equality uses (\ref{equ:here1}), the second one uses the naturalness of $\omega^n$ and the last one uses $\xi=\omega^{m-n}_B\circ F(\xi)$, which is a special case of \cite[Proposition~5.5]{CY}.

It is well known that each object $X$ in $\mathbf{D}(\mathcal{H})$ is isomorphic to $\coprod_{n\in \mathbb{Z}} \Sigma^{-n}(H^{n}(X))$, where the latter is canonically isomorphic to the product $\prod_{n\in \mathbb{Z}} \Sigma^{-n}(H^{n}(X))$; see \cite[Theorem~A.1]{PS}. Applying Lemma~\ref{lem:extend} below, we extend $\eta$ to a natural isomorphism $\eta'\colon F\rightarrow {\rm Id}_{\mathbf{D}(\mathcal{H})}$. Moreover, since $\eta\Sigma=(\Sigma \eta)\circ \omega$, the uniqueness of the extension implies that $\eta'\Sigma=(\Sigma \eta')\circ \omega$. We infer that $\eta'$ is a natural isomorphism between the two triangle functors $(F, \omega)$ and $({\rm Id}_{\mathbf{D}(\mathcal{H})}, {\rm Id}_\Sigma)$.
\end{proof}

We need a general result on extending natural transformations. Let $F, G\colon \mathcal{A}\rightarrow \mathcal{B}$ be two additive functors between two additive categories, and let $\mathcal{C}$ be a full subcategory of $\mathcal{A}$.  For each object $A$ in $\mathcal{A}$, we assume that there is a \emph{special} isomorphism with respect to $(\mathcal{C}; F, G)$.
 $$\xi_A\colon A\stackrel{\sim}\longrightarrow \coprod_{i\in I}C_i$$
The speciality means the following facts:  these $C_i$'s belong to $\mathcal{C}$ and $I$ is an index set which is not necessarily finite; they are required to satisfy the following conditions.
 \begin{enumerate}
\item[(S1)]  Both the coproduct $\coprod_{i\in I}C_i$ and product $\prod_{i\in I}C_i$ exist in $\mathcal{A}$, and the canonical morphism $\coprod_{i\in I}C_i\rightarrow \prod_{i\in I}C_i$ is an isomorphism.
\item[(S2)] The coproducts $\coprod_{i\in I}F(C_i)$ and $\coprod_{i\in I}G(C_i)$ exist in $\mathcal{B}$, and the canonical morphisms $\coprod_{i\in I}F(C_i)\rightarrow F(\coprod_{i\in I}C_i)$ and  $\coprod_{i\in I}G(C_i)\rightarrow G(\coprod_{i\in I}C_i)$ are isomorphisms.
\item[(S3)] The products $\prod_{i\in I}F(C_i)$ and $\prod_{i\in I}G(C_i)$ exist in $\mathcal{B}$, and the canonical morphisms $F(\prod_{i\in I}C_i)\rightarrow \prod_{i\in I}F(C_i)$ and  $G(\prod_{i\in I}C_i)\rightarrow \prod_{i\in I}G(C_i)$ are isomorphisms.
 \end{enumerate}
The following result slightly generalizes \cite[Lemma~2.4]{CY} with almost the same proof.

\begin{lem}\label{lem:extend}
 Keep the assumptions above. Let $\eta\colon F|_\mathcal{C}\rightarrow G|_\mathcal{C}$ be a natural transformation. Then there is a unique natural transformation $\eta'\colon F\rightarrow G$ extending $\eta$. If $\eta$ is a natural isomorphism, so is $\eta$.
\end{lem}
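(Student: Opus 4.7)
The plan is to define $\eta'_A$ for every $A \in \mathcal{A}$ by transporting the componentwise transformation across the special isomorphism $\xi_A$. Using (S2), the canonical morphism $\coprod_{i\in I} F(C_i) \to F(\coprod_{i\in I} C_i)$ is an isomorphism, and likewise for $G$; the universal property of the coproduct in $\mathcal{B}$ yields a morphism $\coprod_i \eta_{C_i}\colon \coprod_i F(C_i)\to \coprod_i G(C_i)$. I then set
$$\eta'_A = G(\xi_A)^{-1}\circ \bigl(\coprod_i \eta_{C_i}\bigr)\circ F(\xi_A)\colon F(A)\longrightarrow G(A),$$
silently inserting the isomorphisms from (S2). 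For $A\in \mathcal{C}$, one may choose $\xi_A=\mathrm{Id}_A$ (the one-term coproduct), so $\eta'_A=\eta_A$, which gives the required extension.

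The main labor is naturality. Given $f\colon A\to A'$ with special isomorphisms $\xi_A,\xi_{A'}$, let $g=\xi_{A'}\circ f\circ \xi_A^{-1}\colon \coprod_i C_i\to \coprod_j C'_j$. Via (S1), the target $\coprod_j C'_j$ is simultaneously the product $\prod_j C'_j$, so $g$ is determined by its matrix $g_{ji}=\pi_j\circ g\circ \iota_i\in \mathrm{Hom}_\mathcal{A}(C_i,C'_j)$, a morphism in $\mathcal{C}$. I would then check that
$$G(g)\circ \bigl(\coprod_i\eta_{C_i}\bigr)=\bigl(\coprod_j \eta_{C'_j}\bigr)\circ F(g),$$
by composing on the left with each projection $\pi_j\colon \coprod_j G(C'_j)\to G(C'_j)$, which exists by applying (S3) to identify $\coprod_j G(C'_j)$ with $\prod_j G(C'_j)$, and on the right with each inclusion $\iota_i$; this reduces the identity to the naturality of $\eta$ at the morphism $g_{ji}\colon C_i\to C'_j$ in $\mathcal{C}$, which holds by hypothesis. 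Pulling back through $F(\xi_A),G(\xi_{A'})$ gives $G(f)\circ \eta'_A=\eta'_{A'}\circ F(f)$.

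For uniqueness, suppose $\eta''\colon F\to G$ is another extension. For each $A$ with $\xi_A\colon A\to \coprod_i C_i$, naturality of $\eta''$ applied to the components $\xi_A^{-1}\circ \iota_i\colon C_i\to A$ combined with $\eta''_{C_i}=\eta_{C_i}$ determines $\eta''_A$ after projection to each $G(C_i)$, which by (S3) jointly detects morphisms into $G(A)\cong \prod_i G(C_i)$; thus $\eta''_A=\eta'_A$. For the last claim, if $\eta$ is a natural isomorphism then each $\eta_{C_i}$ is invertible, so $\coprod_i \eta_{C_i}$ is invertible with inverse $\coprod_i\eta_{C_i}^{-1}$, and $\eta'_A$ is a composite of isomorphisms.

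The main obstacle is the bookkeeping for morphisms between infinite (co)products and for the way $F$ and $G$ interact with them; this is precisely where (S1)--(S3) are each used, namely (S1) to pass from coproducts to products in $\mathcal{A}$ to extract matrix components, and (S2)--(S3) to do the same before and after applying $F,G$ in $\mathcal{B}$. Once this is in place, everything is formal and essentially parallel to \cite[Lemma~2.4]{CY}.
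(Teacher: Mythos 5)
Your construction and naturality argument are exactly those of the paper: you transport $\coprod_i\eta_{C_i}$ across the chosen special isomorphisms (with $\xi_C=\mathrm{Id}$ on $\mathcal{C}$) and verify naturality by extracting the matrix components $f_{ji}\in\mathrm{Hom}_\mathcal{A}(C_i,C'_j)$ via (S1)--(S3) and invoking naturality of $\eta$ on $\mathcal{C}$. The proposal is correct, and in fact also spells out the uniqueness and the isomorphism statement, which the paper's written proof leaves implicit.
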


\begin{proof}
For each object $A$ in $\mathcal{A}$, we fix a special isomorphism $\xi_A\colon A\rightarrow \coprod_{i\in I}C_i$ with respect to $(\mathcal{C}; F, G)$. We make the choice such that $\xi_C$ is the identity morphism for each object $C$ in $\mathcal{C}$. Here, we use the axiom of choice for the class of objects in $\mathcal{A}$.

We define $\eta'_A\colon F(A)\rightarrow G(A)$ to be
$$G(\xi_A^{-1})\circ (\coprod_{i\in I}\eta_{C_i})\circ F(\xi_A).$$
Here, we identify $F(\coprod_{i\in I}C_i)$ with $\coprod_{i\in I}F(C_i)$, $G(\coprod_{i\in I}C_i)$ with $\coprod_{i\in I}G(C_i)$; consult (S2). In particular, we have $\eta'_C=\eta_C$ for $C\in \mathcal{C}$ by our choice.

We claim that  the morphism  $\eta'_A$ is natural in $A$. For this, we take an arbitrary morphism $f\colon A\rightarrow A'$ in $\mathcal{A}$. For the object $A'$, we have the chosen special isomorphism $\xi_{A'}\colon A'\rightarrow \coprod_{j\in J }C'_j$. Then we have the following commutative diagram.
\[
\xymatrix{
A\ar[d]_-{f}\ar[rr]^-{\xi_A} && \coprod_{i\in I} C_i\ar[d]^-{(f_{ji})_{(i, j)\in I\times J} }\\
A'\ar[rr]^-{\xi_{A'}} && \prod_{j\in J} C'_j
}
\]
Here, each $f_{ji}\colon C_i\rightarrow C'_j$ is a morphism in $\mathcal{C}$, and we implicitly identify $\coprod_{j\in J}C'_j$ with $\prod_{j\in J} C'_j$; consult (S1).  The middle square in the following commutative diagram uses the naturalness of $\eta$.
\[
\xymatrix{
F(A) \ar[d]_-{F(f)} \ar[r]^-{F(\xi_A)} & \coprod_{i\in I} F(C_i)\ar[d]^-{(F(f_{ji}))}  \ar[rr]^-{\coprod_{i\in I}\eta_{C_i}} && \coprod_{i\in I}G(C_i)  \ar[d]^-{(G(f_{ji}))} \ar[r]^-{G(\xi_A^{-1})} & G(A)\ar[d]^-{G(f)}\\
F(A')  \ar[r]^-{F(\xi_{A'})} & \prod_{j\in J} F(C'_j) \ar[rr]^-{\prod_{j\in J}\eta_{C'_j}} && \prod_{j\in J}G(C'_j) \ar[r]^-{G(\xi_{A'}^{-1})} & G(A')
}\]
The outer commutative  diagram proves that $\eta'_{A'}\circ F(f)=G(f)\circ \eta'_A$, as required.
\end{proof}

\section{Differential graded endomorphism algebras}\label{sec:dg}

From this section on, we  fix a commutative ring $\mathbb{K}$ and will work over $\mathbb{K}$. In other words, we will require all algebras, categories and functors are $\mathbb{K}$-linear.

In this section, we prove that homotopically equivalent complexes have quasi-isomorphic dg endomorphism algebras; see Proposition~\ref{prop:dg}. This result is partly  known to experts, and will be used in Section~\ref{sec:can}.

For a dg algebra $\Lambda$, we denote by $\mathbf{D}(\Lambda)$ the derived category \cite{Kel94} of left dg $\Lambda$-modules. We always view an ordinary algebra as a dg algebra concentrated in degree zero. Let $B$ be an ordinary algebra. Since  left dg $B$-modules coincide with complexes of $B$-modules, we have $\mathbf{D}(B)=\mathbf{D}(B\mbox{-Mod})$. Denote by $\Lambda^{\rm op}$ the opposite dg algebra of $\Lambda$. We identify left dg $\Lambda$-modules with right dg $\Lambda^{\rm op}$-modules.

For two complexes $P=(P^n, d_P^n)_{n\in \mathbb{Z}}$ and $Q=(Q^n, d_Q^n)_{n\in \mathbb{Z}}$ of $B$-modules, the \emph{Hom-complex} ${\rm Hom}_B(P, Q)$ is defined such that its $p$-th component is given by a countable product
$$ {\rm Hom}_B(P, Q)^p=\prod_{n\in \mathbb{Z}}{\rm Hom}_B(P^n, Q^{n+p}),$$
whose elements are denoted by $f=(f^n)_{n\in \mathbb{Z}}$ with $f^n\colon P^n\rightarrow Q^{n+p}$. Therefore, the elements in ${\rm Hom}_B(P, Q)^p$ are viewed as graded homomorphisms of degree $p$. The differential $\partial^p \colon {\rm Hom}_B(P, Q)^p\rightarrow {\rm Hom}_B(P, Q)^{p+1}$ sends $f$ to $\partial(f)$ such that
$$\partial^p(f)^n=d_Q^{p+n}\circ f^n-(-1)^p f^{n+1}\circ d_P^n.$$
When $P=Q$, ${\rm Hom}_B(P, P)={\rm End}_B(P)$ becomes a dg algebra, whose multiplication is induced by composition of graded homomorphisms.  The resulting dg algebra ${\rm End}_B(P)$ is called the \emph{dg endomorphism algebra} of $P$.

We observe that $P$ is naturally a left dg ${\rm End}_B(P)$-module. More precisely, for $f\in {\rm End}_B(P)$ and $x\in P$, the left action $fx$ is given by $f(x)$. Furthermore, we observe that $P$ is naturally a dg $B$-${\rm End}_B(P)^{\rm op}$-bimodule. Similarly, $Q$ is naturally a dg $B$-${\rm End}_B(Q)^{\rm op}$-bimodule.

The folloiwng consideration is taken from \cite[Subsection~2.9]{Dri}. Let $\xi\colon P\rightarrow Q$ be a cochain morphism between complexes of $B$-modules. We consider the following matrix dg algebra associated to $\xi$.
 $$\Lambda=\begin{pmatrix}
 {\rm End}_B(Q) & \Sigma^{-1}{\rm Hom}_B(P, Q)\\
 0 & {\rm End}_B(P)
 \end{pmatrix}$$
 A typical element of degree $p$ in $\Lambda$ is given by
 $\begin{pmatrix}
 g & s^{-1}h\\
 0 & f
 \end{pmatrix}$
with $g\in  {\rm End}_B(Q)^p$, $h\in {\rm Hom}_B(P, Q)^{p-1}$ and $f\in {\rm End}_B(P)^p$.  Here, $s^{-1}$ is viewed as a symbol of degree one, and we have $s^{-1}h\in (\Sigma^{-1} {\rm Hom}_B(P, Q))^p$. The multiplication is as follows.
 $$\begin{pmatrix}
 g & s^{-1}h\\
 0 & f
 \end{pmatrix} \begin{pmatrix}
 g' & s^{-1}h'\\
 0 & f'
 \end{pmatrix}=\begin{pmatrix}
 g\circ g' & s^{-1}((-1)^{|g|}g\circ h'+h\circ f')\\
 0 & f\circ f'
 \end{pmatrix}.$$
 Here, $|g|$ denotes the degree of $g$. The differential $d_\Lambda$ is given such that
 $$d_\Lambda \begin{pmatrix}
 g & s^{-1}h\\
 0 & f
 \end{pmatrix}=\begin{pmatrix}
 \partial(g) & s^{-1}(-\partial(h)+\xi\circ f-g\circ \xi)\\
 0 & \partial(f)
 \end{pmatrix}.$$
 We have two surjective homomorphisms ${\rm pr}_1\colon \Lambda \rightarrow {\rm End}_B(Q)$ and ${\rm pr}_2\colon \Lambda \rightarrow {\rm End}_B(P)$ of dg algebras such that
 $${\rm pr}_1 \begin{pmatrix}
 g & s^{-1}h\\
 0 & f
 \end{pmatrix}=g \mbox{ and } {\rm pr}_2 \begin{pmatrix}
 g & s^{-1}h\\
 0 & f
 \end{pmatrix}=f.$$
 Using these two homomorpshims, both $P$ and $Q$ become dg $B$-$\Lambda^{\rm op}$-bimodules. Therefore, we view them as objects in $\mathbf{D}(B\otimes \Lambda)$. Here, we identify left dg $B\otimes \Lambda$-modules with dg $B$-$\Lambda^{\rm op}$-bimodules.

 The first statement of the following result is known to experts.

\begin{prop}\label{prop:dg}
Assume that  $\xi\colon P\rightarrow Q$ is a homotopy equivalence. Then the following statements hold.
\begin{enumerate}
\item Both surjections ${\rm pr}_i$ are quasi-isomorphisms.
\item There is an isomorphism $P\simeq Q$ in $\mathbf{D}(B\otimes \Lambda)$.
\end{enumerate}
\end{prop}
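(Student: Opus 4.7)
For part (1), the plan is to analyze the kernels of $\mathrm{pr}_1$ and $\mathrm{pr}_2$ as complexes. The kernel $K_1 = \ker(\mathrm{pr}_1)$ has underlying graded module $\Sigma^{-1}\mathrm{Hom}_B(P,Q) \oplus \mathrm{End}_B(P)$, and specializing the formula for $d_\Lambda$ to matrices with $g = 0$ shows that $\Sigma^{-1}\mathrm{Hom}_B(P,Q)$ is a sub-complex of $K_1$ with quotient $\mathrm{End}_B(P)$. Tracing through the differential, the connecting morphism on cohomology is, up to sign, induced by the chain map $\xi_* \colon \mathrm{End}_B(P) \to \mathrm{Hom}_B(P,Q)$, $f \mapsto \xi \circ f$. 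Since $\xi$ is a homotopy equivalence, so is $\xi_*$: a homotopy inverse $\eta$ of $\xi$ produces $\eta_* \colon \mathrm{Hom}_B(P,Q) \to \mathrm{End}_B(P)$, and the given homotopies $\eta \xi \simeq \mathrm{Id}_P$, $\xi \eta \simeq \mathrm{Id}_Q$ yield homotopies between $\eta_* \xi_*$, $\xi_* \eta_*$ and the respective identities at the Hom-complex level. In particular $\xi_*$ is a quasi-isomorphism, so $K_1$ is acyclic by the long exact cohomology sequence; then $\mathrm{pr}_1$ is a quasi-isomorphism via the long exact sequence applied to $0 \to K_1 \to \Lambda \to \mathrm{End}_B(Q) \to 0$. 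The argument for $\mathrm{pr}_2$ is the mirror image, using $\xi^* \colon \mathrm{End}_B(Q) \to \mathrm{Hom}_B(P,Q)$, $g \mapsto g \circ \xi$.

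For part (2), the idea is to realise $\mathrm{Cone}(\xi)$ as a dg $B$-$\Lambda^{\mathrm{op}}$-bimodule providing a bridge between $P$ and $Q$. Set $C = \mathrm{Cone}(\xi)$: a complex of $B$-modules with underlying graded module $Q \oplus \Sigma P$ and the standard cone differential. I endow $C$ with a left $\Lambda$-action by letting a matrix $\lambda = \begin{pmatrix} g & s^{-1}h \\ 0 & f \end{pmatrix}$ of degree $p$ send $\begin{pmatrix} y \\ \sigma x \end{pmatrix}$ to $\begin{pmatrix} g(y) + (-1)^p h(x) \\ \sigma f(x) \end{pmatrix}$. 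A direct calculation, using $\partial h = d_Q h + (-1)^p h \circ d_P$ where appropriate, verifies the Leibniz rule $d_C(\lambda v) = d_\Lambda(\lambda) v + (-1)^p \lambda d_C(v)$, so $C$ is a dg $\Lambda$-module; the action visibly commutes with the $B$-action, making $C$ an object of $\mathbf{D}(B \otimes \Lambda)$. The sub-bimodule $Q \hookrightarrow C$ inherits its $\Lambda$-action via $\mathrm{pr}_1$, and the quotient bimodule $C/Q \cong \Sigma P$ inherits its $\Lambda$-action via $\mathrm{pr}_2$; the resulting componentwise split short exact sequence $0 \to Q \to C \to \Sigma P \to 0$ of dg $B\otimes \Lambda$-modules yields an exact triangle $Q \to C \to \Sigma P \to \Sigma Q$ in $\mathbf{D}(B \otimes \Lambda)$. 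Since $\xi$ is a quasi-isomorphism, $C$ is acyclic and hence zero in $\mathbf{D}(B \otimes \Lambda)$, so the triangle collapses to an isomorphism $\Sigma P \simeq \Sigma Q$; desuspending gives $P \simeq Q$.

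The principal obstacle I anticipate is the sign bookkeeping in part (2): the factor $(-1)^p$ in the $\Lambda$-action is essential, and one must carefully match the conventions for the suspension $\Sigma$, for the shift symbol $s^{-1}$ in the matrix algebra $\Lambda$, and for the cone differential, in order for the Leibniz rule to hold and for the sub- and quotient-actions on $C$ to agree precisely with $\mathrm{pr}_1$ and $\mathrm{pr}_2$. Part (1) is then largely formal once $K_1$ is recognised as (a shift of) $\mathrm{Cone}(\xi_*)$, and the homotopy equivalence of $\xi_*$ (and dually $\xi^*$) follows from elementary manipulation of the given homotopies.
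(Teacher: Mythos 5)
Your proposal is correct and follows essentially the same route as the paper: part (1) by recognising the kernels of ${\rm pr}_1$, ${\rm pr}_2$ as (shifted) cones of the induced maps ${\rm End}_B(P)\rightarrow {\rm Hom}_B(P,Q)\leftarrow {\rm End}_B(Q)$, which are homotopy equivalences, and part (2) by equipping the mapping cone $C$ of $\xi$ with a dg $B$-$\Lambda^{\rm op}$-bimodule structure (restricting to the ${\rm pr}_1$- and ${\rm pr}_2$-actions on the sub and quotient), so that the short exact sequence $0\to Q\to C\to \Sigma(P)\to 0$ gives a triangle in $\mathbf{D}(B\otimes\Lambda)$ whose acyclic middle term forces $P\simeq Q$. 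The only deviation is in the sign conventions for the $\Lambda$-action on $C$ (the paper uses $(-1)^{|h|}$ and $(-1)^{|f|}$), which you rightly flag as the point requiring careful verification.
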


\begin{proof}
For (1), we observe that the homotopy equivalence  $\xi$ induces homotopy equivalences
$${\rm End}_B(Q)\longrightarrow {\rm Hom}_B(P, Q) \longleftarrow {\rm End}_B(P)$$
among complexes of $\mathbb{K}$-modules. Then we infer that the kernels of both ${\rm pr}_i$'s are acyclic, and prove (1).

For (2), we consider the mapping cone $C=Q\oplus \Sigma(P)$ of $\xi$, whose typical element of degree $p$ is written as a column vector $\begin{pmatrix} x\\ sy\end{pmatrix}$ with $x\in Q^p$ and $y\in P^{p+1}$. Here, $s$ is viewed as a symbol of degree $-1$. The differential is given such that $d_C\begin{pmatrix} x\\ sy\end{pmatrix}=\begin{pmatrix} d_Q(x)+\xi(y)\\ -sd_P(y)\end{pmatrix}$. We observe that $C$ is a left dg $\Lambda$-module with the action given by
$$ \begin{pmatrix}
 g & s^{-1}h\\
 0 & f
 \end{pmatrix} \begin{pmatrix} x\\ sy\end{pmatrix}= \begin{pmatrix} g(x)+(-1)^{|h|}h(y)\\ (-1)^{|f|}sf(y)\end{pmatrix}.$$
 Since the $\Lambda$-action is compatible with the $B$-module structure, $C$ becomes a dg $B$-$\Lambda^{\rm op}$-bimodule.

 We have the following short exact sequence of dg $B$-$\Lambda^{\rm op}$-bimodules.
\begin{align}\label{ses:dg}
0\longrightarrow Q\xrightarrow{\begin{pmatrix} 1 \\ 0 \end{pmatrix}} C \xrightarrow{\begin{pmatrix}0 & 1\end{pmatrix}} \Sigma(P)\longrightarrow 0
\end{align}
 This  sequence induces an exact triangle
 $$Q\xrightarrow{\begin{pmatrix} 1 \\ 0 \end{pmatrix}} C \xrightarrow{\begin{pmatrix}0 & 1\end{pmatrix}} \Sigma(P) \dashrightarrow \Sigma(Q)$$
 in $\mathbf{D}(B\otimes \Lambda)$. Since $\xi$ is a homotopy equivalence, $C$ is acyclic and isomorphic to zero in $\mathbf{D}(B\otimes \Lambda)$. Then the dash arrow above is an isomorphism, which yields the desired isomorphism.
\end{proof}

\begin{rem}
Although both $P$ and $Q$ are left dg $\Lambda$-modules, the chain map $\xi$ is not a morphism of dg $\Lambda$-modules in general.  As shown in the proof above, the isomorphism $P\simeq Q$ is obtained by applying $\Sigma^{-1}$ to the dash arrow above, which is given by the following roof.
$$\Sigma(P) \longleftarrow C\oplus \Sigma(Q) \xrightarrow{(0, 1)} \Sigma(Q)$$
Here, the unnamed arrow is induced by the projection $C\rightarrow \Sigma(P)$. We emphasize that the exact sequence (\ref{ses:dg}) does not split in the category of graded $\Lambda$-modules in general.
\end{rem}

\section{Canonical derived equivalences}\label{sec:can}

In this section, we introduce the canonical derived equivalence associated to a tilting complex and an algebra isomorphism. We give a criterion on when two canonical derived equivalences are equivalent; see Theorem~\ref{thm:can}. A general factorization theorem is proved in Theorem~\ref{thm:factor}. We prove in Theorem~\ref{thm:hereditary+can} that any derived equivalence starting from a left hereditary algebra is always canonical.

Let $A$ be a $\mathbb{K}$-algebra, which is not necessarily flat. Denote by $A\mbox{-Mod}$ the abelian category of left $A$-modules, and by $\mathbf{D}(A\mbox{-Mod})$ its derived category. For each element $a\in A$, we denote by $r_a\colon A\rightarrow A$ the morphism in $A\mbox{-Mod}$ sending $x$ to $xa$. Since we identify a module with the corresponding stalk complex concentrated in degree zero, each $r_a$ is viewed as a morphism in $\mathbf{D}(A\mbox{-Mod})$.

The following result is analogous to \cite[Proposition~5.1]{CC}.

\begin{lem}\label{lem:pseudo-D}
Let $F\colon \mathbf{D}(A\mbox{-}{\rm Mod})\rightarrow \mathbf{D}(A\mbox{-}{\rm Mod})$ be a triangle endofunctor. Then $F$ is isomorphic to a pseudo-identity on $\mathbf{D}(A\mbox{-}{\rm Mod})$  if and only if $F$ is an auto-equivalence such that there is an isomorphism $\theta\colon F(A)\rightarrow A$ in $\mathbf{D}(A\mbox{-}{\rm Mod})$ satisfying $r_a\circ \theta=\theta\circ F(r_a)$ for each $a\in A$.
\end{lem}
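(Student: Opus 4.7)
The plan is as follows. The ``only if'' direction is immediate: if $\eta\colon F\Rightarrow G$ is a natural isomorphism with $G$ a pseudo-identity, then $G(A)=A$ and $G(r_a)=r_a$, so $\theta=\eta_A\colon F(A)\to A$ satisfies $r_a\circ \theta=\theta\circ F(r_a)$ by naturality of $\eta$ applied to the morphism $r_a\colon A\to A$; since $G$ is an autoequivalence by definition, so is $F$.

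For the converse, I first argue that $F$ restricts to an autoequivalence of the core $A\mbox{-Mod}$. Let $G$ be a quasi-inverse of $F$; applying $G$ to $\theta$ yields $G(A)\simeq A$. For any $A$-module $M$ and $n\neq 0$, the equivalence $F$ gives
$${\rm Hom}_{\mathbf{D}(A\mbox{-}{\rm Mod})}(A, \Sigma^n F(M))\simeq {\rm Hom}_{\mathbf{D}(A\mbox{-}{\rm Mod})}(G(A),\Sigma^n M)\simeq {\rm Hom}_{\mathbf{D}(A\mbox{-}{\rm Mod})}(A,\Sigma^n M)=0,$$
so $F(M)$ lies in $A\mbox{-Mod}$; symmetrically for $G$, and hence $F|_{A\mbox{-Mod}}$ is an exact autoequivalence of $A\mbox{-Mod}$. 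The rule $a\mapsto F(r_a)$ endows $F(A)$ with a right $A$-action compatible with its left one, turning it into an $A$-$A$-bimodule, and the hypothesis $r_a\circ \theta=\theta\circ F(r_a)$ says exactly that $\theta$ is an isomorphism of $A$-$A$-bimodules $F(A)\simeq A$. By the Eilenberg-Watts / Morita recognition of an exact autoequivalence that commutes with colimits, the functor $F|_{A\mbox{-Mod}}$ is naturally isomorphic to $F(A)\otimes_A -$; composing with $\theta\otimes_A -$ and the canonical identification $A\otimes_A - \simeq {\rm Id}_{A\mbox{-Mod}}$ yields a natural isomorphism $\eta\colon F|_{A\mbox{-Mod}}\Rightarrow {\rm Id}_{A\mbox{-Mod}}$.

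To finish, I invoke Theorem~\ref{thm:trivial-action} applied to the canonical pre-weight structure $(\mathcal{D}_{\geq 0}, \mathcal{D}_{\leq 0})$ on $\mathbf{D}(A\mbox{-}{\rm Mod})$. The derived category is bi-complete, $F$ is bi-continuous as any triangle autoequivalence is, the pre-weight structure is two-sided convergent by Lemma~\ref{lem:can-pws-D}, and the Hom-vanishing ${\rm Hom}_{\mathbf{D}(A\mbox{-}{\rm Mod})}(\mathcal{A},\Sigma^{-n}(\mathcal{A}))=0$ for $n\geq 1$ is standard. The natural isomorphism $\eta$ constructed above supplies the remaining hypothesis on the core, so Theorem~\ref{thm:trivial-action} yields $F(X)\simeq X$ for every object $X$. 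Combined with $F|_{A\mbox{-Mod}}\simeq {\rm Id}_{A\mbox{-Mod}}$ and the autoequivalence property, Lemma~\ref{lem:pseudo} then concludes that $F$ is isomorphic to a pseudo-identity. The main subtle point is the Morita-theoretic upgrading of the single-object data $\theta$ together with its scalar compatibility to a natural isomorphism of endofunctors of $A\mbox{-Mod}$; the remaining ingredients are invocations of earlier results.
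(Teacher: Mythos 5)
Your proof is correct and follows essentially the same route as the paper's: show via the cohomology computation $H^nF(M)\simeq{\rm Hom}_{\mathbf{D}(A\mbox{-}{\rm Mod})}(A,\Sigma^nF(M))=0$ that $F$ preserves modules, use Eilenberg--Watts together with the bimodule interpretation of the hypothesis on $\theta$ to get $F|_{A\mbox{-}{\rm Mod}}\simeq{\rm Id}$, and then conclude by the trivial-action theorem and Lemma~\ref{lem:pseudo}. The only cosmetic difference is that you inline the content of Proposition~\ref{prop:pseudo-D} (its autoequivalence case) instead of citing it, and you phrase the first step via a quasi-inverse rather than via fully-faithfulness.
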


\begin{proof}
The ``only if" part is direct. For the ``if" part, we take any $A$-module $M$. For each $n\neq 0$, we have  the following isomorphisms of $\mathbb{K}$-modules.
\begin{align*}
H^n F(M) &\simeq {\rm Hom}_{\mathbf{D}(A\mbox{-}{\rm Mod})}(A, \Sigma^n F(M))\\
&\simeq {\rm Hom}_{\mathbf{D}(A\mbox{-}{\rm Mod})}(F(A), \Sigma^n F(M))\\
&\simeq {\rm Hom}_{\mathbf{D}(A\mbox{-}{\rm Mod})}(A, \Sigma^n(M))=0
\end{align*}
Here, the second isomorphism uses the isomorphism $\theta$ and the last one uses the fully-faithfulness of $F$. This implies that $F(M)$ is isomorphic to a stalk complex concentrated in degree zero. Therefore, we have
$F(A\mbox{-}{\rm Mod})\subseteq A\mbox{-}{\rm Mod}$.

We observe that $F|_{A\mbox{-}{\rm Mod}}\colon A\mbox{-}{\rm Mod}\rightarrow A\mbox{-}{\rm Mod}$ is exact and preserves arbitrary coproducts. It follows from the classical Eilenberg-Watt's theorem that $F|_{A\mbox{-}{\rm Mod}}$ is isomorphic to $F(A)\otimes_A -$. The assumption on $\theta$ implies that the $A$-$A$-bimodule $F(A)$ is isomorphic to $A$. Consequently, $F|_{A\mbox{-}{\rm Mod}}$ is isomorphic to the identity functor. Then the required statement follows directly from Proposition~\ref{prop:pseudo-D}.
\end{proof}

Denote by $B\mbox{-proj}$ the category of finitely generated projective $B$-modules and by $\mathbf{K}^b(B\mbox{-proj})$ its bounded homotopy category. The canonical functor $\mathbf{K}^b(B\mbox{-proj})\rightarrow \mathbf{D}(B\mbox{-Mod})$ is fully faithful, whose essential image is $\mathbf{D}(B\mbox{-Mod})^c$, the subcategory of $\mathbf{D}(B\mbox{-Mod})$ formed by compact objects.

  Recall from \cite{Ric89} that a \emph{tilting complex} $P$ over $B$ is an object $P\in \mathbf{K}^b(B\mbox{-proj})$ such that ${\rm Hom}_{\mathbf{K}^b(B\mbox{-}{\rm proj})}(P, \Sigma^n(P))=0$ for any $n\neq 0$, and that ${\rm Loc}\langle P\rangle=\mathbf{D}(B\mbox{-Mod})$. The latter condition might be replaced by the following one: the smallest triangulated subcategory of $\mathbf{K}^b(B\mbox{-}{\rm proj})$ containing $P$ and closed under direct summands is $\mathbf{K}^b(B\mbox{-}{\rm proj})$ itself.

Recall the dg endomorphism algebra ${\rm End}_B(P)$ of a complex $P$ from Section~\ref{sec:dg}. We will need the truncated dg subalgebra $\tau_{\leq 0}\; {\rm End}_B(P)$ of ${\rm End}_B(P)$, whose underlying graded space is given by $(\bigoplus_{n\geq 1}{\rm End}_B(P)^{-n}) \oplus {\rm Ker}\; \partial^0$. Moreover, we have the canonical map
\begin{align}\label{equ:pi-P}
\pi_P\colon \tau_{\leq 0}\; {\rm End}_B(P)\longrightarrow H^0({\rm End}_B(P))={\rm End}_{\mathbf{K}^b(B\mbox{-}{\rm Mod})}(P).
\end{align}
Denote by $\Gamma=(\tau_{\leq 0}\; {\rm End}_B(P))^{\rm op}$, the opposite dg algebra of $\tau_{\leq 0}\; {\rm End}_B(P)$. We observe that $P$ is naturally a dg $B$-$\Gamma$-bimodule.  We have the corresponding derived tensor functor
\begin{align}\label{equ:derived-tensor}
P\otimes_\Gamma^\mathbb{L}-\colon \mathbf{D}(\Gamma)\longrightarrow \mathbf{D}(B\mbox{-Mod}).
\end{align}

Let $P$ be  a tilting complex over $B$. Then the functor (\ref{equ:derived-tensor}) is a triangle equivalence; see \cite[Section~8]{Kel94}. We observe that $H^n({\rm End}_B(P))=0$ for $n\neq 0$. Therefore, $\pi_P$ is a quasi-isomorphism between dg algebras.

 By a \emph{tilting pair} $(P, \phi)$ from $A$ to $B$, we mean a tilting complex $P$ over $B$ and an algebra isomorphism $\phi\colon A\rightarrow {\rm End}_{\mathbf{K}^b(B\mbox{-}{\rm proj})}(P)^{\rm op}$.

The construction in the following definition is implicit in \cite{Kel93}.

 \begin{defn}
Let $(P, \phi)$ be a tilting pair from $A$ to $B$. The \emph{canonical derived equivalence} associated to $(P, \phi)$ is the following composite triangle equivalence.
$$\Psi_{(P, \phi)}\colon \mathbf{D}(A\mbox{-Mod}) \xrightarrow{(\phi^{-1}\circ \pi_P)^*} \mathbf{D}(\Gamma)\xrightarrow{P\otimes^\mathbb{L}_\Gamma-} \mathbf{D}(B\mbox{-Mod}).$$
Here, $\Gamma=(\tau_{\leq 0}\; {\rm End}_B(P))^{\rm op}$ and the functor $(\phi^{-1}\circ \pi_P)^*$ is induced by the dg algebra quasi-isomorphism $\phi^{-1}\circ \pi_P\colon \Gamma\rightarrow A$.

In general, a derived equivalence $F\colon \mathbf{D}(A\mbox{-Mod})\rightarrow \mathbf{D}(B\mbox{-Mod})$ is called \emph{canonical} if it is isomorphic to a canonical derived equivalence as a triangle functor.
 \end{defn}

 \begin{rem}\label{rem:canonical}
We observe that $\Psi_{(P, \phi)}(A)=P\otimes_\Gamma^\mathbb{L}A\simeq P\otimes_\Gamma \Gamma=P$, where $A$ is viewed as a left dg $\Gamma$-module via the quasi-isomorphism $\phi^{-1}\circ \pi_P$. In other words, we identify $\Psi_{(P, \phi)}(A)$ with $P$. Under this identification, we observe that $\Psi_{(P, \phi)}(r_a)=\phi(a)$ for any element $a\in A$.
 \end{rem}

 The following technical proof relies on the results in Section~\ref{sec:dg}.

 \begin{thm}\label{thm:can}
 Let $(P, \phi)$ and $(Q, \psi)$ be two tilting pairs from $A$ to $B$. Then $\Psi_{(P, \phi)}\simeq \Psi_{(Q, \psi)}$ if and only if there is an isomorphism $\xi\colon P\rightarrow Q$ in $\mathbf{K}^b(B\mbox{-}{\rm proj})$ satisfying $\psi(a)=\xi\circ \phi(a)\circ \xi^{-1}$ for any $a\in A$.
 \end{thm}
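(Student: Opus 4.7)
The plan is to handle the ``only if'' direction by a one-line naturality argument and then invest the work in the ``if'' direction via the matrix dg algebra of Section~\ref{sec:dg}.

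For the ``only if'' direction, suppose $\alpha\colon \Psi_{(P,\phi)}\to \Psi_{(Q,\psi)}$ is a natural isomorphism of triangle functors. I would use Remark~\ref{rem:canonical} to identify $\Psi_{(P,\phi)}(A)$ with $P$ and $\Psi_{(Q,\psi)}(A)$ with $Q$, so that $\alpha_A$ becomes an isomorphism $\xi\colon P\to Q$ in $\mathbf{D}(B\mbox{-Mod})$, which lies in $\mathbf{K}^b(B\mbox{-proj})$ since the canonical functor is fully faithful. Naturality of $\alpha$ against $r_a\colon A\to A$, combined with $\Psi_{(P,\phi)}(r_a)=\phi(a)$ and $\Psi_{(Q,\psi)}(r_a)=\psi(a)$, immediately delivers $\psi(a)\circ \xi=\xi\circ \phi(a)$.

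For the ``if'' direction I would lift the given isomorphism to a chain map $\xi\colon P\to Q$, automatically a homotopy equivalence, and form the matrix dg algebra $\Lambda$ of Section~\ref{sec:dg} with its surjective quasi-isomorphisms ${\rm pr}_1\colon \Lambda\to {\rm End}_B(Q)$ and ${\rm pr}_2\colon \Lambda\to {\rm End}_B(P)$ from Proposition~\ref{prop:dg}(1). Since $P$ is tilting, $H^n(\Lambda)\simeq H^n({\rm End}_B(P))=0$ for $n\neq 0$, so the inclusion $\tau_{\leq 0}\Lambda\hookrightarrow \Lambda$ is also a quasi-isomorphism. The truncated projections, after passing to opposites, become dg algebra quasi-isomorphisms $\rho_P\colon (\tau_{\leq 0}\Lambda)^{\rm op}\to \Gamma_P$ and $\rho_Q\colon (\tau_{\leq 0}\Lambda)^{\rm op}\to \Gamma_Q$.

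The crucial step, which I expect to be the main technical point, is to verify the equality of dg algebra maps
\[ \phi^{-1}\circ \pi_P\circ \rho_P \;=\; \psi^{-1}\circ \pi_Q\circ \rho_Q \colon (\tau_{\leq 0}\Lambda)^{\rm op}\longrightarrow A. \]
Since $A$ is concentrated in degree zero and $(\tau_{\leq 0}\Lambda)^{\rm op}$ has its degree-zero part consisting of cocycles, each side factors through $H^0((\tau_{\leq 0}\Lambda)^{\rm op})$. A direct inspection of the differential given in Section~\ref{sec:dg} shows that a degree-zero cocycle $\begin{pmatrix} g & s^{-1}h \\ 0 & f\end{pmatrix}$ in $\tau_{\leq 0}\Lambda$ satisfies $\xi\circ f - g\circ \xi = \partial(h)$, so $[g] = \xi\circ [f]\circ \xi^{-1}$ in the homotopy category; the hypothesis $\psi(a)=\xi\circ \phi(a)\circ \xi^{-1}$ then forces the two composites to agree on $H^0$, hence as dg algebra maps.

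The rest is assembly. Writing $P$ and $Q$ as dg $B$-$(\tau_{\leq 0}\Lambda)$-bimodules via ${\rm pr}_2$ and ${\rm pr}_1$, standard manipulation of derived extension/restriction of scalars along the quasi-isomorphisms $\rho_P$ and $\rho_Q$ gives
\[ \Psi_{(P,\phi)} \simeq \bigl(P\otimes^{\mathbb{L}}_{(\tau_{\leq 0}\Lambda)^{\rm op}}-\bigr)\circ (\phi^{-1}\circ \pi_P\circ \rho_P)^{*}, \]
\[ \Psi_{(Q,\psi)} \simeq \bigl(Q\otimes^{\mathbb{L}}_{(\tau_{\leq 0}\Lambda)^{\rm op}}-\bigr)\circ (\psi^{-1}\circ \pi_Q\circ \rho_Q)^{*}. \]
The identity of dg algebra maps established above makes the two restriction functors coincide, and Proposition~\ref{prop:dg}(2) supplies an isomorphism $P\simeq Q$ in $\mathbf{D}(B\otimes \Lambda)$ which descends to $\mathbf{D}(B\otimes \tau_{\leq 0}\Lambda)$, yielding a natural isomorphism $P\otimes^{\mathbb{L}}_{(\tau_{\leq 0}\Lambda)^{\rm op}}- \simeq Q\otimes^{\mathbb{L}}_{(\tau_{\leq 0}\Lambda)^{\rm op}}-$. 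Combining the two gives $\Psi_{(P,\phi)}\simeq \Psi_{(Q,\psi)}$, completing the proof.
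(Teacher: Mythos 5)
Your proposal is correct and follows essentially the same route as the paper: the ``only if'' direction via Remark~\ref{rem:canonical} and naturality at $A$ against the morphisms $r_a$, and the ``if'' direction via the matrix dg algebra $\Lambda$ of Section~\ref{sec:dg}, the observation that a degree-zero cocycle $\begin{pmatrix} g & s^{-1}h\\ 0 & f\end{pmatrix}$ gives $\xi\circ f=g\circ\xi$ in $\mathbf{K}^b(B\mbox{-proj})$ so that $\phi^{-1}\circ\pi_P\circ{\rm pr}_2=\psi^{-1}\circ\pi_Q\circ{\rm pr}_1$, rewriting both $\Psi_{(P,\phi)}$ and $\Psi_{(Q,\psi)}$ as derived tensor functors over $(\tau_{\leq 0}\Lambda)^{\rm op}$, and invoking Proposition~\ref{prop:dg}(2) for the bimodule isomorphism $P\simeq Q$. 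The only cosmetic difference is that you verify the equality of the two composites into $A$ directly on $H^0$, where the paper packages the same computation as the commutativity of the diagram (\ref{com-diag:Gamma}).
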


 \begin{proof}
 In view of Remark~\ref{rem:canonical}, $\Psi_{(P, \phi)}$ recovers the tilting pair $(P, \phi)$. Then ``only if" part follows immediately.

 For the ``if" part, we set $\Delta=(\tau_{\leq 0} \; {\rm End}_B(Q))^{\rm op}$. Associated to the tilting complex $Q$, we have the quasi-isomorphism $\pi_Q\colon \Delta \rightarrow {\rm End}_{\mathbf{K}^b(B\mbox{-}{\rm proj})}(Q)^{\rm op}$. We consider the matrix dg algebra $\Lambda$ associated to $\xi$ in Section~\ref{sec:dg}. Since $\xi$ is a homotopy equivalence, the two surjective homomorphisms ${\rm pr}_1\colon \Lambda \rightarrow {\rm End}_B(Q)$ and ${\rm pr}_2\colon \Lambda \rightarrow {\rm End}_B(P)$ are quasi-isomorphisms, which induce quasi-isomorphisms between the truncated dg subalgebras.

 We observe that the following diagram commutes.
\begin{align}\label{com-diag:Gamma}
\xymatrix{
 \Gamma \ar[d]_-{\pi_P} & (\tau_{\leq 0}\; \Lambda)^{\rm op}\ar[l]_-{{\rm pr}_2} \ar[r]^-{{\rm pr}_1} & \Delta\ar[d]^-{\pi_Q}\\
 {\rm End}_{\mathbf{K}^b(B\mbox{-}{\rm proj})}(P)^{\rm op} \ar[rr] && {\rm End}_{\mathbf{K}^b(B\mbox{-}{\rm proj})}(Q)^{\rm op}
 }\end{align}
 Here, the unnamed isomorphism sends $u$ to $\xi\circ u\circ \xi^{-1}$. The commutativity follows immediately from the following fact: any element in $(\tau_{\leq 0}\; \Lambda)^{\rm op}$ of degree zero is of the form  $\begin{pmatrix}
 g & s^{-1}h\\
 0 & f
 \end{pmatrix}$
with $g\in  {\rm End}_B(Q)^0$, $h\in {\rm Hom}_B(P, Q)^{-1}$ and $f\in {\rm End}_B(P)^0$;  since $d_\Lambda$ vanishes on this element, we infer that $g$ and $f$ are cochain maps such that $\xi\circ f= g\circ \xi$ in $\mathbf{K}^b(B\mbox{-}{\rm proj})$.

The commutative diagram (\ref{com-diag:Gamma}) and the assumption imply that
$$\phi^{-1}\circ \pi_P\circ {\rm pr}_2=\psi^{-1}\circ \pi_Q\circ {\rm pr}_1.$$
Denote the common value by $\eta\colon (\tau_{\leq 0}\; \Lambda)^{\rm op}\rightarrow A$.

We claim that $\Psi_{(P, \phi)}$ is isomorphic to the following composite functor
$$\mathbf{D}(A\mbox{-Mod}) \stackrel{\eta^*}\longrightarrow \mathbf{D}((\tau_{\leq 0}\; \Lambda)^{\rm op})\xrightarrow{P\otimes^\mathbb{L}_{(\tau_{\leq 0}\; \Lambda)^{\rm op}}-} \mathbf{D}(B\mbox{-Mod}).$$
Recall that $\eta^*=({\rm pr}_2)^*\circ (\phi^{-1}\circ \pi_P)^*$ and that $\Gamma\otimes^\mathbb{L}_{(\tau_{\leq 0}\; \Lambda)^{\rm op}}-$ is a quasi-inverse of ${\rm pr}_2)^*$. Then the claim follows immediately from the following canonical isomorphism
$$(P\otimes^\mathbb{L}_\Gamma-) \circ (\Gamma\otimes^\mathbb{L}_{(\tau_{\leq 0}\; \Lambda)^{\rm op}}-) \simeq P\otimes^\mathbb{L}_{(\tau_{\leq 0}\; \Lambda)^{\rm op}}-.$$

Similarly, we have that $\Psi_{(Q, \psi)}$ is isomorphic to the following composite functor
$$\mathbf{D}(A\mbox{-Mod}) \stackrel{\eta^*}\longrightarrow \mathbf{D}((\tau_{\leq 0}\; \Lambda)^{\rm op})\xrightarrow{Q\otimes^\mathbb{L}_{(\tau_{\leq 0}\; \Lambda)^{\rm op}}-} \mathbf{D}(B\mbox{-Mod}).$$
By Proposition~\ref{prop:dg}(2), the two dg $B$-$(\tau_{\leq 0}\; \Lambda)^{\rm op}$-bimodules  $P$ and $Q$ are isomorphic in $\mathbf{D}(B\otimes (\tau_{\leq 0}\; \Lambda))$. Therefore, the derived tensor functors
$P\otimes^\mathbb{L}_{(\tau_{\leq 0}\; \Lambda)^{\rm op}}-$ and $Q\otimes^\mathbb{L}_{(\tau_{\leq 0}\; \Lambda)^{\rm op}}-$ are isomorphic. From this, we conclude that $\Psi_{(P, \phi)}$ and $\Psi_{(Q, \psi)}$ are isomorphic.
 \end{proof}

 The following general factorization theorem might be traced back to \cite[Proposition~7.1]{Ric89}  and \cite[Corollary~3.5]{Ric91}; compare \cite[Proposition~5.8]{CY} and \cite[Theorem~5.4]{CC}.

 \begin{thm}\label{thm:factor}
 Let $F\colon \mathbf{D}(A\mbox{-}{\rm Mod})\rightarrow \mathbf{D}(B\mbox{-}{\rm Mod})$ be a derived equivalence. Then there is  an isomorphism $F\simeq F_2 F_1$ with $F_1$ a pseudo-identity on $\mathbf{D}(A\mbox{-}{\rm Mod})$ and $F_2$ a canonical derived equivalence. Such $F_1$ and $F_2$ are unique up to isomorphism.
 \end{thm}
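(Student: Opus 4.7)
The plan is to build $F_2$ directly from a tilting pair canonically associated with $F$, then to define $F_1 := F_2^{-1} F$ and verify that it is isomorphic to a pseudo-identity via Lemma~\ref{lem:pseudo-D}. For uniqueness, I would compare any two such factorizations by transferring the pseudo-identity part onto the canonical parts and then applying the criterion in Theorem~\ref{thm:can}.

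For existence, I would first check that $P := F(A)$ is a tilting complex over $B$. Since $F$ is a triangle equivalence, it sends compact objects to compact objects, so $P \in \mathbf{K}^b(B\mbox{-}{\rm proj})$ (using that $B\mbox{-}{\rm proj}$ is idempotent complete). The vanishing ${\rm Hom}_{\mathbf{D}(B\mbox{-}{\rm Mod})}(P, \Sigma^n P) \simeq {\rm Hom}_{\mathbf{D}(A\mbox{-}{\rm Mod})}(A, \Sigma^n A) = 0$ for $n\neq 0$ is transported across $F$, and ${\rm Loc}\langle P\rangle = F({\rm Loc}\langle A\rangle) = \mathbf{D}(B\mbox{-}{\rm Mod})$ since $F$ preserves coproducts. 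Next, $F$ induces an algebra isomorphism $\phi\colon A \simeq {\rm End}_{\mathbf{D}(A\mbox{-}{\rm Mod})}(A)^{\rm op} \xrightarrow{\sim} {\rm End}_{\mathbf{D}(B\mbox{-}{\rm Mod})}(P)^{\rm op}$, explicitly $\phi(a) = F(r_a)$. Set $F_2 := \Psi_{(P, \phi)}$ and $F_1 := F_2^{-1} F$.

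To see that $F_1$ is isomorphic to a pseudo-identity, I would use Lemma~\ref{lem:pseudo-D}: by Remark~\ref{rem:canonical} there is a canonical isomorphism $\kappa\colon F_2(A) \xrightarrow{\sim} P$ under which $F_2(r_a)$ corresponds to $\phi(a) = F(r_a)$. Applying $F_2^{-1}$ to $\kappa^{-1}$ and composing with the unit of the equivalence $F_2$ produces an isomorphism $\theta\colon F_1(A) = F_2^{-1}F(A) \to A$ with $r_a\circ \theta = \theta \circ F_1(r_a)$ for every $a \in A$, so Lemma~\ref{lem:pseudo-D} applies.

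For uniqueness, suppose $F \simeq F_2 F_1 \simeq F_2' F_1'$. I would set $G := F_2'^{-1} F_2 \simeq F_1' F_1^{-1}$. Since pseudo-identities are closed under composition and inversion up to isomorphism (immediate from Lemma~\ref{lem:pseudo}), $G$ is isomorphic to a pseudo-identity on $\mathbf{D}(A\mbox{-}{\rm Mod})$. Write $F_2 \simeq \Psi_{(P, \phi)}$ and $F_2' \simeq \Psi_{(P', \phi')}$. Fix a natural isomorphism $\alpha\colon F_2 \xrightarrow{\sim} F_2'G$ and an isomorphism $\theta_G\colon G(A) \to A$ realizing the pseudo-identity property of $G$. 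Evaluating at $A$ and composing with $F_2'(\theta_G)$, I would produce an isomorphism $\xi\colon P \to P'$ in $\mathbf{K}^b(B\mbox{-}{\rm proj})$; the naturality of $\alpha$ at each $r_a$, together with $\theta_G \circ G(r_a) = r_a \circ \theta_G$, then yields the intertwining identity $\phi'(a)\circ \xi = \xi\circ \phi(a)$. Theorem~\ref{thm:can} gives $F_2 \simeq F_2'$, whence $F_1 \simeq F_2^{-1} F \simeq F_2'^{-1} F \simeq F_1'$.

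The main obstacle I expect is the bookkeeping in the uniqueness step: one must carefully compose the coherence data (the isomorphism $F_2(A)\simeq P$ from Remark~\ref{rem:canonical}, the natural isomorphism $\alpha$, and the pseudo-identity datum $\theta_G$) so that the resulting $\xi$ genuinely conjugates $\phi$ into $\phi'$ as a morphism in $\mathbf{K}^b(B\mbox{-}{\rm proj})$. The existence half is essentially a translation between the categorical data of $F$ and the tilting-pair data $(P, \phi)$.
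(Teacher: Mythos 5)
Your proposal is correct and follows essentially the same route as the paper: produce the tilting pair from $F(A)$ and $\phi(a)=F(r_a)$, recognize the pseudo-identity factor via Lemma~\ref{lem:pseudo-D} together with Remark~\ref{rem:canonical}, and obtain uniqueness by evaluating the comparison isomorphism at $A$ and invoking Theorem~\ref{thm:can}. The only (harmless) difference is bookkeeping: the paper replaces $F(A)$ by an honest complex $P\in\mathbf{K}^b(B\mbox{-}{\rm proj})$ with a chosen isomorphism $\zeta\colon P\rightarrow F(A)$, setting $\phi(a)=\zeta^{-1}\circ F(r_a)\circ \zeta$, and it runs the uniqueness argument directly on a natural isomorphism $\Psi_{(P,\phi)}F_1\simeq \Psi_{(Q,\psi)}F_1'$ (using that pseudo-identities are literally the identity on the core, so $F_1(A)=A$ and $F_1(r_a)=r_a$) rather than passing through the auxiliary autoequivalence $G=F_2'^{-1}F_2$.
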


 \begin{proof}
 It is well known that $F(A)$ is isomorphic to a tilting complex in $\mathbf{D}(B\mbox{-Mod})$. We take a tilting complex $P$ in $\mathbf{K}^b(B\mbox{-proj})$ and fix an isomorphism $\zeta\colon P\rightarrow F(A)$. Then we have an algebra isomorphism $\phi\colon A\rightarrow {\rm End}_{\mathbf{K}^b(B\mbox{-}{\rm proj})}(P)^{\rm op}$ given by $\phi(a)=\zeta^{-1}\circ F(r_a)\circ \zeta$ for any $a\in A$.

 Consider the canonical derived equivalence $\Psi_{(P, \phi)}$ associated to the tilting pair $(P, \phi)$. Set $F_1=F^{-1}\Psi_{(P, \phi)}$ with $F^{-1}$ a quasi-inverse of $F$. We have an isomorphism
 $$\theta\colon F_1(A)=F^{-1}(P)\xrightarrow{F^{-1}(\zeta)}  F^{-1}F(A)\simeq A$$
 in $\mathbf{D}(A\mbox{-Mod})$.  Using  Remark~\ref{rem:canonical}, it is routine to verify that $\theta$ satisfies the condition in Lemma~\ref{lem:pseudo-D}. It follows that $F_1$ is isomorphic to a pseudo-identity, and so is $F_1^{-1}$. This proves the existence of such a factorization $F\simeq \Psi_{(P, \phi)}F_1^{-1}$.

 For the uniqueness, we take two tilting pairs $(P, \phi)$ and $(Q, \psi)$ from $A$ to $B$. We assume that there is a natural  isomorphism $\alpha\colon \Psi_{(P, \phi)}F_1\simeq \Psi_{(Q, \psi)}F'_1$, where $F_1$ and $F'_1$ are pseudo-identities. Applying $\alpha$ on $A$, we obtain an isomorphism $\xi\colon P\rightarrow Q$. Since $\alpha$ is natural, we infer that $\xi$ satisfies $\psi(a)=\xi\circ \phi(a)\circ \xi^{-1}$ for any $a\in A$. Theorem~\ref{thm:can} implies that $\Psi_{(P, \phi)}\simeq \Psi_{(Q, \psi)}$, which further implies that $F_1\simeq F'_1$.
 \end{proof}

 The following consequence might be proved directly by a similar argument in the preceding paragraph.

 \begin{cor}\label{cor:can+ps}
 Let $F\colon \mathbf{D}(A\mbox{-}{\rm Mod})\rightarrow \mathbf{D}(A\mbox{-}{\rm Mod})$ be a derived auto-equivalence. If $F$ is canonical and is isomorphic to a pseudo-identity, then $F$ is isomorphic to the identity functor.
 \end{cor}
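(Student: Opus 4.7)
The plan is to apply Theorem~\ref{thm:can} after reducing both hypotheses on $F$ to data about a single tilting pair. First, I would observe that the identity functor $\mathrm{Id}_{\mathbf{D}(A\mbox{-}{\rm Mod})}$ is itself the canonical derived equivalence associated to the trivial tilting pair $(A,\mathrm{id}_A)$: with $P=A$ and $\phi=\mathrm{id}_A$, the dg endomorphism algebra ${\rm End}_A(A)$ is concentrated in degree zero, so $\Gamma=A$ and $\Psi_{(A,\mathrm{id}_A)}=A\otimes_A^{\mathbb{L}}-\simeq\mathrm{Id}$. The task therefore reduces to exhibiting, in the sense of Theorem~\ref{thm:can}, an equivalence between the tilting pair representing $F$ and $(A,\mathrm{id}_A)$.

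Since $F$ is canonical, fix a tilting pair $(P,\phi)$ from $A$ to $A$ with $F\simeq \Psi_{(P,\phi)}$. By Remark~\ref{rem:canonical}, $\Psi_{(P,\phi)}(A)$ identifies with $P$ and $\Psi_{(P,\phi)}(r_a)$ with $\phi(a)$; transporting along $F\simeq \Psi_{(P,\phi)}$, we may treat $F(A)=P$ and $F(r_a)=\phi(a)$. Since $F$ is also isomorphic to a pseudo-identity, Lemma~\ref{lem:pseudo-D} supplies an isomorphism $\theta\colon F(A)\to A$ in $\mathbf{D}(A\mbox{-}{\rm Mod})$ with $r_a\circ\theta=\theta\circ F(r_a)$ for each $a\in A$. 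Reading this through the identification $F(A)=P$, we obtain an isomorphism $\xi\colon P\to A$ satisfying $r_a\circ\xi=\xi\circ\phi(a)$, equivalently $\mathrm{id}_A(a)=r_a=\xi\circ\phi(a)\circ\xi^{-1}$. Since $P$ and $A$ both lie in $\mathbf{K}^b(A\mbox{-}{\rm proj})$ and the canonical functor into $\mathbf{D}(A\mbox{-}{\rm Mod})$ is fully faithful, $\xi$ lifts uniquely to an isomorphism in $\mathbf{K}^b(A\mbox{-}{\rm proj})$ satisfying the same conjugation identity.

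At this point Theorem~\ref{thm:can}, applied to the pairs $(P,\phi)$ and $(A,\mathrm{id}_A)$, yields $\Psi_{(P,\phi)}\simeq \Psi_{(A,\mathrm{id}_A)}\simeq \mathrm{Id}$, whence $F\simeq \mathrm{Id}$. An essentially equivalent route uses only Theorem~\ref{thm:factor}: once $\mathrm{Id}$ is recognized as canonical, the two factorizations $F\simeq F\circ\mathrm{Id}$ (canonical part $F$, pseudo-identity part $\mathrm{Id}$) and $F\simeq \mathrm{Id}\circ F$ (canonical part $\mathrm{Id}$, pseudo-identity part $F$) together with the uniqueness clause force $F\simeq \mathrm{Id}$. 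I do not anticipate a real obstacle; the only subtle point is the translation between the pseudo-identity hypothesis on $F$ and the conjugation identity required by Theorem~\ref{thm:can}, and this translation is already packaged by Lemma~\ref{lem:pseudo-D} combined with Remark~\ref{rem:canonical}.
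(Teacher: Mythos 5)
Your proposal is correct and matches the paper: the second route you sketch (the two factorizations $F\simeq \mathrm{Id}\circ F\simeq F\circ \mathrm{Id}$ combined with the uniqueness clause of Theorem~\ref{thm:factor}) is precisely the paper's proof, and your primary route through Lemma~\ref{lem:pseudo-D}, Remark~\ref{rem:canonical} and Theorem~\ref{thm:can} simply unwinds that uniqueness argument in this special case. The only cosmetic point is that the trivial tilting pair should be written as $(A,\, a\mapsto r_a)$ with target ${\rm End}_{\mathbf{K}^b(A\mbox{-}{\rm proj})}(A)^{\rm op}$ rather than literally $(A,\mathrm{id}_A)$, which you in effect acknowledge.
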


 \begin{proof}
 We have two factorizations $F={\rm Id}_{\mathbf{D}(A\mbox{-}{\rm Mod})} F=F{\rm Id}_{\mathbf{D}(A\mbox{-}{\rm Mod})}$ of $F$. The uniqueness in Theorem~\ref{thm:factor} implies the required isomorphism.
 \end{proof}

  \begin{cor}\label{cor:can=ps}
 Let $A$ and $B$ be two algebras. Then the following two statements are equivalent.
 \begin{enumerate}
 \item Any derived equivalence $\mathbf{D}(A\mbox{-}{\rm Mod})\rightarrow \mathbf{D}(B\mbox{-}{\rm Mod})$ is canonical.
 \item Any pseudo-identity on $\mathbf{D}(A\mbox{-}{\rm Mod})$ is isomorphic to the identity functor.
 \end{enumerate}
 \end{cor}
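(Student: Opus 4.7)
The plan is to deduce both implications from the factorization theorem (Theorem~\ref{thm:factor}), in a purely formal manner.

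For the implication (2) $\Rightarrow$ (1), I would take an arbitrary derived equivalence $F\colon \mathbf{D}(A\mbox{-}{\rm Mod})\to \mathbf{D}(B\mbox{-}{\rm Mod})$ and apply Theorem~\ref{thm:factor} to produce an isomorphism $F\simeq F_2 F_1$ with $F_1$ a pseudo-identity on $\mathbf{D}(A\mbox{-}{\rm Mod})$ and $F_2$ a canonical derived equivalence. By hypothesis (2), $F_1$ is isomorphic to the identity functor, whence $F\simeq F_2$ is itself canonical, establishing (1).

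For the reverse implication (1) $\Rightarrow$ (2), the idea is to compose a given pseudo-identity with some fixed derived equivalence and then read off the conclusion from the uniqueness clause of Theorem~\ref{thm:factor}. Explicitly, I would fix a derived equivalence $F_0\colon \mathbf{D}(A\mbox{-}{\rm Mod})\to \mathbf{D}(B\mbox{-}{\rm Mod})$ (the corollary is of substance only when $A$ and $B$ are derived equivalent, so such an $F_0$ may be assumed to exist) and, given a pseudo-identity $P$ on $\mathbf{D}(A\mbox{-}{\rm Mod})$, consider the derived equivalence $F_0\circ P\colon \mathbf{D}(A\mbox{-}{\rm Mod})\to \mathbf{D}(B\mbox{-}{\rm Mod})$. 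By (1), both $F_0$ and $F_0\circ P$ are canonical. Hence $F_0\circ P$ admits two factorizations in the sense of Theorem~\ref{thm:factor}: first, $F_0\circ P$ itself, viewed as the canonical equivalence $F_0$ composed with the pseudo-identity $P$; and second, $(F_0\circ P)\circ {\rm Id}_{\mathbf{D}(A\mbox{-}{\rm Mod})}$, viewed as the canonical equivalence $F_0\circ P$ composed with the trivial pseudo-identity. The uniqueness assertion in Theorem~\ref{thm:factor} then forces $P\simeq {\rm Id}_{\mathbf{D}(A\mbox{-}{\rm Mod})}$, which is (2).

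There is no real obstacle to carry out: the whole argument is formal given the factorization theorem, and neither pre-weight structures nor tilting complexes re-enter the picture. The only mildly delicate point is the implicit assumption, in the direction (1) $\Rightarrow$ (2), that some derived equivalence $F_0$ from $A$ to $B$ exists; this is the standing context of the corollary, since otherwise statement (1) is vacuous while (2) remains a nontrivial condition on $A$ alone.
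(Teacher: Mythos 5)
Your proof is correct, and the direction (2) $\Rightarrow$ (1) is exactly the paper's argument: apply the existence part of Theorem~\ref{thm:factor} and replace the pseudo-identity factor by the identity. For (1) $\Rightarrow$ (2) the paper instead cites Corollary~\ref{cor:can+ps}, i.e.\ it wants to say that a pseudo-identity on $\mathbf{D}(A\mbox{-}{\rm Mod})$ is canonical and hence isomorphic to the identity; taken literally this is slightly awkward, since hypothesis (1) only concerns equivalences landing in $\mathbf{D}(B\mbox{-}{\rm Mod})$, not auto-equivalences of $\mathbf{D}(A\mbox{-}{\rm Mod})$, and closure of canonical equivalences under composition/inverses is exactly the open Question~\ref{ques:1}. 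Your variant sidesteps this: you compose the pseudo-identity $P$ with a fixed equivalence $F_0$ and compare the two factorizations $F_0\circ P\simeq F_0\circ P\simeq (F_0\circ P)\circ{\rm Id}$ via the uniqueness clause of Theorem~\ref{thm:factor} (noting that ${\rm Id}$ is a pseudo-identity and that $F_0$, $F_0\circ P$ are canonical by (1)). This is the same ``two factorizations plus uniqueness'' trick that proves Corollary~\ref{cor:can+ps}, so the routes are morally identical, but yours is the more careful reading of what (1) actually supplies. Your caveat about needing some derived equivalence $F_0$ to exist is also well taken: without it (1) is vacuous while (2) is a genuine condition on $A$, so the implication (1) $\Rightarrow$ (2) really presupposes that $A$ and $B$ are derived equivalent --- which is the paper's standing context in the places where this direction is used, e.g.\ Lemma~\ref{lem:stan+ps}, while Theorem~\ref{thm:hereditary+can} only uses (2) $\Rightarrow$ (1).
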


 \begin{proof}
 The implication ``(1) $\Rightarrow$ (2)" follows from Corollary~\ref{cor:can+ps}, and the converse follows from the existence of the factorization in  Theorem~\ref{thm:factor}.
 \end{proof}

  In view of  \cite[the remarks after~Definition~3.4]{Ric91}, it is natural to propose the following conjecture.

\begin{conj}\label{conj:can}
Any derived equivalence between algebras is canonical.
\end{conj}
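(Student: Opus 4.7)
The plan is to invoke Corollary~\ref{cor:can=ps}, which reduces Conjecture~\ref{conj:can} to the assertion that every pseudo-identity $F$ on $\mathbf{D}(A\mbox{-Mod})$ is isomorphic to the genuine identity functor. Given such an $F$, Lemma~\ref{lem:pseudo-D} combined with the Eilenberg--Watts theorem produces a natural isomorphism $\eta_0\colon F|_{A\mbox{-Mod}} \xrightarrow{\sim} {\rm Id}_{A\mbox{-Mod}}$. The real work is then to extend $\eta_0$ coherently to a natural isomorphism $F \simeq {\rm Id}_{\mathbf{D}(A\mbox{-Mod})}$ of triangle functors.

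First I would extend $\eta_0$ to $\mathbf{D}^b(A\mbox{-Mod})$. By \cite[Lemma~3.6]{CY}, the restriction of $F$ to $\mathbf{D}^b(A\mbox{-Mod})$ is already fully faithful as a map into $\mathbf{D}(A\mbox{-Mod})$, and Proposition~\ref{prop:object-b}, applied to the canonical pre-weight structure on $\mathbf{D}(A\mbox{-Mod})$ whose core is $A\mbox{-Mod}$ (so that the vanishing hypothesis ${\rm Hom}(A\mbox{-Mod}, \Sigma^{-n}(A\mbox{-Mod})) = 0$ for $n \geq 1$ is automatic), yields an object-wise isomorphism $F(X) \simeq X$ on the bounded part. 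Refining the inductive choice in that proof so as to be compatible with morphisms, one should promote this to a natural isomorphism $\eta_b\colon F|_{\mathbf{D}^b} \simeq {\rm Id}$ of triangle functors.

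Next I would pass from $\mathbf{D}^b$ to $\mathbf{D}(A\mbox{-Mod})$ using the two-sided convergence of the canonical pre-weight structure established in Lemma~\ref{lem:can-pws-D}. For an arbitrary $X$, decomposing via a triangle $U \to X \to V$ with $U \in \mathcal{D}_{\leq 0}$ and $V \in \mathcal{D}_{\geq 1}$, one applies the left tower of $U$ and the right tower of $V$ and assembles the isomorphisms $\eta_b$ on their bounded truncations into isomorphisms $F(U) \simeq U$ and $F(V) \simeq V$ as homotopy colimits and limits, exactly in the fashion of Theorem~\ref{thm:trivial-action}; Proposition~\ref{prop:vanishing} then controls the off-diagonal Hom terms needed to glue these into an isomorphism $F(X) \simeq X$.

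The main obstacle is naturalness of the resulting isomorphism across morphisms in the unbounded category. The towers provided by Proposition~\ref{prop:tower} are only unique up to non-canonical isomorphism, so the assembled isomorphism $F(X) \simeq X$ depends on choices that are not obviously functorial in $X$. In Proposition~\ref{prop:hereditary+ps} this difficulty evaporates because every complex over a hereditary abelian category splits canonically as a (co)product of its shifted cohomologies (via \cite[Theorem~A.1]{PS}), providing precisely the special isomorphisms demanded by Lemma~\ref{lem:extend}. For general $A$ no such canonical splitting exists, and one would presumably need to replace the object-wise argument by a more structural one: either a functorial truncation tower (perhaps via a dg enhancement or a model-categorical resolution) or a direct construction of a bimodule-like object representing $F$, in the spirit of Rickard's original standardness question. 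Overcoming this functoriality issue is essentially the heart of Conjecture~\ref{conj:can}, and is why the conjecture remains open beyond the hereditary case handled by Theorem~D.
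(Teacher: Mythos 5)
The statement you are addressing is a conjecture, not a theorem: the paper does not prove it, and neither do you. Your proposal correctly reproduces the paper's own reduction — Corollary~\ref{cor:can=ps} (built from Theorem~\ref{thm:factor} and Corollary~\ref{cor:can+ps}) shows that Conjecture~\ref{conj:can} is equivalent to the assertion that every pseudo-identity on $\mathbf{D}(A\mbox{-Mod})$ is isomorphic to the identity functor, and this is exactly the remark the paper makes immediately after that corollary — but the remaining step is left unresolved, and you concede as much in your final paragraph. Concretely, the gap is where you write that ``refining the inductive choice'' in Proposition~\ref{prop:object-b} ``should promote'' the object-wise isomorphisms $F(X)\simeq X$ to a natural isomorphism $\eta_b$ on $\mathbf{D}^b(A\mbox{-Mod})$. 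The isomorphisms $\theta_i$ in that proof are produced by the axiom (TR3), which gives existence of a completing isomorphism for one chosen triangle but no compatibility with arbitrary morphisms between different objects; there is no known way to make these choices coherent in general, and this is precisely the D-standardness problem of \cite{CY} (compare \cite[Conjecture~5.11]{CY}), open already at the bounded level. Your subsequent passage to the unbounded category via towers inherits the same defect, as you note: the towers of Proposition~\ref{prop:tower} are non-canonical.

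So the verdict is that your text is a correct analysis of why the conjecture is hard, matching the paper's own reduction and its hereditary-case resolution (Proposition~\ref{prop:hereditary+ps} exploits the canonical splitting of complexes over a hereditary abelian category via \cite[Theorem~A.1]{PS}, feeding the special isomorphisms into Lemma~\ref{lem:extend}; this is how Theorem~\ref{thm:hereditary+can} is obtained), but it is not a proof of the statement. A genuine proof would have to supply the missing functoriality — for instance through an enhancement-level or bimodule-level construction, as you suggest — and no such argument is given either in your proposal or in the paper, which is exactly why the statement is recorded there as a conjecture.
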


Corollary~\ref{cor:can=ps} implies that  Conjecture~\ref{conj:can} is equivalent to the following question: is any pseudo-identity on the derived category of a module category is isomorphic to the genuine identity functor; compare \cite[Conjecture~5.11]{CY}.

Recall that an algebra $A$ is \emph{left hereditary} if its left global dimension is at most one. This is equivalent to the condition that the module category $A\mbox{-Mod}$ is a hereditary abelian category.

\begin{thm}\label{thm:hereditary+can}
Let $A$ and $B$ be two algebras with $A$ being left hereditary. Then any derived equivalence $F\colon \mathbf{D}(A\mbox{-}{\rm Mod})\rightarrow \mathbf{D}(B\mbox{-}{\rm Mod})$ is canonical.
\end{thm}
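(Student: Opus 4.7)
The plan is to combine the factorization theorem with the rigidity of pseudo-identities in the hereditary case. By Theorem~\ref{thm:factor}, the given derived equivalence admits a factorization $F\simeq F_2 F_1$, where $F_1$ is a pseudo-identity on $\mathbf{D}(A\mbox{-}{\rm Mod})$ and $F_2\colon \mathbf{D}(A\mbox{-}{\rm Mod})\to \mathbf{D}(B\mbox{-}{\rm Mod})$ is a canonical derived equivalence. It then suffices to show that $F_1$ is isomorphic to the identity functor; granting this, $F\simeq F_2$ is canonical, as required.

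For that final step, I would invoke the hypothesis that $A$ is left hereditary, which means that $\mathcal{H}=A\mbox{-}{\rm Mod}$ is a hereditary abelian category. The canonical pre-weight structure $(\mathcal{D}_{\geq 0},\mathcal{D}_{\leq 0})$ on $\mathbf{D}(\mathcal{H})$ of Example~\ref{exm:D} is precisely the structure with respect to which pseudo-identities on $\mathbf{D}(A\mbox{-}{\rm Mod})$ are defined. Proposition~\ref{prop:hereditary+ps} then applies directly and yields $F_1\simeq {\rm Id}_{\mathbf{D}(A\mbox{-}{\rm Mod})}$, completing the argument.

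Equivalently, one can appeal to Corollary~\ref{cor:can=ps}: the two conditions there are equivalent, and the second one (every pseudo-identity on $\mathbf{D}(A\mbox{-}{\rm Mod})$ is isomorphic to the identity) holds by Proposition~\ref{prop:hereditary+ps} in the hereditary setting. This automatically gives the first, which is exactly the statement of Theorem~\ref{thm:hereditary+can}.

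Since the hard work has already been done in earlier sections, there is no real obstacle at this point; the argument is a one-line assembly. I would only need to be careful to cite the correct ingredients in the correct order, and to note explicitly that ``left hereditary'' is what makes $A\mbox{-}{\rm Mod}$ a hereditary abelian category so that Proposition~\ref{prop:hereditary+ps} applies. No assumption on $B$ beyond being a $\mathbb{K}$-algebra is used, matching the statement of the theorem.
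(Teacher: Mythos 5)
Your proposal is correct and matches the paper's argument: the paper proves this exact statement by combining Proposition~\ref{prop:hereditary+ps} with Corollary~\ref{cor:can=ps}, which is the second route you describe, and your first route (invoking Theorem~\ref{thm:factor} directly and killing the pseudo-identity factor via Proposition~\ref{prop:hereditary+ps}) is just the unpacked form of that corollary. Nothing is missing.
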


\begin{proof}
We just combine Proposition~\ref{prop:hereditary+ps} and Corollary~\ref{cor:can=ps}.
\end{proof}

\begin{rem}
We apply Theorem~\ref{thm:hereditary+can} to the setting considered in \cite{KZ}. It follows that any derived equivalence starting from a hereditary order is canonical. This seems to work in concert with \cite[the sixth paragraph in p.1898]{KZ}, which claims that such a derived equivalence usually is not induced by a two-sided tilting complex. This observation somehow motivates the notion of canonical derived equivalence.
\end{rem}

In view of Lemma~\ref{lem:stan-comp} below, the following questions seem to be natural.

\begin{ques}\label{ques:1}
Is the composition of two canonical derived equivalences canonical? Is any quasi-inverse of a canonical derived equivalence still canonical?
\end{ques}

Concerning the questions above, we have the following observation.

\begin{prop}
   Assume that the composition of any two canonical derived equivalences is always canonical. Then any quasi-inverse of a canonical derived equivalence is canonical.
\end{prop}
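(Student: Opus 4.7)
The plan is to apply the factorization theorem to the quasi-inverse and then use the closure assumption to collapse the pseudo-identity factor to the identity, using Corollary~\ref{cor:can+ps}.

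Let $F\colon \mathbf{D}(A\mbox{-}{\rm Mod})\to \mathbf{D}(B\mbox{-}{\rm Mod})$ be a canonical derived equivalence, and let $F^{-1}\colon \mathbf{D}(B\mbox{-}{\rm Mod})\to\mathbf{D}(A\mbox{-}{\rm Mod})$ be a quasi-inverse. First, I would apply Theorem~\ref{thm:factor} to $F^{-1}$ to obtain a factorization $F^{-1}\simeq G_2 G_1$, where $G_1$ is a pseudo-identity on $\mathbf{D}(B\mbox{-}{\rm Mod})$ and $G_2\colon \mathbf{D}(B\mbox{-}{\rm Mod})\to \mathbf{D}(A\mbox{-}{\rm Mod})$ is a canonical derived equivalence. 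The goal is then to show that the pseudo-identity $G_1$ is actually isomorphic to ${\rm Id}_{\mathbf{D}(B\mbox{-}{\rm Mod})}$, which would force $F^{-1}\simeq G_2$ and hence finish the proof.

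Next, I would set $H=F\circ G_2\colon \mathbf{D}(B\mbox{-}{\rm Mod})\to \mathbf{D}(B\mbox{-}{\rm Mod})$. By the standing assumption that the composition of two canonical derived equivalences is canonical, $H$ is a canonical derived auto-equivalence of $\mathbf{D}(B\mbox{-}{\rm Mod})$. From $F\circ F^{-1}\simeq {\rm Id}_{\mathbf{D}(B\mbox{-}{\rm Mod})}$ and $F^{-1}\simeq G_2 G_1$ we obtain $H\circ G_1\simeq {\rm Id}_{\mathbf{D}(B\mbox{-}{\rm Mod})}$, so $H\simeq G_1^{-1}$. Since the inverse of a pseudo-identity is again a pseudo-identity (as noted immediately after Definition~\ref{defn:pseudo}), $H$ is a canonical derived equivalence which is isomorphic to a pseudo-identity.

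Applying Corollary~\ref{cor:can+ps} to $H$ yields $H\simeq {\rm Id}_{\mathbf{D}(B\mbox{-}{\rm Mod})}$. Combining this with $H\circ G_1\simeq {\rm Id}$ gives $G_1\simeq {\rm Id}_{\mathbf{D}(B\mbox{-}{\rm Mod})}$, whence $F^{-1}\simeq G_2 G_1\simeq G_2$ is canonical, as required. There is essentially no obstacle here beyond lining up the factorization with the hypothesis; the only point worth checking carefully is that inverses of pseudo-identities are pseudo-identities, which has already been observed in the paper, and that Corollary~\ref{cor:can+ps} may be applied to $H$, which is immediate once $H\simeq G_1^{-1}$ has been identified.
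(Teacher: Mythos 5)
Your proof is correct and follows essentially the same route as the paper: factor $F^{-1}\simeq F_2F_1$ via Theorem~\ref{thm:factor}, observe that $FF_2\simeq F_1^{-1}$ is both canonical (by the standing assumption) and isomorphic to a pseudo-identity, and apply Corollary~\ref{cor:can+ps} to conclude $F_1\simeq{\rm Id}$ and hence $F^{-1}\simeq F_2$. The only difference is notational (your intermediate functor $H=F\circ G_2$).
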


\begin{proof}
   Let $F\colon \mathbf{D}(A\mbox{-}{\rm Mod})\rightarrow \mathbf{D}(B\mbox{-}{\rm Mod})$ be an arbitrary canonical derived equivalence with $F^{-1}$ its quasi-inverse. Theorem~\ref{thm:factor} implies that there is a factorization $F^{-1}\simeq F_2F_1$ with $F_1$ a pseudo-identity on $\mathbf{D}(B\mbox{-}{\rm Mod})$ and $F_2$ canonical. We have a natural isomorphism $FF_2\simeq F_1^{-1}$. We observe that $F_1^{-1}$ is a pseudo-identity and that by the assumption $FF_2$ is canonical. By Corollary~\ref{cor:can+ps}, we infer $F_1^{-1}$ is isomorphic to the identity functor, and thus so is $F_1$. Therefore, we have a natural isomorphism $F^{-1}\simeq F_2$, which implies that $F^{-1}$ is canonical.
\end{proof}

\section{Standard derived equivalences}\label{sec:stan}

In this section, we prove that for flat algebras,  canonical derived equivalences coincide with  standard derived equivalences in the sense of \cite{Ric91}; see Theorem~\ref{thm:stan=can}.

Let $A$ be a $\mathbb{K}$-algebra. Denote by $A^e=A\otimes A^{\rm op}$ its \emph{enveloping algebra}. Here, the unadorned tensor means $\otimes_\mathbb{K}$. We identify $A$-$A$-bimodules with $A^e$-modules.

The following fact is well known.

\begin{lem}\label{lem:bimod-A}
Assume that $Z$ is a complex of $A$-$A$-bimodules such that $Z\otimes^\mathbb{L}_A-\colon \mathbf{D}(A\mbox{-}{\rm Mod})\rightarrow \mathbf{D}(A\mbox{-}{\rm Mod})$ is an auto-equivalence. Then we have an isomorphism $Z\simeq A$ in $\mathbf{D}(A^e\mbox{-}{\rm Mod})$ if and only if  ${Z\otimes^\mathbb{L}_A-}|_{A\mbox{-}{\rm proj}} \colon A\mbox{-}{\rm proj}\rightarrow \mathbf{D}(A\mbox{-}{\rm Mod})$ is isomorphic to the inclusion $A\mbox{-}{\rm proj}\hookrightarrow \mathbf{D}(A\mbox{-}{\rm Mod})$.
\end{lem}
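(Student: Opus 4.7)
My plan is to reduce the statement to a purely cohomological computation: the forward direction is immediate by functoriality of the derived tensor product, while the converse will follow by showing that $Z$ has cohomology concentrated in degree zero, that $H^0(Z)\simeq A$ as bimodules, and then invoking the canonical truncation in the abelian category of $A^e$-modules.

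For the ``only if'' direction, an isomorphism $\alpha\colon Z\stackrel{\sim}\to A$ in $\mathbf{D}(A^e\mbox{-}{\rm Mod})$ induces, upon derived tensoring over $A$, a natural isomorphism of triangle functors $Z\otimes_A^\mathbb{L} -\simeq A\otimes_A^\mathbb{L} -\simeq {\rm Id}_{\mathbf{D}(A\mbox{-}{\rm Mod})}$; its restriction to $A\mbox{-}{\rm proj}$ is then naturally isomorphic to the inclusion.

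For the converse, write $F=Z\otimes_A^\mathbb{L}-$ and let $\eta\colon F|_{A\mbox{-}{\rm proj}}\to {\rm inc}$ be the given natural isomorphism. Evaluating at $A$ yields an isomorphism $\eta_A\colon F(A)\simeq Z\to A$ in $\mathbf{D}(A\mbox{-}{\rm Mod})$, which forces $H^n(Z)=0$ for $n\neq 0$ and gives an isomorphism $H^0(\eta_A)\colon H^0(Z)\to A$ of left $A$-modules. For each $a\in A$, naturality of $\eta$ on the morphism $r_a\colon A\to A$ in $A\mbox{-}{\rm proj}$ produces the identity $\eta_A\circ F(r_a)=r_a\circ \eta_A$ in $\mathbf{D}(A\mbox{-}{\rm Mod})$. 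The key unpacking, which I would verify directly using a K-flat right $A$-module resolution of $Z$, is that under the canonical identification $F(A)=Z\otimes_A^\mathbb{L} A\simeq Z$ the endomorphism $F(r_a)$ corresponds to right multiplication by $a$ on the bimodule $Z$. Applying $H^0$ then shows that $H^0(\eta_A)$ is also compatible with the right $A$-actions, so it is an isomorphism $H^0(Z)\simeq A$ of $A^e$-modules.

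To conclude, since $Z$ has cohomology concentrated in degree zero, I would invoke the canonical truncation in the abelian category $A^e\mbox{-}{\rm Mod}$ to obtain a zigzag of quasi-isomorphisms of complexes of $A^e$-modules
\[
Z \xleftarrow{\;\sim\;} \tau^{\leq 0}Z \xrightarrow{\;\sim\;} H^0(Z)\simeq A,
\]
which descends to the desired isomorphism $Z\simeq A$ in $\mathbf{D}(A^e\mbox{-}{\rm Mod})$. The only nontrivial point I expect is the identification of $F(r_a)$ with bimodule right multiplication on $Z$: this is what allows the derived-categorical naturality available in $\mathbf{D}(A\mbox{-}{\rm Mod})$ to recover the full bimodule structure on $H^0(Z)$. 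The remaining steps are formal manipulations with truncations and derived tensor functoriality.
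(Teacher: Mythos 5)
Your proposal is correct, but it takes a noticeably different route from the paper in the ``if'' direction (the ``only if'' direction is the same one-line argument in both). The paper first reduces to the case where $Z$ is a bimodule concentrated in degree zero, then uses the full faithfulness of $F=Z\otimes^\mathbb{L}_A-$ to show $H^n(F(M))=0$ for every module $M$ and $n\neq 0$, deduces ${\rm Tor}^A_n(Z,M)=0$, hence that $Z$ is flat as a right $A$-module and $F$ is the honest tensor functor $Z\otimes_A-$, and only then extracts the bimodule isomorphism $Z\simeq A$ from the hypothesis on $A\mbox{-}{\rm proj}$. You bypass all of this: you only evaluate the natural isomorphism at $A$ and at the morphisms $r_a$, identify $F(r_a)$ with right multiplication by $a$ on $Z$ under $Z\otimes^\mathbb{L}_A A\simeq Z$, apply $H^0$ to get a bimodule isomorphism $H^0(Z)\simeq A$, and finish with the truncation zigzag $Z\leftarrow \tau^{\leq 0}Z\rightarrow H^0(Z)$ in complexes of $A^e$-modules. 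This is shorter and in fact slightly more general, since your converse never uses that $F$ is an autoequivalence, whereas the paper's Tor-vanishing step does; what the paper's detour buys is the extra structural information (right flatness of $Z$, identification of $F$ with an underived tensor functor) in the Eilenberg--Watts spirit it also uses in Lemma~\ref{lem:pseudo-D}. One small point of hygiene: to verify that $F(r_a)$ is right multiplication by $a$, compute $Z\otimes^\mathbb{L}_A-$ by resolving the \emph{argument} (here $A$ itself is already K-projective, so $F(A)=Z\otimes_AA\cong Z$ on the nose); resolving $Z$ merely as a complex of right $A$-modules, as you suggest, would lose the left $A$-structure needed for $F$ to land in $\mathbf{D}(A\mbox{-}{\rm Mod})$. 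With that convention the identification is immediate and your argument goes through.
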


\begin{proof}
The ``only if" part is clear, since any isomorphism  $Z\simeq A$ in $\mathbf{D}(A^e\mbox{-}{\rm Mod})$ induces an isomorphism $Z\otimes^\mathbb{L}_A-\simeq A\otimes^\mathbb{L}_A-$, which is isomorphic to the identity functor.

For the ``if" part, we write $F=Z\otimes^\mathbb{L}_A-$. The assumption implies that $F(A)=Z$ is isomorphic to $A$ in $\mathbf{D}(A\mbox{-}{\rm Mod})$. It follows that $H^n(Z)=0$ for $n\neq 0$. Therefore, we may assume that $Z$ is an $A$-$A$-bimodule, which is viewed as a complex concentrated in degree zero. We mention that $Z$ is isomorphic to $A$ as a left $A$-module.

 For any $A$-module $M$ and any $n\neq 0$, we have isomorphisms
\begin{align*}
H^n(FM) &\simeq {\rm Hom}_{\mathbf{D}(A\mbox{-}{\rm Mod})}(A, \Sigma^n(FM))\\
&\simeq {\rm Hom}_{\mathbf{D}(A\mbox{-}{\rm Mod})}(F(A), \Sigma^n(FM))\\
&\simeq {\rm Hom}_{\mathbf{D}(A\mbox{-}{\rm Mod})}(A, \Sigma^n(M))=0.
\end{align*}
Here, the last isomorphism uses the fact that $F$ is an auto-equivalence.  It follows from the isomorphisms above that $F(M)=Z\otimes^\mathbb{L}_A M$ is always concentrated in degree zero. In particular, we have ${\rm Tor}_n^A(Z, M)=0$ for $n\geq 1$. We infer that $Z$ is a flat right $A$-module.  Hence, $F$ is isomorphic to the usual  tensor functor $Z\otimes_A-$ applied on complexes. Therefore, the assumption implies that $Z\otimes_A-\colon A\mbox{-}{\rm proj}\rightarrow A\mbox{-}{\rm proj}$ is isomorphic to the identity functor. From this, we infer infer that $Z$ is isomorphic to $A$ as an $A$-$A$-bimodule.
\end{proof}

In what follows, we assume that both $A$ and $B$ are flat $\mathbb{K}$-algebras.

For a complex $X$ of $B$-$A$-bimodules and a complex $Y$ of $A$-$B$-bimodules, the derived tensor product $X\otimes^\mathbb{L}_A Y$ is naturally a complex of $B$-$B$-bimodules. Moreover, for any complex $N$ of $B$-modules, we have a natural isomorphism
\begin{align}\label{iso:XY}
X\otimes^\mathbb{L}_A (Y\otimes^\mathbb{L}_B N)\simeq (X\otimes^\mathbb{L}_A Y)\otimes^\mathbb{L}_B N
\end{align}
 in $\mathbf{D}(B\mbox{-Mod})$. Similarly, for a complex $M$ of $A$-modules, we have a natural isomorphism
\begin{align}\label{iso:YX}
Y\otimes^\mathbb{L}_B (X\otimes^\mathbb{L}_A M)\simeq (Y\otimes^\mathbb{L}_B X)\otimes^\mathbb{L}_A M
\end{align}
in  $\mathbf{D}(A\mbox{-Mod})$.

The following result is known at least when the algebras are projective; see \cite[Section~4]{Ric91} and \cite[Theorem~1.6 and Remark~1.12]{Yek}. It seems that in the flat case, the implication ``(1) $\Rightarrow$ (3)" is somehow nontrivial. For resolutions of complexes, we refer to \cite[Section~3]{Kel94} and \cite[Section~5]{Kra}.

\begin{prop}
Let $X$ be a complex of $B$-$A$-bimodules. Then the following statements are equivalent.
\begin{enumerate}
\item The functor $X\otimes^\mathbb{L}_A- \colon \mathbf{D}(A\mbox{-}{\rm Mod})\rightarrow \mathbf{D}(B\mbox{-}{\rm Mod})$ is an equivalence.
\item The functor $\mathbb{R}{\rm Hom}_B(X, -)\colon \mathbf{D}(B\mbox{-}{\rm Mod})\rightarrow \mathbf{D}(A\mbox{-}{\rm Mod})$ is an equivalence.
\item There exists a complex $Y$ of $A$-$B$-bimodules such that there are an isomorphism
$X\otimes_A^\mathbb{L} Y\simeq B$ in $\mathbf{D}(B^e\mbox{-}{\rm Mod})$ and an isomorphism $Y\otimes_B^\mathbb{L} X\simeq A$ in $\mathbf{D}(A^e\mbox{-}{\rm Mod})$.
\end{enumerate}
\end{prop}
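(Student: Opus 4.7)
My plan splits into three implications: (1)~$\Leftrightarrow$~(2), (3)~$\Rightarrow$~(1), and the substantive (1)~$\Rightarrow$~(3). The equivalence (1)~$\Leftrightarrow$~(2) is a formal consequence of the adjunction $(X\otimes^\mathbb{L}_A-,\; \mathbb{R}{\rm Hom}_B(X,-))$: if either member of an adjoint pair is a triangle equivalence, its partner is forced to be a quasi-inverse and hence also an equivalence. The implication (3)~$\Rightarrow$~(1) is then obtained by applying (\ref{iso:XY}) and (\ref{iso:YX}) to the bimodule isomorphisms $X\otimes_A^\mathbb{L} Y\simeq B$ and $Y\otimes_B^\mathbb{L} X\simeq A$: writing $F=X\otimes^\mathbb{L}_A-$ and $G=Y\otimes^\mathbb{L}_B-$, these yield natural isomorphisms $FG\simeq {\rm Id}$ and $GF\simeq {\rm Id}$.

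For (1)~$\Rightarrow$~(3), the strategy is to produce $Y$ as a concrete complex of $A$-$B$-bimodules whose associated derived tensor functor represents a quasi-inverse of $F$. I would first choose a K-projective resolution $\pi\colon P\to X$ of $X$ in the category of complexes of $B$-$A$-bimodules (cf.\ \cite{Kel94, Kra}); using the $\mathbb{K}$-flatness of $A$, such a $P$ can be arranged to be also K-projective as a complex of left $B$-modules. Set $Y:={\rm Hom}_B(P, B)$, which is canonically a complex of $A$-$B$-bimodules. Since $F$ is an equivalence and $A$ is compact in $\mathbf{D}(A\mbox{-}{\rm Mod})$, $X\simeq F(A)$ is compact in $\mathbf{D}(B\mbox{-}{\rm Mod})$; therefore the canonical natural transformation $Y\otimes^\mathbb{L}_B N\to \mathbb{R}{\rm Hom}_B(X, N)$ is an isomorphism for every $N$, since both sides are triangle functors commuting with arbitrary coproducts (the right side by compactness of $X$) and they agree on the compact generator $B$. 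Hence the quasi-inverse $G\simeq \mathbb{R}{\rm Hom}_B(X,-)$ of $F$ is isomorphic to $Y\otimes^\mathbb{L}_B-$. Now $Z:=Y\otimes^\mathbb{L}_B X$ is a complex of $A$-$A$-bimodules and (\ref{iso:YX}) gives $Z\otimes^\mathbb{L}_A M \simeq GF(M)\simeq M$ naturally in $M\in A\mbox{-}{\rm proj}$; Lemma~\ref{lem:bimod-A} forces $Z\simeq A$ in $\mathbf{D}(A^e\mbox{-}{\rm Mod})$. The companion isomorphism $X\otimes^\mathbb{L}_A Y\simeq B$ in $\mathbf{D}(B^e\mbox{-}{\rm Mod})$ follows symmetrically from $FG\simeq {\rm Id}$ together with the analogue of Lemma~\ref{lem:bimod-A} with the roles of $A$ and $B$ swapped.

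The most delicate step I foresee is the bimodule resolution $P$: one must simultaneously control its K-projectivity as a bimodule complex, so that ${\rm Hom}_B(P,B)$ computes $\mathbb{R}{\rm Hom}_B(X,B)$ with the correct $A$-$B$-bimodule structure, and its K-flatness upon restriction to left $B$-modules, so that it can also represent the derived tensor product. The simultaneous $\mathbb{K}$-flatness of $A$ and $B$ is exactly what permits this joint control; over a field the resolution issue is trivial, but over a general commutative ring $\mathbb{K}$ it requires the fine resolution machinery of \cite{Kel94, Kra}. Once $Y$ has been constructed and the identification $G\simeq Y\otimes^\mathbb{L}_B-$ is in hand, the remainder reduces to formal bookkeeping with the associativity isomorphisms and two applications of Lemma~\ref{lem:bimod-A}.
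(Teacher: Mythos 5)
Your reduction (1)$\Leftrightarrow$(2) by adjunction, the implication (3)$\Rightarrow$(1) via (\ref{iso:XY}) and (\ref{iso:YX}), and the final bookkeeping with Lemma~\ref{lem:bimod-A} all match the paper. The gap is in the construction of $Y$ for (1)$\Rightarrow$(3). You take a dg-projective (K-projective) resolution $P\to X$ of the $B$-$A$-bimodule $X$ and assert that, because $A$ is $\mathbb{K}$-flat, $P$ ``can be arranged to be also K-projective as a complex of left $B$-modules''. This is not true in general: the components of a semi-free $B\otimes A^{\rm op}$-module are direct sums of copies of $B\otimes A^{\rm op}$, and as left $B$-modules these are flat when $A$ is $\mathbb{K}$-flat, but projective only when $A$ is $\mathbb{K}$-projective. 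So the restriction of $P$ to $B\mbox{-}{\rm Mod}$ is only K-flat. K-flatness is enough for derived tensor functors but not for ${\rm Hom}$: with $Y={\rm Hom}_B(P,B)$ there is no reason that the comparison map $Y\otimes^{\mathbb{L}}_B B\simeq {\rm Hom}_B(P,B)\to {\rm Hom}_B(P,\mathbf{i}(B))\simeq \mathbb{R}{\rm Hom}_B(X,B)$ is a quasi-isomorphism, i.e.\ your two coproduct-preserving functors need not agree on the generator $B$, which is exactly the step your localizing-subcategory argument rests on. This is precisely the difficulty the paper flags when it says that in the flat case the implication (1)$\Rightarrow$(3) is ``somehow nontrivial''; your route is the classical one of \cite{Ric91, Yek}, which works when the algebras are $\mathbb{K}$-projective (e.g.\ over a field) but needs a genuine new idea over a general base.

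The paper avoids resolving $X$ altogether: it fixes an injective resolution $\epsilon\colon B\to I$ of $B$ as a $B$-$B$-bimodule, uses the $\mathbb{K}$-flatness of $B$ to see that $I$ is still an injective (hence K-injective) resolution of the one-sided module $B$, and sets $Y={\rm Hom}_B(X,I)$. The comparison map $\alpha_M\colon {\rm Hom}_B(X,I)\otimes_B \mathbf{p}(M)\to {\rm Hom}_B(X,\mathbf{i}(M))$ is then an isomorphism at $M=B$ because $I\otimes_B\mathbf{p}(B)=I\to \mathbf{i}(B)$ is a homotopy equivalence of K-injective resolutions, and ${\rm Hom}_B(X,-)$ sends homotopy equivalences to quasi-isomorphisms with no hypothesis on $X$; the extension to all $M$ is the same localizing-subcategory argument you use (with coproduct-preservation coming from $\mathbb{R}{\rm Hom}_B(X,-)$ being an equivalence rather than from compactness, though your compactness argument is also fine). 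If you want to repair your version, you would either have to assume $A$ is $\mathbb{K}$-projective, or replace $Y={\rm Hom}_B(P,B)$ by the paper's injective-resolution construction.
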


Following \cite[Definition~3.4]{Ric91}, such a complex $X$, viewed as an object in $\mathbf{D}(B\otimes A^{\rm op}\mbox{-Mod})$,  is called a \emph{two-sided tilting complex}, and the complex $Y$ above is called an \emph{inverse} of $X$, which is unique up to isomorphism in $\mathbf{D}(A\otimes B^{\rm op}\mbox{-Mod})$.

\begin{proof}
Recall that $X\otimes^\mathbb{L}_A- $ is left adjoint to  $\mathbb{R}{\rm Hom}_B(X, -)$. The implication ``(1) $\Leftrightarrow$ (2)" follows immediately. We infer ``(3) $\Rightarrow$ (1)" from the natural isomorphisms (\ref{iso:XY}) and (\ref{iso:YX}).

We now assume that (1) and (2) hold. To prove (3), we fix an injective resolution $\epsilon\colon B\rightarrow I$ of the $B$-$B$-bimodule $B$. The flatness of $B$ implies that $\epsilon$ is also an injective resolution of the one-sided module. For each complex $M$ of $B$-modules,  fix a dg-projective resolution $\pi_M\colon {\bf p}(M)\rightarrow M$ and a dg-injective resolution $\iota_M\colon M\rightarrow {\bf i}(M)$. We observe that
$$\epsilon\otimes_B {\rm Id}_{{\bf p}(M)}\colon {\bf p}(M)=B\otimes_B{\bf p}(M)\longrightarrow I\otimes_B {\bf p}(M)$$
is a quasi-isomorphism. Since ${\bf i}(M)$ is dg-injective, there exists a unique morphism $\vartheta_M$ in ${\bf K}(B\mbox{-Mod})$ making the following square commute.
\[
\xymatrix{
{\bf p}(M)\ar[d]_-{\pi_M} \ar[rr]^-{\epsilon\otimes_B {\rm Id}_{{\bf p}(M)}}  && I\otimes_B {\bf p}(M) \ar@{.>}[d]^-{\vartheta_M}\\
M\ar[rr]^-{\iota_M} && {\bf i}(M)
}\]
The uniqueness of $\vartheta_M$ implies that it is functorial in $M$. We observe that $\vartheta_M$ is a quasi-isomorphism. Since both ${\bf i}(B)$ and $I\otimes_B {\bf p}(B)=I$ are dg-injective resolutions of $B$, we infer that $\vartheta_B$ is even a homotopy equivalence.

 Consequently, we have a natural transformation
$$\alpha_M\colon {\rm Hom}_B(X, I)\otimes_B {\bf p}(M)\longrightarrow {\rm Hom}_B(X, I\otimes_B {\bf p}(M))\longrightarrow {\rm Hom}_B(X, {\bf i}(M)),$$
where the  morphism on the right is induced by $\vartheta_M$, and the  morphism on the left sends $f\otimes_B y$ to $(x\mapsto (-1)^{|x|\cdot |y|} f(x)\otimes_B y)$ for homogeneous elements $f\in {\rm Hom}_B(X, I)$, $y\in {\bf p}(M)$ and $x\in X$.

We might view $\alpha$ as a natural transformation
$$\alpha\colon {\rm Hom}_B(X, I)\otimes^\mathbb{L}_B-\longrightarrow \mathbb{R}{\rm Hom}_B(X, -).$$
Since $\vartheta_B$ is a homotopy equivalence, we infer that $\alpha_B$ is an isomorphism. The following full subcategory
$$\mathcal{L}=\{M\in \mathbf{D}(B\mbox{-Mod})\; |\; \alpha_M \mbox{ is an isomorphism}\}$$
is triangulated and contains $B$. As an equivalence, the functor $\mathbb{R}{\rm Hom}_B(X, -)$ commutes arbitrary coproducts. From this fact, we refer that $\mathcal{L}$ is localizing. By the well-known fact that ${\rm Loc}\langle B\rangle=\mathbf{D}(B\mbox{-Mod})$, we infer that $\mathcal{L}=\mathbf{D}(B\mbox{-Mod})$. In other words, $\alpha$ is a natural isomorphism.

Set $Y={\rm Hom}_B(X, I)$. Recall that $\mathbb{R}{\rm Hom}_B(X, -)$ is a quasi-inverse of $X\otimes_A^\mathbb{L}-$. We infer from the isomorphism $\alpha$ that both $(Y\otimes^\mathbb{L}_B X)\otimes^\mathbb{L}_A-$ and $(X\otimes^\mathbb{L}_A Y)\otimes^\mathbb{L}_B-$ are isomorphic to the identity functors. Then we deduce the required isomorphisms in (3) from Lemma~\ref{lem:bimod-A}.
\end{proof}

The following well-known notion is an unbounded version of the one in \cite[Definition~3.4]{Ric91}; compare \cite[Remark~1.10]{Yek}.

\begin{defn}[Rickard]
A derived equivalence $F\colon \mathbf{D}(A\mbox{-}{\rm Mod})\rightarrow \mathbf{D}(B\mbox{-}{\rm Mod})$ is called \emph{standard} if it is isomorphic to $X\otimes_A^\mathbb{L}-$ for some two-sided tilting complex $X$ of $B$-$A$-bimodules.
\end{defn}

The following facts are well known; see \cite[Proposition~4.1]{Ric91}.

\begin{lem}\label{lem:stan-comp}
The composition of two standard derived equivalences is standard, and any quasi-inverse of a standard derived equivalence is standard.\hfill $\square$
\end{lem}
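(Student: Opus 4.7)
The plan is to let $C$ be a third flat $\mathbb{K}$-algebra, and assume that $F_1\simeq X_1\otimes^\mathbb{L}_A-$ for some $X_1\in \mathbf{D}(B\otimes A^{\rm op}\mbox{-Mod})$ and $F_2\simeq X_2\otimes^\mathbb{L}_B-$ for some $X_2\in \mathbf{D}(C\otimes B^{\rm op}\mbox{-Mod})$, both being two-sided tilting complexes. The first step is to define the derived tensor product $X=X_2\otimes^\mathbb{L}_B X_1$ as an object of $\mathbf{D}(C\otimes A^{\rm op}\mbox{-Mod})$. This is a standard construction: one replaces $X_1$ by a $K$-flat resolution in the category of $B\otimes A^{\rm op}$-modules whose underlying complex of $B^{\rm op}$-modules is still $K$-flat (symmetrically for $X_2$); the flatness of $A$ and $C$ over $\mathbb{K}$ guarantees the existence of such resolutions, and makes the resulting $X$ independent of the chosen resolution up to quasi-isomorphism of bimodules.

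Next, I would invoke the associativity isomorphism (\ref{iso:XY}) to obtain a natural isomorphism
\[
X\otimes^\mathbb{L}_A-\;\simeq\; X_2\otimes^\mathbb{L}_B (X_1\otimes^\mathbb{L}_A-)\;\simeq\; F_2F_1
\]
of triangle functors $\mathbf{D}(A\mbox{-Mod})\rightarrow \mathbf{D}(C\mbox{-Mod})$. Since $F_2F_1$ is a composition of triangle equivalences, it is itself a derived equivalence, so condition~(1) of the preceding proposition holds for $X$. The implication ``(1) $\Rightarrow$ (3)" of that proposition then shows that $X$ is a two-sided tilting complex, which means by definition that $F_2F_1$ is standard.

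For the statement about quasi-inverses, let $F\simeq X\otimes^\mathbb{L}_A-$ be a standard derived equivalence with $X\in \mathbf{D}(B\otimes A^{\rm op}\mbox{-Mod})$, and let $G$ be any quasi-inverse of $F$. Applying ``(1) $\Rightarrow$ (3)" of the proposition to $X$ supplies a complex $Y\in \mathbf{D}(A\otimes B^{\rm op}\mbox{-Mod})$ and bimodule isomorphisms $X\otimes^\mathbb{L}_A Y\simeq B$ in $\mathbf{D}(B^e\mbox{-Mod})$ and $Y\otimes^\mathbb{L}_B X\simeq A$ in $\mathbf{D}(A^e\mbox{-Mod})$. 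By the associativity isomorphisms (\ref{iso:XY}) and (\ref{iso:YX}), these bimodule isomorphisms upgrade to natural isomorphisms of functors $(X\otimes^\mathbb{L}_A-)\circ(Y\otimes^\mathbb{L}_B-)\simeq \mathrm{Id}$ and $(Y\otimes^\mathbb{L}_B-)\circ(X\otimes^\mathbb{L}_A-)\simeq \mathrm{Id}$. Therefore $Y\otimes^\mathbb{L}_B-$ is a quasi-inverse of $F$, and by the uniqueness of quasi-inverses of equivalences, $G\simeq Y\otimes^\mathbb{L}_B-$ is standard.

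The main obstacle is the first step: constructing the derived tensor product $X_2\otimes^\mathbb{L}_B X_1$ as an honest object of $\mathbf{D}(C\otimes A^{\rm op}\mbox{-Mod})$ rather than as a mere functor. This requires the existence of $K$-flat resolutions of bimodules whose restriction to one side remains $K$-flat, and is exactly the point where the flatness of $A$ and $C$ over the base ring $\mathbb{K}$ enters in an essential way. Once this technical point is in place, both assertions of the lemma reduce to applying the criterion ``(1) $\Leftrightarrow$ (3)" of the preceding proposition together with the functorial associativity of the derived tensor product.
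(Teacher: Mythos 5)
Your argument is correct and is essentially the intended one: the paper gives no proof of this lemma, simply citing \cite[Proposition~4.1]{Ric91}, and your construction of $X_2\otimes^\mathbb{L}_B X_1$ via a bimodule resolution that remains K-flat on one side, followed by associativity and the criterion ``(1) $\Leftrightarrow$ (3)'' of the preceding proposition, is exactly that classical argument transplanted to the unbounded, $\mathbb{K}$-flat setting. Two cosmetic remarks: the isomorphisms (\ref{iso:XY}) and (\ref{iso:YX}) are stated in the paper only for two algebras, so you need their evident three-algebra analogue for $A$, $B$, $C$; and at the resolution step only the $\mathbb{K}$-flatness of $A$ is actually used (flatness of $C$ would enter if you resolved $X_2$ instead), since $\mathbb{K}$-flatness of $A$ makes a K-projective resolution of $X_1$ over $B\otimes A^{\rm op}$ restrict to a K-flat complex of left $B$-modules.
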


The following result is inspired by the work \cite{Kel93}, and partially justifies the notion of canonical derived equivalence.

\begin{thm}\label{thm:stan=can}
Assume that $A$ and $B$ are flat algebras, and that $F\colon \mathbf{D}(A\mbox{-}{\rm Mod})\rightarrow \mathbf{D}(B\mbox{-}{\rm Mod})$ is a derived equivalence. Then $F$ is standard if and only if it is canonical.
\end{thm}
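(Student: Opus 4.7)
The plan is to establish the two implications separately. The direction ``canonical implies standard'' will be handled by a direct construction that exhibits any $\Psi_{(P,\phi)}$ as a derived tensor functor with an explicit complex of bimodules. The reverse direction will be reduced, via the factorization Theorem~\ref{thm:factor}, to showing that any standard derived auto-equivalence of $\mathbf{D}(A\mbox{-}{\rm Mod})$ which is simultaneously a pseudo-identity must be isomorphic to the identity functor.

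For canonical $\Rightarrow$ standard: starting from a tilting pair $(P,\phi)$ with $\Gamma=(\tau_{\leq 0}\,{\rm End}_B(P))^{\rm op}$ and the quasi-isomorphism $\pi:=\phi^{-1}\circ\pi_P\colon\Gamma\to A$, I would set
$$X\;:=\;P\otimes_\Gamma^{\mathbb L}A,$$
viewed as an object of $\mathbf{D}(B\otimes A^{\rm op}\mbox{-}{\rm Mod})$; here $A$ is regarded as a dg $\Gamma$-$A^{\rm op}$-bimodule via $\pi$, and $P$ is its natural dg $B$-$\Gamma^{\rm op}$-bimodule. The flatness of $A$ and $B$ over $\mathbb K$, together with the consequent flatness of each component of $\Gamma$ (which follows since the $P^n$'s are finitely generated projective $B$-modules), ensures that this derived tensor of bimodules is well-behaved. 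Associativity of the derived tensor product yields a natural isomorphism of functors
$$X\otimes_A^{\mathbb L}M\;\simeq\;P\otimes_\Gamma^{\mathbb L}(A\otimes_A^{\mathbb L}M)\;\simeq\;P\otimes_\Gamma^{\mathbb L}M^{(\Gamma)}\;=\;\Psi_{(P,\phi)}(M),$$
where $M^{(\Gamma)}$ denotes the restriction of $M$ to a dg $\Gamma$-module along $\pi$. Hence $\Psi_{(P,\phi)}\simeq X\otimes_A^{\mathbb L}-$; since the left-hand side is an equivalence, $X$ is automatically a two-sided tilting complex, and so $\Psi_{(P,\phi)}$ is standard.

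For standard $\Rightarrow$ canonical: given a standard equivalence $F\simeq X\otimes_A^{\mathbb L}-$, apply Theorem~\ref{thm:factor} to obtain $F\simeq F_2F_1$ with $F_1$ a pseudo-identity on $\mathbf{D}(A\mbox{-}{\rm Mod})$ and $F_2$ canonical. By the direction just established, $F_2$ is standard; therefore Lemma~\ref{lem:stan-comp} makes $F_1\simeq F_2^{-1}F$ standard as well, so $F_1\simeq Z\otimes_A^{\mathbb L}-$ for some two-sided tilting complex $Z$ of $A$-$A$-bimodules. Because $F_1$ is a pseudo-identity, its restriction to the core $A\mbox{-}{\rm Mod}$ is ${\rm Id}_{A\mbox{-}{\rm Mod}}$; in particular, $Z\otimes_A^{\mathbb L}-$ restricted to $A\mbox{-}{\rm proj}$ is isomorphic to the canonical inclusion $A\mbox{-}{\rm proj}\hookrightarrow\mathbf{D}(A\mbox{-}{\rm Mod})$. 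Lemma~\ref{lem:bimod-A} then forces $Z\simeq A$ in $\mathbf{D}(A^e\mbox{-}{\rm Mod})$, so $F_1\simeq{\rm Id}$ and $F\simeq F_2$ is canonical. The main obstacle is to promote rigorously the object-level isomorphism $X\otimes_A^{\mathbb L}M\simeq\Psi_{(P,\phi)}(M)$ in the first step to a natural isomorphism of triangle functors, which requires working with appropriate dg-resolutions of $P$ as a $B\otimes\Gamma^{\rm op}$-module so that the standard associativity isomorphism holds functorially in $M$.
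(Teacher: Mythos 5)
Your proposal is correct and follows essentially the same route as the paper: the ``canonical $\Rightarrow$ standard'' direction via the bimodule complex $X=P\otimes_\Gamma^{\mathbb L}A$ and associativity of the derived tensor product (with the naturality issue resolved exactly as in the paper, by taking a dg-projective resolution of $A$ as a dg $\Gamma$-$A$-bimodule, which is dg-flat over $A$ by the flatness of $A$ over $\mathbb K$), and the converse via Theorem~\ref{thm:factor}, Lemma~\ref{lem:stan-comp} and Lemma~\ref{lem:bimod-A} applied to the pseudo-identity factor.
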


\begin{proof}
For the ``if" part, we take a tilting complex $P\in \mathbf{K}^b(B\mbox{-proj})$ and an algebra isomorphism $\phi\colon A\rightarrow {\rm End}_{\mathbf{K}^b(B\mbox{-}{\rm proj})}(P)^{\rm op}$. Assume that $F=\Psi_{(P,\phi)}$. Consider $\Gamma=(\tau_{\leq 0} \; {\rm End}_B(P))^{\rm op}$. The flatness of $B$ implies that $\Gamma$ is  dg-flat as a complex of $\mathbb{K}$-modules.   Recall from (\ref{equ:pi-P}) the surjective quasi-isomorphism $\pi_P\colon \Gamma\rightarrow {\rm End}_{\mathbf{K}^b(B\mbox{-}{\rm proj})}(P)^{\rm op}$. We will view  $A$ as a dg $\Gamma$-$A$-bimodule via $\phi^{-1}\circ \pi_P$. Therefore, we have a complex $P\otimes_\Gamma^\mathbb{L} A$ of $B$-$A$-bimodules. Indeed, we might use a dg-projective resolution ${\bf p}(A)$ as a dg $\Gamma$-$A$-bimodule, and set $P\otimes_\Gamma^\mathbb{L} A=P\otimes_\Gamma {\bf p}(A)$. Here, we use the flatness of $A$.

For any complex $M$ of $A$-modules, we have natural isomorphisms.
\begin{align*}
F(M)=P\otimes_\Gamma^\mathbb{L} M\simeq P\otimes_\Gamma^\mathbb{L}(A\otimes_A^\mathbb{L} M) \simeq (P\otimes_\Gamma^\mathbb{L} A)\otimes^\mathbb{L}_A M
 \end{align*}
 Here, the rightmost isomorphism uses the following fact implicitly: as a complex of right $A$-modules, ${\bf p}(A)$ is dg-flat. These isomorphisms imply that $F$ is standard.

 For the ``only if" part, we assume that $F$ is standard. Theorem~\ref{thm:factor} yields a factorization $F\simeq F_2F_1$ with $F_1$ a pseudo-identity on $\mathbf{D}(A\mbox{-Mod})$ and $F_2$ canonical. By the proof above, $F_2$ is standard. We have a natural isomorphism $F_1\simeq F_2^{-1}F$, which implies that the restriction $F_2^{-1}F$ on $\Lambda\mbox{-proj}$ is isomorphic to the inclusion $\Lambda\mbox{-proj}\rightarrow \mathbf{D}(\Lambda\mbox{-proj})$.

 By Lemma~\ref{lem:stan-comp}, $F_2^{-1}F$ is standard. Applying Lemma~\ref{lem:bimod-A} to $F_2^{-1}F$, we infer that $F_2^{-1}F$ is isomorphic to the identity. Therefore, $F$ is isomorphic to $F_2$, and is canonical.
\end{proof}

\begin{rem}
 Combining  Theorem~\ref{thm:stan=can} and Lemma~\ref{lem:stan-comp}, one answers Question~\ref{ques:1} affirmatively in the flat case.
\end{rem}

In view of  Theorem~\ref{thm:stan=can}, we mention that Conjecture~\ref{conj:can} is a natural extension of the following well-known conjecture, which is a unbounded version of the open question raised in \cite[Section~3]{Ric91}.

\begin{conj}[Rickard]\label{conj:Rickard}
Any derived equivalence between  flat algebras is standard.
\end{conj}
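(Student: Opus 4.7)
The plan is to use the reductions the paper has already put in place. By Theorem~\ref{thm:stan=can}, in the flat setting the classes of standard and canonical derived equivalences coincide, so Conjecture~\ref{conj:Rickard} is equivalent to Conjecture~\ref{conj:can} restricted to flat algebras. By Corollary~\ref{cor:can=ps}, this in turn is equivalent to the assertion that every pseudo-identity $F$ on $\mathbf{D}(A\mbox{-Mod})$, for a flat algebra $A$, is isomorphic to the identity triangle functor. So I would concentrate entirely on this last, purely intrinsic statement; no bimodule theory, no two-sided tilting complexes, nothing about $B$ enters any more.

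So let $(F,\omega)$ be a pseudo-identity on $\mathbf{D}(A\mbox{-Mod})$ with respect to the canonical pre-weight structure $(\mathcal{D}_{\geq 0},\mathcal{D}_{\leq 0})$; by definition it already equals the identity on the core $A\mbox{-Mod}$, which is the essential content used in the hereditary case (Proposition~\ref{prop:hereditary+ps}). First I would record the easy inputs: $F$ is bi-continuous (being an autoequivalence of a compactly generated category), the pre-weight structure is two-sided convergent (Lemma~\ref{lem:can-pws-D}), and ${\rm Hom}(\mathcal{A},\Sigma^{-n}\mathcal{A})=0$ for $n\geq 1$. Then Theorem~\ref{thm:trivial-action} gives $F(X)\simeq X$ for every complex $X$; so the issue is not acting correctly on objects, but producing a single \emph{natural} isomorphism $\eta\colon F\to{\rm Id}$ compatible with $\omega$.

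The strategy I would try is a two-step extension, modelled on Proposition~\ref{prop:hereditary+ps} and Lemma~\ref{lem:extend}. Step one: extend the given natural isomorphism $\eta\colon F|_{\mathcal{A}}\to{\rm Id}_{\mathcal{A}}$ to a natural isomorphism on the bounded derived category $\mathbf{D}^b(\mathcal{A})$, using the filtration $\mathcal{U}_{[-n,0]}=(\Sigma^{-n}\mathcal{A})\ast\cdots\ast \mathcal{A}$ of Lemma~\ref{lem:pws}(5) and an inductive construction of $\eta_X$ through the triangles produced in Corollary~\ref{cor:ast}, propagating compatibility with $\omega$ at each step exactly as in the proof of Proposition~\ref{prop:object-b}; this should in principle follow from the bounded theory of \cite{CY}. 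Step two: extend from $\mathbf{D}^b(\mathcal{A})$ to $\mathbf{D}(\mathcal{A})$, using left and right towers (Proposition~\ref{prop:tower}) to write an arbitrary object as a homotopy (co)limit of bounded pieces, and defining $\eta_X$ on the (co)limit via the naturality of the $\eta_{X_n}$ together with the description of $1-\phi$ and $1-\psi$.

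The main obstacle is precisely this second step, and it is where I expect the argument to get stuck in general — indeed this is presumably why the conjecture has remained open since \cite{Ric91}. Unlike the hereditary case, an unbounded complex is \emph{not} a coproduct of its shifted cohomologies, so Lemma~\ref{lem:extend} does not apply. What one really needs is to show that the system $\{\eta_{X_n}\}$ along a convergent tower assembles coherently into a morphism from the homotopy (co)limit; the obstruction is controlled by a $\varprojlim{}^1$-term built from the graded pieces $\Sigma^{i+1}(A_i)\in\mathcal{U}_{[-n,0]}$, and in general one has no reason for this to vanish. A reasonable intermediate target would be to confirm the conjecture under additional finiteness or coherence hypotheses (for instance bounded global dimension, or $A$ of finite Tor-dimension over $\mathbb{K}$), in which case the convergence of the towers is effectively finite and the Mittag-Leffler condition holds; beyond that, any general attack would need a genuinely new idea for killing the obstruction class, and I would expect it to require some form of dg-enhancement argument in the spirit of \cite{Kel93,Toen} rather than the purely triangulated techniques of this paper.
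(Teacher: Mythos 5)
The statement you were asked about is not a theorem of the paper but an open conjecture (attributed to Rickard, going back to \cite{Ric91}); the paper offers no proof of it, only reductions and special cases, so there is no ``paper proof'' to compare against. Your proposal correctly recognizes this and does not claim a proof. Moreover, your chain of reductions is exactly the one the paper itself records: Theorem~\ref{thm:stan=can} identifies standard with canonical in the flat case, and Corollary~\ref{cor:can=ps} (see also Lemma~\ref{lem:stan+ps}) converts the conjecture into the purely intrinsic statement that every pseudo-identity on $\mathbf{D}(A\mbox{-Mod})$ is isomorphic to the identity functor, which is the paper's Conjecture~\ref{conj:can} in the form of \cite[Conjecture~5.11]{CY}. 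Your diagnosis of the remaining obstruction is also consistent with the paper: Theorem~\ref{thm:trivial-action} already settles the action on objects, and the genuine difficulty is assembling a single natural isomorphism $\eta\colon F\to \mathrm{Id}$ compatible with the connecting isomorphism $\omega$; the hereditary case (Proposition~\ref{prop:hereditary+ps}) works only because every object decomposes as a (co)product of shifted cohomologies, so Lemma~\ref{lem:extend} applies, and no such decomposition is available in general.

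One caveat on your proposed intermediate target: finite global dimension (or finite Tor-dimension over $\mathbb{K}$) does not obviously bring the statement within reach, because the obstruction is not the convergence of the towers --- which the canonical pre-weight structure always has by Lemma~\ref{lem:can-pws-D} --- but the coherence of the chosen isomorphisms $\theta_{X_n}$ along the tower and, more fundamentally, the fullness/faithfulness of the comparison already at the level of the bounded part; this is precisely the D-standardness problem of \cite{CY}, which remains open even for bounded derived categories of algebras of finite global dimension. So your ``Mittag-Leffler'' heuristic would at best remove one layer of the difficulty. In short: your proposal is an accurate reduction plus an honest assessment of why the conjecture is open, in line with the paper; it is not, and does not pretend to be, a proof.
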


The following result is analogous to Theorem~\ref{thm:factor}.

\begin{prop}
Assume that $A$ and $B$ are flat algebras, and that $F\colon \mathbf{D}(A\mbox{-}{\rm Mod})\rightarrow \mathbf{D}(B\mbox{-}{\rm Mod})$ is a derived equivalence. Then we have two factorizations $$F\simeq F_2F_1\simeq F'_1F'_2$$
with $F_1$ a pseudo-identity on $\mathbf{D}(A\mbox{-}{\rm Mod})$, $F'_1$ a pseudo-identity on $\mathbf{D}(B\mbox{-}{\rm Mod})$ and $F_2, F'_2$ standard. Both factorizations are unique up to isomorphism.
\end{prop}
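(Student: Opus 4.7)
The plan is to deduce both factorizations directly from the combination of Theorem~\ref{thm:factor} and Theorem~\ref{thm:stan=can}, using the symmetry afforded by passing to a quasi-inverse.

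For the first factorization $F\simeq F_2F_1$, I would simply invoke Theorem~\ref{thm:factor} to obtain $F\simeq F_2F_1$ with $F_1$ a pseudo-identity on $\mathbf{D}(A\mbox{-}\mathrm{Mod})$ and $F_2$ a canonical derived equivalence. Since $A$ and $B$ are flat, Theorem~\ref{thm:stan=can} identifies canonical and standard derived equivalences, so $F_2$ is standard. Uniqueness is inherited from Theorem~\ref{thm:factor}: any other factorization $F\simeq G_2G_1$ with $G_1$ a pseudo-identity and $G_2$ standard is in particular a factorization with $G_2$ canonical, so $G_1\simeq F_1$ and $G_2\simeq F_2$.

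For the second factorization $F\simeq F'_1F'_2$, I would apply Theorem~\ref{thm:factor} to the quasi-inverse $F^{-1}\colon \mathbf{D}(B\mbox{-}\mathrm{Mod})\to \mathbf{D}(A\mbox{-}\mathrm{Mod})$, getting $F^{-1}\simeq H_2H_1$ where $H_1$ is a pseudo-identity on $\mathbf{D}(B\mbox{-}\mathrm{Mod})$ and $H_2$ is canonical, hence standard by Theorem~\ref{thm:stan=can}. Taking quasi-inverses yields $F\simeq H_1^{-1}H_2^{-1}$. Setting $F'_1=H_1^{-1}$ and $F'_2=H_2^{-1}$, we obtain the desired factorization: $F'_1$ is a pseudo-identity on $\mathbf{D}(B\mbox{-}\mathrm{Mod})$ (the inverse of a pseudo-identity is again a pseudo-identity by the remark following Definition~\ref{defn:pseudo}), and $F'_2$ is standard by Lemma~\ref{lem:stan-comp}. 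Uniqueness follows by the same trick: given another factorization $F\simeq G'_1G'_2$ of this form, inverting gives two factorizations $F^{-1}\simeq G'^{-1}_2G'^{-1}_1 \simeq H_2H_1$ both of the shape in Theorem~\ref{thm:factor} (standard followed by pseudo-identity on $\mathbf{D}(B\mbox{-}\mathrm{Mod})$), so $G'^{-1}_1\simeq H_1$ and $G'^{-1}_2\simeq H_2$, whence $G'_1\simeq F'_1$ and $G'_2\simeq F'_2$.

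There is no real obstacle: once Theorem~\ref{thm:stan=can} is available in the flat case to identify canonical with standard, and Lemma~\ref{lem:stan-comp} guarantees that the class of standard equivalences is closed under quasi-inverse, the statement reduces cleanly to Theorem~\ref{thm:factor} applied once to $F$ and once to $F^{-1}$. The only minor point to state explicitly is the closure of pseudo-identities under quasi-inverse, which is already observed immediately after Definition~\ref{defn:pseudo}.
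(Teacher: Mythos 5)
Your proposal is correct and follows essentially the same route as the paper: the first factorization is Theorem~\ref{thm:factor} combined with Theorem~\ref{thm:stan=can}, and the second is obtained by applying that factorization to a quasi-inverse of $F$ and inverting, using that pseudo-identities and standard equivalences are closed under quasi-inverse. Your explicit treatment of uniqueness (transporting it through Theorem~\ref{thm:stan=can} so that the uniqueness clause of Theorem~\ref{thm:factor} applies) is exactly what the paper's terse proof leaves implicit.
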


\begin{proof}
    The first factorization follows from Theorems~\ref{thm:factor} and ~\ref{thm:stan=can}. One proves the second one by applying the first factorization to a quasi-inverse $F$.
\end{proof}

\begin{lem}\label{lem:stan+ps}
Let $A$ and $B$ be two flat algebras which are derived equivalent. Then the following statements hold.
\begin{enumerate}
    \item Any derived equivalence $\mathbf{D}(A\mbox{-}{\rm Mod})\rightarrow \mathbf{D}(B\mbox{-}{\rm Mod})$ is standard.
   \item Any pseudo-identity on $\mathbf{D}(A\mbox{-}{\rm Mod})$ is isomorphic to the identity functor.
   \item Any pseudo-identity on $\mathbf{D}(B\mbox{-}{\rm Mod})$ is isomorphic to the identity functor.
\end{enumerate}
\end{lem}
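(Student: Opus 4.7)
The plan is to prove the three clauses by interlocking them through the factorization machinery and the identification of standard with canonical in the flat setting. Specifically, I will deduce $(1)$ from $(2)$ via Theorem~\ref{thm:factor}, deduce $(2)$ from $(1)$ by the uniqueness clause of the same factorization applied to a well-chosen composite, and handle $(3)$ symmetrically using the right-hand factorization supplied by the Proposition immediately preceding the lemma. These implications form the cycles $(2)\Rightarrow(1)\Rightarrow(2)$ and $(1)\Rightarrow(3)\Rightarrow(1)$, so the lemma establishes the simultaneous truth of all three clauses for the fixed flat derived-equivalent pair $(A,B)$.

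For $(2)\Rightarrow(1)$, I would take an arbitrary derived equivalence $F\colon \mathbf{D}(A\mbox{-}\mathrm{Mod})\to\mathbf{D}(B\mbox{-}\mathrm{Mod})$ and apply Theorem~\ref{thm:factor} to write $F\simeq F_{2}F_{1}$ with $F_{1}$ a pseudo-identity on $\mathbf{D}(A\mbox{-}\mathrm{Mod})$ and $F_{2}$ canonical; clause $(2)$ collapses $F_{1}$ to $\mathrm{Id}$, and Theorem~\ref{thm:stan=can} upgrades canonical to standard. The implication $(3)\Rightarrow(1)$ is the mirror image, using the factorization $F\simeq F'_{1}F'_{2}$ from the Proposition preceding this lemma, with $F'_{1}$ a pseudo-identity on $\mathbf{D}(B\mbox{-}\mathrm{Mod})$ and $F'_{2}$ already standard; clause $(3)$ collapses $F'_{1}$ to $\mathrm{Id}$.

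For $(1)\Rightarrow(2)$, fix a derived equivalence $G\colon \mathbf{D}(A\mbox{-}\mathrm{Mod})\to\mathbf{D}(B\mbox{-}\mathrm{Mod})$, present by the standing hypothesis of the lemma, and let $F$ be an arbitrary pseudo-identity on $\mathbf{D}(A\mbox{-}\mathrm{Mod})$. Both $G$ and $GF$ are derived equivalences, so clause $(1)$ makes them standard, and Theorem~\ref{thm:stan=can} promotes them to canonical. The uniqueness clause of Theorem~\ref{thm:factor} then forces the two canonical-followed-by-pseudo-identity factorizations $GF\simeq G\circ F$ and $GF\simeq (GF)\circ\mathrm{Id}$ to coincide, yielding $F\simeq\mathrm{Id}$. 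The implication $(1)\Rightarrow(3)$ is dual: apply the uniqueness in the preceding Proposition to the composite $FG$, with $F$ now a pseudo-identity on $\mathbf{D}(B\mbox{-}\mathrm{Mod})$, and exploit that $G$ and $FG$ are standard by $(1)$.

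The main difficulty is recognizing that the lemma is self-reinforcing rather than providing an unconditional verification of any single clause in isolation: because pseudo-identities are autoequivalences, no single clause can be established without invoking one of the others (or, as in the sequel, an external input such as Proposition~\ref{prop:hereditary+ps}, which supplies clause $(2)$ in the hereditary case and thereby discharges the cycle to prove Proposition~\ref{prop:hereditary+stan}). Once this circular structure is embraced, the proof is a short, almost mechanical assembly of the factorization theorem, its dual, and Theorem~\ref{thm:stan=can}; no further auxiliary machinery is required.
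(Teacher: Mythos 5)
Your proof is correct and follows essentially the same route as the paper: both establish that the three clauses are mutually \emph{equivalent} (which, as you rightly note in your last paragraph, is what the lemma actually asserts and how it is used in Proposition~\ref{prop:hereditary+stan}) by combining the existence and uniqueness parts of Theorem~\ref{thm:factor} with Theorem~\ref{thm:stan=can}. The only cosmetic difference is that the paper obtains $(1)\Leftrightarrow(3)$ by noting that quasi-inverses of standard equivalences are standard and then invoking Corollary~\ref{cor:can=ps} with the roles of $A$ and $B$ swapped, whereas you route it through the two-sided factorization of the Proposition preceding the lemma; these amount to the same argument.
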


\begin{proof}
In view of Theorem~\ref{thm:stan=can}, ``(1) $\Leftrightarrow$ (2)'' follows from Corollary~\ref{cor:can=ps}. Since any quasi-inverse of a standard derived equivalence is also standard, (1) is equivalent to the following statement: (1')  any derived equivalence $\mathbf{D}(B\mbox{-}{\rm Mod})\rightarrow \mathbf{D}(A\mbox{-}{\rm Mod})$ is standard. Applying Corollary~\ref{cor:can=ps} again, we have ``(1') $\Leftrightarrow$ (3)''.  This completes the proof.
\end{proof}

We verify Conjecture~\ref{conj:Rickard} in the hereditary case; it might be viewed as a unbounded version of \cite[Theorem~1.8]{MY}; compare Theorem~\ref{thm:hereditary+can}.

\begin{prop}\label{prop:hereditary+stan}
Assume that $A$ and $B$ are flat algebras, which are derived equivalent to a left hereditary  flat algebra $H$. Then any derived equivalence $\mathbf{D}(A\mbox{-}{\rm Mod})\rightarrow \mathbf{D}(B\mbox{-}{\rm Mod})$ is standard.
\end{prop}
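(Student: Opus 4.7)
The plan is to chain together Proposition~\ref{prop:hereditary+ps} and Lemma~\ref{lem:stan+ps} twice, using the hereditary algebra $H$ as a bridge between $A$ and $B$. Since the condition ``pseudo-identities are trivial'' transports along derived equivalences between flat algebras (by Lemma~\ref{lem:stan+ps}), and we know it for $\mathbf{D}(H\mbox{-}{\rm Mod})$ because $H\mbox{-}{\rm Mod}$ is hereditary, we can deduce it for $\mathbf{D}(A\mbox{-}{\rm Mod})$, and then apply Lemma~\ref{lem:stan+ps} one more time to conclude.

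More concretely, first I would note that $H\mbox{-}{\rm Mod}$ is a hereditary abelian category, so Proposition~\ref{prop:hereditary+ps} implies that any pseudo-identity on $\mathbf{D}(H\mbox{-}{\rm Mod})$ is isomorphic to the identity functor; that is, statement (2) of Lemma~\ref{lem:stan+ps} holds for the algebra $H$. Since $H$ and $A$ are both flat and, by hypothesis, derived equivalent, Lemma~\ref{lem:stan+ps} applied to the pair $(H, A)$ (with the roles of $A$ and $B$ in that lemma played by $H$ and $A$, respectively) then yields statement (3) in that setting, namely: any pseudo-identity on $\mathbf{D}(A\mbox{-}{\rm Mod})$ is isomorphic to the identity functor.

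Next I would apply Lemma~\ref{lem:stan+ps} a second time, to the pair $(A, B)$. This is legitimate because $A$ and $B$ are flat and are derived equivalent to each other (composing a derived equivalence $\mathbf{D}(H\mbox{-}{\rm Mod})\simeq \mathbf{D}(A\mbox{-}{\rm Mod})$ with a derived equivalence $\mathbf{D}(H\mbox{-}{\rm Mod})\simeq \mathbf{D}(B\mbox{-}{\rm Mod})$ produces one). The statement (2) of the lemma has just been established, so the equivalent statement (1) holds: any derived equivalence $\mathbf{D}(A\mbox{-}{\rm Mod})\rightarrow \mathbf{D}(B\mbox{-}{\rm Mod})$ is standard.

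There is essentially no obstacle in this argument beyond recognizing that Lemma~\ref{lem:stan+ps} is exactly the transport mechanism required. The only mild subtlety is that Lemma~\ref{lem:stan+ps} is stated for an unordered pair, so one has to be careful about which algebra plays the role of the hereditary witness; this is automatic from the equivalence of (2) and (3) in that lemma.
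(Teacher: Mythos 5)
Your argument is correct and is essentially the paper's own proof: invoke Proposition~\ref{prop:hereditary+ps} to get triviality of pseudo-identities on $\mathbf{D}(H\mbox{-}{\rm Mod})$, transport this to $\mathbf{D}(A\mbox{-}{\rm Mod})$ via Lemma~\ref{lem:stan+ps} applied to the flat derived-equivalent pair $(H,A)$, and then apply the same lemma to the pair $(A,B)$ to conclude that every derived equivalence $\mathbf{D}(A\mbox{-}{\rm Mod})\rightarrow\mathbf{D}(B\mbox{-}{\rm Mod})$ is standard. Your extra remarks about which algebra plays which role in the lemma and about composing equivalences through $H$ are just explicit versions of steps the paper leaves implicit.
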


\begin{proof}
By Proposition~\ref{prop:hereditary+ps}, any pseudo-identity on $\mathbf{D}(H\mbox{-}{\rm Mod})$ is isomorphic to the identity functor. Since $A$ and $H$ are derived equivalent, Lemma~\ref{lem:stan+ps} implies that  any pseudo-identity on $\mathbf{D}(A\mbox{-}{\rm Mod})$ is also isomorphic to the identity functor. Then the required statement follows immediately from the same lemma.
\end{proof}

\vskip 5pt

\noindent {\bf Acknowledgement.}\; The author thanks Professor Yuanyang Zhou for the reference \cite{Rou}, and  thanks Xiaofa Chen and Haoyu Wang for many helpful comments. The project is supported by National Natural Science Foundation of China (No.s 12325101, 12131015 and 12161141001).

\bibliography{}

\begin{thebibliography}{999}

\bibitem{BBD}{\sc  A. Beilinson, J. Bernstein, and P. Deligne,} {\em Faisceaux
pervers}, Ast\'{e}risque {\bf 100}, Soc. Math. France, Paris, 1982.




\bibitem{Bondal} {\sc A.I. Bondal}, {\em Representations of associative algebras and coherent sheaves}, Math. USSR Izv. {\bf 34} (1990), 23--42.

\bibitem{BK} {\sc A.I. Bondal, and M.M. Kapranov}, {\em Representable functors, Serre functors, and mutations}, Math. USSR Izv. {\bf 35} (3) (1990),  519--541.

\bibitem{Bon} {\sc M.V. Bondarko}, {\em Weight structures vs. t-structures; weight filtrations, spectral sequences, and complexes (for motives and in general)}, J. K-Theory {\bf  6} (3) (2010),  387--504.

\bibitem{BN} {\sc M. B\"{o}kstedt, and A. Neeman}, {\em Homotopy limits in triangulated categories}, Compos. Math. {\bf 86} (1993), 209--234.



\bibitem{CC} {\sc X. Chen, and X.W. Chen}, {\em Liftable derived equivalences and objective categories}, Bull. London Math. Soc. {\bf 52} (2020), 816--834.



\bibitem{CLZ} {\sc X.W. Chen, Z. Lin, and Y. Zhou}, {\em The extensions of t-structures}, Ark. Mat. {\bf 61} (2023), 323--342.




\bibitem{CY} {\sc X.W. Chen, and Y. Ye}, {\em The D-standard and K-standard categories}, Adv. Math. {\bf 333} (2018), 159--193.

\bibitem{Chen-Wang} {\sc X.W. Chen, and Z. Wang}, {\em  Differential graded enhancements of singularity categories}, preprint, 2023.

\bibitem{ChenYP} {\sc Y. Chen}, {\em Derived equivalences between matrix subrings and their applications}, J. Algebra {\bf 370} (2012), 113--132.

\bibitem{CPS} {\sc E. Cline, B. Parshall, and L. Scott}, {\em Derived categories and Morita theory}, J. Algebra {\bf 104} (1986), 397--409.

\bibitem{Dri}{\sc V. Drinfeld}, {\em DG quotients of DG categories}, J. Algebra {\bf 272} (2004), 643--691.

\bibitem{Huy} {\sc  D. Huybrechts}, Fourier-Mukai Transformations in Algebraic Geometry, Oxford Math. Monogr., Clarendon Press, Oxford, 2006.


\bibitem{JK} {\sc P. Jorgensen, and K. Kato}, {\em Triangulated subcategories of extensions, stable t-structures, and triangles of recollements}, J. Pure Appl. Algebra {\bf 219} (12) (2015), 5500--5510.

\bibitem{Kel93} {\sc B. Keller}, {\em A remark on tilting theory and DG algebras}, Manuscripta Math. {\bf 79} (3-4) (1993), 247--252.

\bibitem{Kel94}{\sc B.~Keller}, {\em Deriving DG categories,} Ann. Sci. \'{E}cole Norm. Sup. (4) {\bf 27} (1994), no. 1, 63--102.

\bibitem{Kel00} {\sc B. Keller}, {\em Bimodule complexes via strong homotopy actions}, Algebr. Represent. Theor. {\bf 3} (2000), 357--376.


\bibitem{KZ} {\sc S. Koenig, and A. Zimmermann}, {\em Tilting hereditary orders}, Comm. Algebra {\bf 24} (6) (1996), 1897--1913.



\bibitem{Kra} {\sc H. Krause}, {\em Derived categories, resolutions and Brown representability}, in: Interactions between Homotopy Theory and Algebra, 101--139, Contemp. Math. {\bf 436}, Amer. Math. Soc., Providence, RI, 2007.




\bibitem{Miy} {\sc J.I. Miyachi},  Derived Categories with Applications to Representations of Algebras, Chiba Univ., 2000, available at: http://www.u-gakugei.ac.jp/$\sim$miyachi/seminar.html.

\bibitem{MY} {\sc  J.I. Miyachi, and A. Yekutieli}, {\em Derived Picard groups of finite-dimensional hereditary algebras}, Compos. Math. {\bf 129} (2001), 341--368.

\bibitem{Pau} {\sc D. Pauksztello}, {\em Compact corigid objects in triangulated categories and co-t-structures}, Cent. Eur. J. Math. {\bf 6} (2008), 25--42.


\bibitem{PS} {\sc L. Positeselski, and M. Schnuerer}, {\em Unbounded derived categories of small and big modules: Is the natural functor fully faithful?} J. Pure Appl. Algebra {\bf 225} (11) (2021), 106722.



\bibitem{Ric89} {\sc  J. Rickard}, {\em Morita theory for derived categories}, J. Lond. Math. Soc. (2) {\bf 39} (1989), 436--456.

\bibitem{Ric91} {\sc J. Rickard}, {\em Derived equivalences as derived functors}, J. Lond. Math. Soc. (2) {\bf 43} (1991), 37--48.

\bibitem{Ric96} {\sc J. Rickard}, {\em Splendid equivalences: derived categories and permutation modules},
Proc Lond. Math. Soc. {\bf 3} (2) (1996), 331--358.

\bibitem{Rou} {\sc R. Rouquier}, {\em Block theory via stable and Rickard equivalences}, in: Modular Representation Theory of Finite Groups, New York De Gruyter, 2001, 101--146.


\bibitem{Toen} {\sc  B. T\"{o}en}, {\em The homotopy theory of dg-categories and derived Morita theory}, Invent. Math. {\bf 167} (2007),  615--667.


\bibitem{Xi} {\sc C. Xi}, {\em Derived equivalences of algebras}, Bull. London Math. Soc. {\bf 50} (2018), 945--985.

\bibitem{Yek} {\sc A. Yekutieli}, {\em Dualizing complexes, Morita equivalence and the derived Picard group of a ring}, J. London Math. Soc. (2) {\bf 60} (1999),  723--746.

\bibitem{Zim} {\sc A. Zimmermann}, Representation Theory, A Homological Algebra Point of View, Springer International Publishing, Switzerland, 2014.

\end{thebibliography}

\vskip 10pt

 {\footnotesize \noindent Xiao-Wu Chen \\
 Key Laboratory of Wu Wen-Tsun Mathematics, Chinese Academy of Sciences,\\
 School of Mathematical Sciences, University of Science and Technology of China, Hefei 230026, Anhui, PR China\\
 E-mail: xwchen$\symbol{64}$mail.ustc.edu.cn}

\end{document}